\newcommand{\norm}[1]{\left\lVert #1\right\rVert}
\newcommand\bb[1]{\mathbb{#1}} 
\newcommand\q[1]{\mathcal{#1}} 
\newcommand\conj[1]{\overline{#1}}
\newcommand\RE{\operatorname{Re}}
\newcommand\IM{\operatorname{Im}}
\newcommand\MOD{\operatorname{mod}}
\newcommand\diag{\operatorname{diag}}
\newtheorem{theorem}{Theorem}
\newtheorem{lemma}[theorem]{Lemma}
\newtheorem{remark}[theorem]{Remark}
\numberwithin{theorem}{section} 
\DeclareMathAlphabet{\myds}{U}{bbold}{m}{n}
\title{Convergence properties of gradient methods for blind ptychography}
\author{Oleh Melnyk\thanks{Mathematical Imaging and Data Analysis, Helmholtz Munich, 85764 Neuherberg, Germany (\href{mailto:oleh.melnyk@helmholtz-munich.de}{oleh.melnyk@helmholtz-munich.de}).} }
\begin{document}

\maketitle
\begin{abstract}
We consider blind ptychography, an imaging technique which aims to reconstruct an object of interest from a set of its diffraction patterns, each obtained by a local illumination. As the distribution of the light within the illuminated region, called the window, is unknown, it also has to be estimated as well. For the recovery, we consider gradient and stochastic gradient descent methods for the minimization of amplitude-base squared loss. In particular, this includes extended Ptychographic Iterative Engine as a special case of stochastic gradient descent. We show that all methods converge to a critical point at a sublinear rate with a proper choice of step sizes. We also discuss possibilities for larger step sizes.

\textbf{Keywords:} blind ptychography, gradient descent, stochastic gradient descent, extended Ptychographic Iterative Engine.

\textbf{MSC Codes:}  78A46, 78M50, 47J25, 90C26.
\end{abstract}

\section{Introduction}\label{sec: introduction}

In recent years, ptychography \cite{Hoppe.1969} became a prominent technique in diffractive imaging. Instead of a single illumination of the object of interest, multiple local illuminations of the different parts of the object are performed and for each the diffraction pattern is captured by a detector placed in the far field. Consequently, the object must be reconstructed from the obtained diffraction patterns. Furthermore, it is often the case that the distribution of light called the probe or the window, is also not known and has to be estimated as well. In this case, the corresponding recovery problem is sometimes referred to as blind ptychography. 

The locality of illuminations allows to achieve better resolution and recently even reach a sub-{\AA}ngstr{\"o}m scale \cite{Chen.2021}. This led to a rise in popularity among practitioners and its numerous applications in fields such as crystallography \cite{Lazarev.2018, Chen.2021}, biology \cite{Piazza.2014, Giewekemeyer.2015} and material science \cite{Hoydalsvik.2014, Esmaeili.2015}. At the same time, an interest in ptychography sparked the development of efficient reconstruction methods. One proposed technique, Ptychographic Iterative Engine (PIE) \cite{Rodenburg.2004} and its extended version (ePIE) for blind ptychography \cite{Maiden.2009, Maiden.2017} became especially popular among practitioners. Over the years, it has been adapted for other measurement scenarios such as multislice objects \cite{Maiden.2012}, multimodal illumination \cite{Batey.2014} and tomographic ptychography \cite{Li.2016}. An apparent reason for ePIE's success is its simple explanation, implementation and computational complexity as the algorithm iteratively updated the object and the probe by processing a single diffraction at the time. However, ePIE is still not supported by a mathematical convergence analysis, which makes its success seem like a miracle. 

On the other hand, there has been a lot of progress in understanding ptychography as an inverse problem. When the window is known, it can be seen as a special case of phase retrieval with the short-time Fourier transform measurements \cite{Grohs.2020, Bendory.2017, Alaifari.2021}. In the past decade, there has been a series of studies on the uniqueness and stability of ptychographic phase retrieval both in continuous \cite{Grohs.2020, Grohs.2022, Grohs.2023} and discrete cases \cite{Bojarovska.2016, Li.2017, Bendory.2017, Bendory.2018, Alaifari.2021}, i.e., the object is a function or a vector, respectively. Most of these works to some extent depend on the Wigner distribution deconvolution \cite{Cohen.1989, Rodenburg.1992, Chapman.1996}, the representation of the ptychographic measurements as a convolution of the Wigner transform of the object convolved with the Wigner transform of the window. This procedure leads to the constructive noniterative procedure for the reconstruction of the object \cite{Chapman.1996, Iwen.2016, Iwen.2020, Perlmutter.2020, Forstner.2020, Melnyk.2023}. Besides, there are many iterative algorithms \cite{Fienup.1978, Elser.2003, Luke.2005, Candes.2013, Ghods.2018, Fatima.2022, Li.2022}, which can be applied to solve phase retrieval. The most prominent group is algorithms that pose the recovery as an optimization problem, and among them - are gradient methods. It includes famous Error Reduction \cite{Gerchberg.1972, Marchesini.2016, Melnyk.2022}, Wirtinger \cite{Candes.2015, Chen.2023} and Amplitude \cite{Wang.2018, Xu.2018} Flows and the above-mentioned PIE, which can be seen as the stochastic gradient descent \cite{Melnyk.2022b}.

In contrast, blind ptychography is much less discussed in the literature. In \cite{Fannjiang.2020b} the authors study the ambiguities resulting from regular scanning grids and search for possible cures. The uniqueness of reconstruction can be guaranteed for generic objects and windows \cite{Bendory.2020, Bendory.2022}. With few exceptions, reconstruction algorithms for blind ptychography exploit an idea of alternating minimization \cite{Hesse.2015, Chang.2019, Fannjiang.2020, Filbir.2022}. That is the object or the window is fixed, while the other unknown is being optimized and after some iterations the fixed and optimized unknowns are swapped. In this way, the recovery problem reduces to consequent phase retrieval problems, which are well-understood. In \cite{Roach.2023}, Wigner distribution deconvolution is combined with blind deconvolution. Some algorithms \cite{Maiden.2009, Maiden.2017, Odstrcil.2018, Kandel.2021} perform joint optimization of the object and the window, however, the convergence of such methods is not guaranteed. 
  
In this paper, we study gradient methods for joint optimization of the object and the window. We show that with carefully selected step sizes gradient descent and stochastic gradient descent converge to a critical point of the loss function at a sublinear rate. The second result is particularly valuable in the context of ePIE, which, just as PIE, can be seen as a stochastic gradient descent. To our knowledge, this is the first result in the literature regarding the convergence of ePIE. 

We start with preliminaries in \Cref{sec: preliminaries}. That includes a mathematical description of the measurements and the choice of the loss function in \Cref{sec: model} as well as an overview of ePIE and its connection to the stochastic gradient \Cref{sec: epie}. Then, our results are presented in \Cref{sec: results} and we also discuss the possibility of larger step sizes in \Cref{sec: larger step size}. All proofs can be found in \Cref{sec: proofs} followed by a short conclusion. 

\section{Preliminaries}\label{sec: preliminaries}

\subsection{Notation}
The set of $d$-dimensional complex vectors is denoted by $\bb C^d$ and $d_1 \times d_2$ matrices by $\bb C^{d_1 \times d_2}$. For a complex number $z$, its real, imaginary parts and absolute value are denoted by $\RE z$, $\IM z$, $|z|$, respectively. For $v \in \bb C^d$ let $\conj v$, $v^T$ and $v^*$ denote the complex conjugate, transpose and complex conjugate transpose, respectively. The entrywise (Hadamard) product is denoted by $\circ$. Notation $\norm{v}_p$, $1 \le p \le \infty$ denotes the $\ell_p$-norm of the vector $v \in \bb C^d$.

For a vector $v \in \bb C^d$, the diagonal matrix $\diag(v) \in \bb C^{d \times d}$ contains entries $\diag(v)_{j,j} = v_j$ on the main diagonal and zeros elsewhere. Let $F$ be the discrete Fourier transform matrix with entries $F_{j,k} = e^{-2\pi i j k /d}$. The shift operator $S_r,$ $r \in \bb Z$ can be treated either as circular shift $(S_r v)_j = v_{j - r \MOD d}$ or as noncircular shift $(S_r v)_j = v_{j - r}$ for $0 \le j - r \le d$ and $(S_r v)_j =0$ otherwise. The only equation, where its definition is used explicitly is \eqref{eq: bilinear norm bound} and the bound holds in both cases. The identity matrix is $I \in \bb C^{d \times d}$. For a matrix $A$, $\norm{A}$ denotes its spectral norm, $\norm{A} = \max_{\norm{v}_2 = 1} \norm{Av}_2$.

We set $\min\{a, b / 0 \} =a$ for the step sizes below, e.g., in \eqref{eq: mu max}.

For a function $f : \bb C^d \times \bb C^d \to \bb R:$ $(z,v) \mapsto f(z,v)$, the notation $\nabla f = (\nabla_z f, \nabla_v f)$ denotes its generalized Wirtinger gradient. While this superficial definition is sufficient for understanding our results below, we also provide a proper definition and selected properties in \Cref{sec: Wirtinger derivatives}. 

\subsection{Mathematical model, losses and gradients}\label{sec: model}

To describe the ptychographic measurements mathematically, let us introduce the notation. We denote the object and the window as $x \in \bb C^d$ and $w \in \bb C^d$, respectively. Each of the illuminated regions corresponds to a shift of the window $S_r w$ by some $r$ and the set of all shifts is $\q R$ with $R$ elements. The interaction between the object and shifted window is modeled by entrywise multiplication, resulting in the exit wave $x \circ S_r w$. The propagation of the exit wave to the far field corresponds to an application of the discrete Fourier transform $F$. Finally, the intensity of the captured wave is given by a square of its absolute value. The resulting measured noisy intensities are described by the following equation
\begin{equation}\label{eq: meas}
y_{r,k} = Noisy(|[F(x \circ S_r w)]_k|^2), \quad r \in \q R, \quad k=1,\ldots, d.
\end{equation}
Consequently, our goal is to recover $x$ and $w$ from the measurements $y_{r,k}$. 
We note that unique recovery is understood up to several ambiguities naturally arising from the measurements \eqref{eq: meas}. 

\begin{theorem}[{General ambiguities arising in blind ptychography \cite{Fannjiang.2020b}}] \label{thm: blind ambiguities}
Consider $x,w \in \bb C^{d}$ and the corresponding ptychographic measurements \eqref{eq: meas}. Then,  
\begin{enumerate}
\item (global phase ambiguity) for all $a, b$ such that $|a| = |b|=1$ the pair $a x, b w$ produces the same measurements \eqref{eq: meas},
\item (linear phase ambiguity) for all $\rho \in \bb R$ the pair $z,v \in \bb C^d$ with $z_k = e^{-i \rho k} x_k$ and $v_k = e^{i \rho k} w_k$. $k =1,\ldots, d$, produces the same measurements \eqref{eq: meas},
\item (scaling ambiguity) for all $\gamma \in \bb C \backslash \{0\}$ the pair $\gamma x, w / \gamma$ produces the same measurements \eqref{eq: meas}.
\end{enumerate}
\end{theorem}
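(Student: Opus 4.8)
The statement is established by direct substitution: for each of the three transformations I would show that, for every $r \in \q R$, the exit wave $x \circ S_r w$ is merely multiplied by a unimodular scalar that does \emph{not} depend on the coordinate index $k$. Since $F$ is linear and $|c\,\xi|^2 = |\xi|^2$ whenever $|c| = 1$, the noiseless intensities $|[F(x \circ S_r w)]_k|^2$ are then left unchanged, and feeding identical values into the same noise model $Noisy(\cdot)$ produces the same measurements \eqref{eq: meas}.

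I would dispatch the scaling ambiguity first, as it is the simplest: since $S_r$ is linear, $\gamma x \circ S_r(w/\gamma) = \gamma \cdot \gamma^{-1}\,(x \circ S_r w) = x \circ S_r w$ for any $\gamma \in \bb C \setminus \{0\}$, so the exit wave — hence every $y_{r,k}$ — is literally unchanged. The global phase ambiguity is equally short: for $|a| = |b| = 1$, linearity of $S_r$ gives $a x \circ S_r(b w) = ab\,(x \circ S_r w)$, and $|ab| = 1$, so $|[F(ax \circ S_r(bw))]_k|^2 = |ab|^2\, |[F(x \circ S_r w)]_k|^2 = |[F(x \circ S_r w)]_k|^2$.

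The linear phase ambiguity is the only item needing a small computation. With $z_k = e^{-i\rho k} x_k$ and $v_k = e^{i\rho k} w_k$, one has $(S_r v)_k = v_{k-r} = e^{i\rho(k-r)}(S_r w)_k$, and therefore
\[
(z \circ S_r v)_k = e^{-i\rho k} x_k \cdot e^{i\rho(k-r)} (S_r w)_k = e^{-i\rho r}\,(x \circ S_r w)_k,
\]
i.e.\ $z \circ S_r v = e^{-i\rho r}(x \circ S_r w)$ with $|e^{-i\rho r}| = 1$, so applying $F$ and $|\cdot|^2$ removes this factor. The only point requiring care is the interplay between the phase ramp and the shift: the identity $v_{k-r} = e^{i\rho(k-r)}(S_r w)_k$ is immediate for the noncircular shift (the zero entries of $S_r w$ stay zero) and holds there for every $\rho \in \bb R$, whereas for the circular shift a wrap-around moves the index by a multiple of $d$, so that $e^{i\rho(k-r)}$ and $e^{i\rho((k-r)\MOD d)}$ coincide under the mild restriction $\rho d \in 2\pi\bb Z$; in either reading, the essential feature is that the emerging scalar $e^{-i\rho r}$ is constant in $k$ and hence cancels in the modulus. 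I do not expect a genuine obstacle here — the whole argument is a verification — beyond bookkeeping of the shift convention in this last part.
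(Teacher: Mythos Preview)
Your direct-verification argument is correct and is the standard way to establish these ambiguities. The paper itself does not prove \Cref{thm: blind ambiguities}; it merely cites it from \cite{Fannjiang.2020b}, so there is nothing to compare against beyond noting that your argument is the expected one. Your observation about the circular versus noncircular shift in item~2 is a genuine subtlety (and one the paper's statement glosses over); your handling of it is accurate.
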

Furthermore, other ambiguities may arise depending on the set of shifts $\q R$, e.g., when $\q R$ forms a regular grid \cite{Bendory.2020}.  

In order to pose the recovery problem as an optimization problem we consider the loss 
\begin{equation}\label{eq: loss}
\q L_\varepsilon(z,v) = \sum_{r \in \q R} \sum_{k = 1}^d \left[ \sqrt{ | [ F (z \circ S_r v) ]_k |^2 + \varepsilon} - \sqrt{y_{r,k} + \varepsilon} \right]^2, \quad \varepsilon \ge 0.
\end{equation}
If the measurements are noiseless, we get $\q L_\varepsilon(x,w) = 0$ and $\q L_\varepsilon(z,v) \ge 0$, so that the pair $x,w$ is a global minimum of $\q L_\varepsilon$. However, as the loss function \eqref{eq: loss} is nonconvex, its global optimization is a nontrivial task. Moreover, we would only consider a convergence to a critical point $\nabla \q L_{\varepsilon} = 0$, with the gradients of \eqref{eq: loss} given by
\begin{align*}
\nabla_z \q L_\varepsilon (z,v) & :=  \sum_{r \in \q R} \diag( S_{r} \conj v) F^* \left[ I -  \diag\left( \left\{ \frac{ \sqrt{y_{r,k} + \varepsilon}}{ \sqrt{| [F (z \circ S_r v) ]_{k} |^2 + \varepsilon}} \right\}_{k=1}^{d} \right) \right] F (z \circ S_r v), \\
\nabla_v  \q L_\varepsilon (z,v) & :=  \sum_{r \in \q R} \diag(\conj z) S_{-r} F^* \left[ I -  \diag\left( \left\{ \frac{ \sqrt{y_{r,k} + \varepsilon}}{ \sqrt{| [F (z \circ S_r v) ]_{k} |^2 + \varepsilon}} \right\}_{k=1}^{d} \right) \right] F (z \circ S_r v),
\end{align*}
where in the case $\varepsilon = 0$ and $F (z \circ S_r v)_k = 0$ the fraction $F (z \circ S_r v)/|F (z \circ S_r v)|$ is set to zero.

Intuitively, the squared error between the simulated and measured intensities would be the first choice, i.e., without additional square roots as in \eqref{eq: loss}. Our choice is motivated by the three following facts. The function \eqref{eq: loss} with $\varepsilon = 0$ is a second-order Taylor approximation of the maximum likelihood function for Poisson noise \cite{Thibault.2012, Romer.2022}. 
The minimization of $\q L_0$ is also equivalent to finding a barycenter of positive-semidefinite matrices with respect to the Bures-Wasserstein distance \cite{Maunu.2022}. Lastly, the partial gradients $\nabla_z \q L_\varepsilon$ and $\nabla_v \q L_\varepsilon$ admit Lipschitz-like properties, which is essential for the analysis of the convergence of the gradient methods \cite{Hesse.2015, Xu.2018, Melnyk.2022, Melnyk.2022b}. A nonzero smoothing parameter $\varepsilon$ makes the gradient Lipschitz continuous, which is mostly needed as a technical step in the analysis \cite{Xu.2018, Chang.2019, Melnyk.2022}, but it also has an impact on the convergence to a critical point for stochastic gradient descent \cite{Melnyk.2022b}.

Note the loss function \eqref{eq: loss} is prone to the ambiguities described in \Cref{thm: blind ambiguities}. The first two ambiguities only affect the phases of the object and the window and the scaling ambiguity also affects their norms. As the gradients of $\q L_\varepsilon$ are not scale invariant, it is possible to make $\nabla_z \q L_\varepsilon$ arbitrarily large and $\nabla_v \q L_\varepsilon$ proportionally small and vice versa, while the loss function remains constant. This later complicates the analysis of the convergence. A possible cure to the scaling problem is to impose a constraint on the norm of $z$ and $v$. This can be done explicitly by adding constraints to the optimization problem, for instance, see \cite{Hesse.2015} or Section 4.2.4 of \cite{Melnyk.2023}. An alternative is to impose the norm constraint implicitly by including the Tikhonov regularization,      
\begin{equation}\label{eq: loss regularized}
\q J(z,v)  := \q J(z,v; \varepsilon, \alpha_T, \beta_T) = \q L_\varepsilon(z,v) + \alpha_T \norm{z}_2^2 + \beta_T \norm{v}_2^2,
\end{equation}
for some $\alpha_T, \beta_T \ge 0$ with gradients
\[
\nabla_z \q J(z,v) = \nabla_z \q L_\varepsilon(z,v) + \alpha_T z, \quad \nabla_v \q J(z,v) = \nabla_v \q L_\varepsilon + \beta_T v.
\] 
It was shown that the norms of the iterates of Alternating Amplitude Flow for $\q J$ \cite{Filbir.2022} are bounded and we will use a similar argument in our proofs later.
 
For the minimization of \eqref{eq: loss regularized}, we employ the gradient descent scheme
\begin{equation}\label{eq: gradient descent}
z^{t+1} = z^t - \mu_t \nabla_z \q J(z^t,v^t),
\quad \quad
v^{t+1} = v^t - \nu_t \nabla_v \q J(z^t,v^t),
\end{equation}
with initial guesses $z^0,v^0$ and step sizes $\mu_t,\nu_t$. While it is simple and not the most efficient optimization method, understanding its convergence is the first step toward the analysis of more involved methods such as \cite{Kandel.2021} or ePIE.  

\subsection{Extended Ptychographic Iterative Engine}\label{sec: epie}

Extended Ptychographic Iterative Engine (ePIE) is an iterative algorithm which extends the idea of Error Reduction \cite{Gerchberg.1972} for blind ptychography. Starting with initial guesses $z^0, v^0$, to construct the next iterates for an iteration it selects a single region $r^t \in \q R$ and computes the exit wave $z^t \circ S_{r^t} v^t$. Then, a single Error Reduction iteration is performed, which gives a corrected exit wave. Finally, new object $z^{t+1}$ and the window $v^{t+1}$ are decoupled from the two exit waves. The detailed iteration is summarized in \Cref{alg: ePIE iteration}.

\medskip
\begin{algorithm}[H]
\caption{ePIE iteration, version of \cite{Maiden.2009}}
\label{alg: ePIE iteration}
\SetAlgoLined
\SetKwInOut{Input}{Input}
\Input{Measurements $y$, previous iterates iterates $z^t,v^t \in \bb C^d$, step sizes $\alpha_t,\beta_t > 0$.}
1. Select a shift position $r^t \in \q R$. \\
2. Construct an exit wave $\psi =  z^t \circ S_{r^t} v^t$.\\
3. Compute its Fourier transform $\Psi = F \psi$.\\
4. Correct the magnitudes of $\Psi$ as $\Psi'_k = \sqrt{ y_{r^t, k} } \Psi_k /|\Psi_k|$.\\
5. Find an exit wave $\psi'$ corresponding to $\Psi'$ via $\psi' = F^{-1} \Psi' $ \\
6. Return  
\begin{align*}
z^{t+1} = z^t + \frac{\alpha_t \diag(S_{r^t} \conj{v}^t)}{\norm{v^t}_\infty^2}  [\psi' - \psi], & \quad 
v^{t+1} = v^t + \frac{\beta_t S_{-r^t} \diag(\conj{z}^t)}{\norm{z^t}_\infty^2}   [\psi' - \psi].
\end{align*}
\\
\end{algorithm}
\medskip   
Parameters $\alpha_t$ and $\beta_t$ are often set to a constant value for all iterations, e.g., $\alpha_t = \beta_t = 0.05$. There are two common ways to choose $r^t$, either as a neighboring position to $r^{t-1}$ or it loops through the set $\q R$, which is randomly shuffled after each loop. 

As it follows from \Cref{alg: ePIE iteration}, an iteration of ePIE only performs two Fourier transforms and accesses one diffraction pattern $\{ y_{r^t, k} \}_{k=1}^d$, which makes it extremely efficient.

There are several interpretations of ePIE in the literature \cite{Thibault.2013, Maiden.2017} and the one in  \cite[Section 4.1.2]{Hesse.2015}, \cite[Section F.1]{Kandel.2021} or \cite[Section 4.3]{Melnyk.2023} is particularly useful and links ePIE to the function $\q L_\varepsilon$. For this, let us consider supplementary error functions
\[
\q L_{r,\varepsilon} := \sum_{k = 1}^d \left[ \sqrt{ | [ F (z \circ S_r v) ]_k |^2 + \varepsilon} - \sqrt{y_{r,k} + \varepsilon} \right]^2, \quad r \in \q R,
\]
corresponding to the squared error for $r$-th illuminated region. Then, the ePIE iteration can be interpreted as a gradient step 
\begin{equation}\label{eq: ePIE as selection of objective}
z^{t+1} = z^{t} - \frac{\alpha_t}{d \norm{v^t}_\infty^2} \nabla_{z} \q L_{r^t,0}(z^t,v^t), \quad 
v^{t+1} = v^{t} - \frac{\beta_t}{d \norm{z^t}_\infty^2} \nabla_{v} \q L_{r^t,0}(z^t,v^t).
\end{equation}
While representation \eqref{eq: ePIE as selection of objective} connects ePIE to $\q L_0$, in this form is not convenient to analyze ePIE as a gradient method for $\q L_0$. 

Let us assume that that each $r^t$ is sampled independently at random from some distribution $p \in (0,1)^R$ over $\q R$. Then, we are able to reinterpret ePIE as a stochastic gradient descent for $\q L_{0}$. First, let us formally introduce the stochastic gradient. 

Consider a function $f: \bb C^{d} \times \bb C^d \to \bb R$ with decomposition $f = \sum_{r \in \q R} f_r$ and generalized Wirtinger gradients $\nabla f_r$. Fix $K \in \bb N$ to be the number of summands to be sampled and let $p \in (0,1)^R$ be a distribution over $\q R$. Then, the stochastic gradient $g[f](z,v)$ is constructed as
\begin{equation}\label{eq: stochastic gradient}
g[f] (z,v) := \frac{1}{K} \sum_{k=1}^K \frac{1}{p_{r^k} } \nabla f_{r^k}(z,v)
\quad \text{with} \quad
g[f](z,v) = 
\begin{bmatrix}
g[f]_z (z,v) \\
g[f]_v (z,v)
\end{bmatrix},
\end{equation}
where indices $r^{1}, \ldots, r^{K}$ are sampled independently with replacement from $p$. When working with iterations, we would write $g^t$ with independent indices $r^{t,1}, \ldots, r^{t,K}$ to indicate the dependence on $t$. Note that for $K = 1$ the equation \eqref{eq: stochastic gradient} simplifies to
\[
g[f](z,v) = \frac{1}{p_{r}} \nabla f_{r}(z,v).
\]
for a sampled index $r$. Consequently, we can rewrite \eqref{eq: ePIE as selection of objective} as 
\begin{equation}\label{eq: ePIE as stochastic gradient}
z^{t+1} = z^{t} - \frac{\alpha_t p_{r^t}}{d \norm{v^t}_\infty^2} g_{z}^t[\q L_0](z^t,v^t), \quad 
v^{t+1} = v^{t} - \frac{\beta_t p_{r^t}}{d \norm{z^t}_\infty^2} g_{v}^t[\q L_0](z^t,v^t).
\end{equation}
Thus, ePIE can be understood and analyzed as a stochastic gradient descent for $\q L_0$. This motivated us to study the convergence properties of stochastic gradient descent for $\q J$,
\begin{equation}\label{eq: stochastic gradient descent}
z^{t+1} = z^t - \mu_t g_z^t[\q J](z^t,v^t),
\quad \quad
v^{t+1} = v^t - \nu_t g_v^t[\q J](z^t,v^t),
\end{equation} 
were the decomposition of $\q J = \sum_{r \in \q R} \q J_{r}$ is given by
\begin{equation}\label{eq: Jr definition}
\q J_r(z,v;\varepsilon,\alpha_T, \beta_T) := \q L_{r,\varepsilon}(z,v)  + \alpha_T p_r \norm{z}_2^2 + \beta_T p_r \norm{v}_2^2.
\end{equation}
From now on, we would use a shorter notation $g$ instead of $g[\q J]$.

As the stochastic gradients are random, their convergence can be only guaranteed in probabilistic terms. Let us briefly introduce a few notions from probability theory, while more details on the topic can be found in \cite{Athreya.2006}. Consider the probability space $(\q S, \q F, \bb P)$ induced by the random indices $\{r^{t,k} \}_{t = 0, k= 1}^{\infty, K}$. Let $\q F_0$ be the trivial $\sigma$-algebra and for $t \ge 1$ let $\q F_t$ be the $\sigma$-algebra generated by $\{r^{s,k}\}_{s = 0, k=1}^{t-1,K}$. By construction, $\q F_0 \subseteq \q F_1 \subseteq \ldots \q F$ so that $\{\q F_t \}_{t \ge 0}$ is a filtration. We say that a sequence of random variables $\{ X_t \}_{t \ge 0}$ is adapted to filtration $\{\q F_t \}_{t \ge 0}$ if for each $t$ $X_t$ is $\q F_t$ measurable. Essentially, all random variables which are $\q F_t$-measurable can be expressed in terms of $\{r^{s,k}\}_{s = 0, k=1}^{t-1,K}$. In particular, $z^t, v^t$ are $\q F_t$-measurable and, consequently, $\q J(z^t, v^t)$ and $\norm{\nabla \q J(z^t, v^t)}_2$ are adapted to the filtration $\{\q F_t \}_{t \ge 0}$. Note that $\{ g(z^t,v^t) \}_{t \ge 0}$ is not adapted as it also depends on $\{r^{t,k}\}_{k=1}^{K}$. We say that a random event is almost surely (a.s.) true if its probability is one. We would denote expectation by $\bb E$ and conditional expectation with respect to $\sigma$-algebra $\q F_t$ by $\bb E[\, \cdot \,| \, \q F_t \, ]$. With this, we are ready to state our results.

\section{Our contribution}\label{sec: results}
Let us start first with the analysis of the gradient descent \eqref{eq: gradient descent}. It was shown that the suitable minimal requirement for the convergence of gradient methods for nonconvex optimization is the so-called descent lemma \cite[Assumption 3]{Khaled.2023}, which controls the change in the loss function in terms of the change in the argument. In fact, it can be seen as a weaker version of Lipschitz continuity of the gradient. For $\q J$ we are able to establish the following version of the descent lemma  
\begin{lemma}[Descent lemma for $\q J$]\label{col: descent lemma J}
Let $\varepsilon, \alpha_T, \beta_T \ge 0$. For all $z,v \in \bb C^d$ and $u,h \in \bb C^d$ we have
\begin{align*}
\q J(z+u, v + h) 
& \le \q J(z, v) 
+ 2 \RE( u^* \nabla_z \q J(z,v) ) 
+ 2 \RE(h^*\nabla_v \q J(z,v)) \\
& \quad + \norm{u}_2^2  \left[  \alpha_T + d \left(\tfrac{10}{3} \norm{v}_2^2 + \tfrac{5}{4} \norm{h}_2^2 + \tfrac{2}{3} \norm{z}_2^2  +  \tfrac{1}{4}\norm{u}_2^2 + \norm{y/d}_1^{1/2} \right) \right] \\
& \quad + \norm{h}_2^2  \left[ \beta_T + d \left( \tfrac{10}{3} \norm{z}_2^2 + \tfrac{5}{4} \norm{u}_2^2 + \tfrac{2}{3} \norm{v}_2^2 +  \tfrac{1}{4}\norm{h}_2^2 + \norm{y/d}_1^{1/2} \right) \right]. 
\end{align*}
\end{lemma}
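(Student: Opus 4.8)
The plan is to reduce the estimate to a coordinatewise one-dimensional inequality and then dispose of the resulting bilinear and quadratic remainders with elementary norm bounds. Since $\q J = \q L_\varepsilon + \alpha_T\norm{\cdot}_2^2 + \beta_T\norm{\cdot}_2^2$ and the two Tikhonov terms expand exactly, e.g.\ $\alpha_T\norm{z+u}_2^2 = \alpha_T\norm{z}_2^2 + 2\RE(u^*\alpha_T z) + \alpha_T\norm{u}_2^2$, it suffices to prove the bound for $\q L_\varepsilon$ with only the $d(\cdots)$ parts of the brackets; the $\alpha_T z,\beta_T v$ inside $\nabla\q J$ and the $\alpha_T\norm{u}_2^2,\beta_T\norm{h}_2^2$ then come for free. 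The workhorse is the scalar inequality: for all $c,\delta\in\bb C$ and $b,\varepsilon\ge 0$, the function $f(c)=(\sqrt{|c|^2+\varepsilon}-b)^2$, whose generalized Wirtinger derivative is $\nabla f(c)=c(1-b/\sqrt{|c|^2+\varepsilon})$, satisfies $f(c+\delta)\le f(c)+2\RE(\conj{\delta}\,\nabla f(c))+|\delta|^2$. This follows by expanding $|c+\delta|^2$ exactly and bounding the concave term $-2b\sqrt{|c+\delta|^2+\varepsilon}$ from above through convexity of $c\mapsto\sqrt{|c|^2+\varepsilon}$ (with subgradient $0$ at the origin when $\varepsilon=0$).

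I would apply this with $c=a_{r,k}:=[F(z\circ S_r v)]_k$ (and $a_r:=F(z\circ S_r v)$), $b=\sqrt{y_{r,k}+\varepsilon}$, and $\delta$ the $k$-th entry of $F((z+u)\circ S_r(v+h))-F(z\circ S_r v)$, then sum over $k$ and $r$. The bilinearity identity $F((z+u)\circ S_r(v+h))-F(z\circ S_r v)=F(u\circ S_r v)+F(z\circ S_r h)+F(u\circ S_r h)$ shows that the two summands linear in $(u,h)$ contribute, inside the first-order term $2\RE(\delta_r^*D_r a_r)$ with $D_r$ the diagonal matrix whose entries are $1-\sqrt{y_{r,k}+\varepsilon}/\sqrt{|a_{r,k}|^2+\varepsilon}$, exactly $2\RE(u^*\nabla_z\q L_\varepsilon(z,v))+2\RE(h^*\nabla_v\q L_\varepsilon(z,v))$ by the chain rule, while $F(u\circ S_r h)$ leaves the residual $E_1:=2\sum_{r}\RE((F(u\circ S_r h))^*D_r a_r)$. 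So the remainder to control is $E_1$ together with $E_2:=\sum_{r}\norm{F(u\circ S_r v)+F(z\circ S_r h)+F(u\circ S_r h)}_2^2$ (from the $|\delta|^2$ terms), the goal being $E_1+E_2\le$ the two $d(\cdots)$ brackets.

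For this I would use three ingredients. (i)~$\norm{F}=\sqrt d$, hence $\norm{F(a\circ S_r b)}_2=\sqrt d\,\norm{a\circ S_r b}_2$. (ii)~The ``energy'' bound $\sum_{r\in\q R}|(S_r b)_k|^2\le\norm{b}_2^2$ for every $k$ (injectivity of $r\mapsto k-r$, the only place the shift convention enters, cf.\ \eqref{eq: bilinear norm bound}), which gives the crucial $R$-free estimate $\sum_{r}\norm{a\circ S_r b}_2^2\le\norm{a}_2^2\norm{b}_2^2$ and, with Cauchy--Schwarz in $r$, analogous control of the cross terms. (iii)~The pointwise bound
$$|(D_r a_r)_k|=\frac{|a_{r,k}|}{\sqrt{|a_{r,k}|^2+\varepsilon}}\,\bigl|\sqrt{|a_{r,k}|^2+\varepsilon}-\sqrt{y_{r,k}+\varepsilon}\bigr|\le|a_{r,k}|+\sqrt{y_{r,k}},$$
uniform in $\varepsilon\ge 0$ (a short argument: the subtracted quantity $|a_{r,k}|\sqrt{y_{r,k}+\varepsilon}/\sqrt{|a_{r,k}|^2+\varepsilon}$ is monotone in $\varepsilon$ and lies between $\sqrt{y_{r,k}}$ and $|a_{r,k}|$). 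By (iii), $E_1\le 2\sum_r\norm{F(u\circ S_r h)}_2\bigl(\norm{a_r}_2+(\sum_k y_{r,k})^{1/2}\bigr)$, and the $(\sum_k y_{r,k})^{1/2}$-part produces, via Cauchy--Schwarz in $r$ and $2\sqrt{ab}\le a+b$, precisely the $d\norm{y/d}_1^{1/2}(\norm{u}_2^2+\norm{h}_2^2)$ terms; expanding $E_2$ into its six summands and applying (i)--(iii) then reduces everything to the elementary inequality
\begin{align*}
&4\norm{u}_2\norm{h}_2\norm{v}_2\norm{z}_2+2\norm{u}_2^2\norm{h}_2\norm{v}_2+2\norm{u}_2\norm{h}_2^2\norm{z}_2\\
&\qquad\le\tfrac73\norm{u}_2^2\norm{v}_2^2+\tfrac73\norm{h}_2^2\norm{z}_2^2+\tfrac32\norm{u}_2^2\norm{h}_2^2+\tfrac23\norm{u}_2^2\norm{z}_2^2+\tfrac23\norm{h}_2^2\norm{v}_2^2+\tfrac14\norm{u}_2^4+\tfrac14\norm{h}_2^4.
\end{align*}
This is a weighted AM--GM identity: splitting $4\norm{u}_2\norm{h}_2\norm{v}_2\norm{z}_2=4\theta(\norm{u}_2\norm{v}_2)(\norm{h}_2\norm{z}_2)+4(1-\theta)(\norm{u}_2\norm{z}_2)(\norm{h}_2\norm{v}_2)$ and $2\norm{u}_2^2\norm{h}_2\norm{v}_2=2\phi\norm{u}_2^2(\norm{h}_2\norm{v}_2)+2(1-\phi)(\norm{u}_2\norm{h}_2)(\norm{u}_2\norm{v}_2)$ (and symmetrically for $2\norm{u}_2\norm{h}_2^2\norm{z}_2$), and using $2xy\le x^2+y^2$ everywhere, the choices $\theta=\tfrac{19}{24}$ and $\phi=\tfrac14$ make every coefficient meet its target with equality.

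The genuinely delicate points are keeping $R=|\q R|$ out of all the estimates --- which is why ingredient (ii) must be used in its energy form rather than the crude $\sum_r\norm{S_r b}_2^2\le R\norm{b}_2^2$ --- and establishing (iii) uniformly in $\varepsilon$, so that only $\norm{y/d}_1$, not $\varepsilon$, survives; the rest is the bookkeeping of the Young weights, and the constants $\tfrac{10}{3},\tfrac54,\tfrac23,\tfrac14$ are exactly those for which the concluding AM--GM step closes.
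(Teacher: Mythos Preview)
Your argument is correct and follows a genuinely different route from the paper. The paper proceeds via the second-order Wirtinger Taylor expansion: it computes all eight blocks of the Hessian $\nabla^2\q L_\varepsilon$ explicitly (Lemma~4.1), bounds the resulting quadratic form, and then integrates that bound along the segment from $(z,v)$ to $(z+u,v+h)$, which produces the integrals $\int_0^1(1-t)\norm{v+th}_2^2\,dt=\tfrac23\norm{v}_2^2+\tfrac14\norm{h}_2^2$ etc.\ that give the stated constants; the case $\varepsilon=0$ is obtained only at the end by a limiting argument. You bypass the Hessian entirely by using the scalar convexity bound $f(c+\delta)\le f(c)+2\RE(\conj\delta\,\nabla f(c))+|\delta|^2$ once per measurement, which is valid uniformly in $\varepsilon\ge0$ thanks to the observation that $c\mapsto\sqrt{|c|^2+\varepsilon}=\norm{(c,\sqrt\varepsilon)}_2$ is convex. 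The bilinear leftover then lands exactly on the elementary AM--GM identity you state, and your weights $\theta=\tfrac{19}{24}$, $\phi=\tfrac14$ do close it with equality in every coefficient. What your approach buys is a considerably shorter and more transparent argument that never touches second derivatives and needs no $\varepsilon\to0$ limit; what the paper's approach buys is that the same Hessian computation simultaneously yields a \emph{lower} bound on the quadratic form (the second inequality in Lemma~4.1), which is later reused to prove local Lipschitz continuity of $\nabla\q J$ (Lemma~4.11) for the stochastic-gradient analysis --- a byproduct your pointwise upper bound does not provide.
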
   
Unlike the standard decent lemma \cite[Lemma 5.7]{Beck.2017}, we observe the fourth-order terms appearing in \Cref{col: descent lemma J}. This can be explained by viewing $\q J$ as an ``almost'' fourth-order polynomial. That is $z \circ S_r v$ is a quadratic polynomial and $\sqrt{ | [ F (z \circ S_r v) ]_k |^2 + \varepsilon}$ is an ``almost'' quadratic polynomial, which gives total quartic dependency on the arguments.

For further convenience, let us define
\begin{equation}\label{eq: B}
B(z,v) := 3 d \left[ \tfrac{10}{3} \norm{z}_2^2  + \tfrac{10}{3} \norm{v}_2^2 + \norm{y/d}_1^{1/2} \right] + 3 \max\{\alpha_T,\beta_T\}.
\end{equation}
and
\begin{equation}\label{eq: J inf}
\q J^{\inf} := \inf_{z,v \in \bb C^d} \q J(z,v).
\end{equation}

With \Cref{col: descent lemma J}, we are able to choose the step sizes $\mu_t,\nu_t$ such that the gradient descent provably converges.

\begin{theorem}[Convergence of the gradient descent for blind ptychography]\label{thm: convergence gradient}
Let $\varepsilon, \alpha_T, \beta_T \ge 0$. Consider a sequences $\{z^t\}_{t \ge 0}, \{v^t\}_{t \ge 0}$ generated by \eqref{eq: gradient descent} with arbitrary starting points $z^0, v^0 \in \bb C^d$ and the step sizes  
\begin{align} 
\mu_t, \nu_t & \le \min \left\{ B^{-1}(z^t,v^t),  (\tfrac{15}{4} d)^{-1/3} \norm{ \nabla_z \q J(z^t,v^t)}_2^{-2/3},  (\tfrac{15}{4} d)^{-1/3} \norm{\nabla_v \q J(z^t,v^t)}_2^{-2/3}  \right\},  \label{eq: step size gradient} 
\end{align}
Then, we have
\[ 
\q J(z^{t+1}, v^{t+1}) 
\le \q J(z^t, v^t) 
- \mu_t \norm{ \nabla_z \q J(z^t,v^t) }_2^2 
- \nu_t \norm{ \nabla_v \q J(z^t,v^t)}_2^2, \quad t \ge 0, 
\]
If $\alpha_T, \beta_T >0$ and, additionally, $\mu_t$ and $\nu_t$ are precisely equal to the minimums in \eqref{eq: step size gradient}, then
\[
\norm{ \nabla \q J(z^t,v^t) }_2 \to 0 \text{ as } t \to \infty 
\]
and
\begin{align*}
\min_{t=0,\ldots, T-1}  \norm{ \nabla \q J(z^t,v^t) }_2^2
\le \max \left\{ C_1 T^{-1} [\q J(z^0, v^0) -  \q J^{\inf}], \left( C_1 T^{-1} [\q J(z^0, v^0) -  \q J^{\inf}] \right)^{3/2} \right\},
\end{align*}
where $C_1=C_1(z^0, v^0, d, y, \alpha_T, \beta_T) $ is given by
\begin{equation}\label{eq: c gradient}
C_1 := \max \left\{ d \left[ 20 (\alpha_T^{-1} + \beta_T^{-1}) \q J(z^0, v^0) + 6 \norm{y/d}_1^{1/2}\right]  + 2\max\{\alpha_T,\beta_T\} , (15 d)^{1/3} \right\}.
\end{equation}
\end{theorem}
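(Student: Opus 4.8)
The plan is to derive every assertion from the descent lemma (\Cref{col: descent lemma J}) evaluated along the trajectory. For the first (deterministic) conclusion I would apply \Cref{col: descent lemma J} at $(z,v)=(z^t,v^t)$ with $u=-\mu_t\nabla_z\q J(z^t,v^t)$ and $h=-\nu_t\nabla_v\q J(z^t,v^t)$, so that $2\RE(u^*\nabla_z\q J(z^t,v^t))=-2\mu_t\norm{\nabla_z\q J(z^t,v^t)}_2^2$, $\norm{u}_2^2=\mu_t^2\norm{\nabla_z\q J(z^t,v^t)}_2^2$, and symmetrically for $h$. The claimed inequality then follows once I show that $\mu_t$ times the bracket multiplying $\norm{u}_2^2$ in \Cref{col: descent lemma J} is at most $1$ (and symmetrically for $\nu_t$ and the bracket multiplying $\norm{h}_2^2$), since then each quadratic-in-stepsize term is absorbed by one of the two units of the corresponding linear term $-2\mu_t\norm{\nabla_z\q J(z^t,v^t)}_2^2$. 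I would split the first bracket into: (i) the gradient-free part $\alpha_T+d(\tfrac{10}{3}\norm{v^t}_2^2+\tfrac23\norm{z^t}_2^2+\norm{y/d}_1^{1/2})$, which by \eqref{eq: B} is at most $\tfrac13 B(z^t,v^t)$ and so contributes $\le\tfrac13$ after multiplying by $\mu_t\le B^{-1}(z^t,v^t)$; (ii) the term $\tfrac14 d\,\mu_t^2\norm{\nabla_z\q J(z^t,v^t)}_2^2$, for which $\mu_t\le(\tfrac{15}{4}d)^{-1/3}\norm{\nabla_z\q J(z^t,v^t)}_2^{-2/3}$ gives $\mu_t\cdot\tfrac14 d\mu_t^2\norm{\nabla_z\q J(z^t,v^t)}_2^2=\tfrac14 d\mu_t^3\norm{\nabla_z\q J(z^t,v^t)}_2^2\le\tfrac1{15}$; and (iii) the cross term $\tfrac54 d\,\nu_t^2\norm{\nabla_v\q J(z^t,v^t)}_2^2$, for which $\mu_t,\nu_t\le(\tfrac{15}{4}d)^{-1/3}\norm{\nabla_v\q J(z^t,v^t)}_2^{-2/3}$ gives $\mu_t\cdot\tfrac54 d\nu_t^2\norm{\nabla_v\q J(z^t,v^t)}_2^2\le\tfrac13$. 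The total is $\tfrac{11}{15}<1$, so the descent inequality holds with room to spare.

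Next I would extract a priori bounds. Summing the descent inequality shows $t\mapsto\q J(z^t,v^t)$ is nonincreasing, hence $\q J(z^t,v^t)\le\q J(z^0,v^0)$ for all $t$. Since $\q L_\varepsilon\ge 0$, we have $\q J(z,v)\ge\alpha_T\norm{z}_2^2+\beta_T\norm{v}_2^2$, so when $\alpha_T,\beta_T>0$ this gives $\norm{z^t}_2^2\le\alpha_T^{-1}\q J(z^0,v^0)$ and $\norm{v^t}_2^2\le\beta_T^{-1}\q J(z^0,v^0)$; substituting these into \eqref{eq: B} bounds $B(z^t,v^t)$ above by an explicit initial-data constant which one checks is at most the $C_1$ of \eqref{eq: c gradient}. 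This is precisely the point where $\alpha_T,\beta_T>0$ is needed.

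Then I would telescope. Taking $\mu_t=\nu_t$ equal to the minimum in \eqref{eq: step size gradient}, the descent inequality becomes $\q J(z^{t+1},v^{t+1})\le\q J(z^t,v^t)-\mu_t\norm{\nabla\q J(z^t,v^t)}_2^2$; summing over $t=0,\dots,T-1$ and using $\q J(z^T,v^T)\ge\q J^{\inf}$ yields $\sum_{t=0}^{T-1}\mu_t\norm{\nabla\q J(z^t,v^t)}_2^2\le\q J(z^0,v^0)-\q J^{\inf}=:\Delta$. Since $\norm{\nabla_z\q J(z^t,v^t)}_2,\norm{\nabla_v\q J(z^t,v^t)}_2\le\norm{\nabla\q J(z^t,v^t)}_2$ and $B(z^t,v^t)\le C_1$, each $\mu_t$ is at least $\min\{C_1^{-1},(\tfrac{15}{4}d)^{-1/3}\norm{\nabla\q J(z^t,v^t)}_2^{-2/3}\}$, hence $\mu_t\norm{\nabla\q J(z^t,v^t)}_2^2\ge\psi(\norm{\nabla\q J(z^t,v^t)}_2^2)$ with $\psi(g):=\min\{C_1^{-1}g,(\tfrac{15}{4}d)^{-1/3}g^{2/3}\}$. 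The partial sums of $\psi(\norm{\nabla\q J(z^t,v^t)}_2^2)$ are bounded by $\Delta$, so the summands tend to $0$; since $\psi$ is continuous, increasing, and positive away from $0$, this forces $\norm{\nabla\q J(z^t,v^t)}_2\to 0$. For the rate, monotonicity of $\psi$ gives $T\,\psi(m)\le\Delta$ with $m:=\min_{0\le t\le T-1}\norm{\nabla\q J(z^t,v^t)}_2^2$; inverting $\psi$ via a case split on which of its two expressions is smaller at $m$ yields $m\le\max\{C_1 T^{-1}\Delta,(\tfrac{15}{4}d)^{1/2}(T^{-1}\Delta)^{3/2}\}$, and the inequality $(\tfrac{15}{4}d)^{1/2}\le C_1^{3/2}$ (which holds because $C_1\ge(15d)^{1/3}$) rewrites the second term as $(C_1 T^{-1}\Delta)^{3/2}$, giving the stated bound.

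I expect the main obstacle to be the first step: one must check that the specific three-term minimum in \eqref{eq: step size gradient}, together with the constant $B(z^t,v^t)$ from \eqref{eq: B}, is exactly what is needed to dominate every fourth-order term in \Cref{col: descent lemma J}, including the mixed $u$--$h$ term, so that the coefficients $\tfrac14,\tfrac54,\tfrac{10}{3}$ and the exponents $-1/3,-2/3,1/3$ all fit together; and one must verify that the a priori norm bounds really push $B(z^t,v^t)$ below $C_1$. Once the descent inequality and the bound $B(z^t,v^t)\le C_1$ are secured, the telescoping, the upgrade from "$\mu_t\norm{\nabla\q J}_2^2\to 0$" to "$\norm{\nabla\q J}_2\to 0$" via $\psi$, and the inversion producing the $T^{-1}$/$T^{-3/2}$ dichotomy are routine.
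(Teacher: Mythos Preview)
Your proposal is correct and follows essentially the same route as the paper: apply \Cref{col: descent lemma J} along the trajectory, check that the three cases of the minimum in \eqref{eq: step size gradient} respectively control the $B$-term, the $\tfrac14 d$-term and the $\tfrac54 d$-cross term, telescope, use $\alpha_T,\beta_T>0$ to bound $\norm{z^t}_2,\norm{v^t}_2$ and hence $B(z^t,v^t)$, and finally invert the resulting $\min\{\,\cdot\,,\,\cdot\,\}$ to obtain the $T^{-1}$/$T^{-3/2}$ dichotomy. The only cosmetic difference is that the paper, instead of exploiting $\mu_t=\nu_t$ and working with $\mu_t\norm{\nabla\q J}_2^2$ directly as you do, performs a case split on which of $\norm{\nabla_z\q J}_2$, $\norm{\nabla_v\q J}_2$ is larger and lower-bounds only the dominant summand $\mu_t\norm{\nabla_z\q J}_2^2$ (resp.\ $\nu_t\norm{\nabla_v\q J}_2^2$) by $C^{-1}\min\{\norm{\nabla\q J}_2^2,\norm{\nabla\q J}_2^{4/3}\}$; your $\psi$-argument is a slightly cleaner packaging of the same computation.
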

In contrast to the nonblind case \cite{Xu.2018}, in which the step size is constant, in \Cref{thm: convergence gradient} the step sizes have to be chosen adaptively and depend on the norm of the gradient. This is a consequence of the much more volatile behavior of $\q J$ described in \Cref{col: descent lemma J}. Note that similar choices of step sizes with partial gradient normalization are necessary to guarantee convergence of the gradient descent for certain classes of loss functions \cite{Chen.2023}. 


With the convergence results for gradient descent established, we can turn to stochastic gradient descent \eqref{eq: stochastic gradient descent}. 
Using ideas from \cite{Liu.2022, Orabona.2020} and \cite{Melnyk.2022b}, we are able to establish a convergence result for scheme \eqref{eq: stochastic gradient descent}. To state the results, we introduce two new bounds.
\begin{lemma}\label{l: stochastic gradient properties 1}
Define
\begin{align}
B_z(z,v) & := (\tfrac{15}{4} d)^{1/2} \left[ \frac{d \norm{v}_2}{\sqrt K \min_{r \in \q R} p_r} \left( \norm{z}_2  \norm{v}_2 +  \norm{y/d}_1^{1/2} \right) + \alpha_T \norm{z}_2 \right], \label{eq: Bz}\\
B_v(z,v) & = (\tfrac{15}{4} d)^{1/2} \left[ \frac{d \norm{z}_2}{\sqrt K \min_{r \in \q R} p_r} \left( \norm{z}_2  \norm{v}_2 +  \norm{y/d}_1^{1/2} \right) + \beta_T \norm{v}_2 \right].  \nonumber
\end{align}
Then, for any $z,v\in \bb C^d$ we have
\[
\tfrac{15}{4} d \norm{g_z(z,v)}_2^2 \le B_z^2(z,v) 
\quad \text{and} \quad 
\tfrac{15}{4} d \norm{g_v(z,v)}_2^2 \le B_v^2(z,v).
\]
\end{lemma}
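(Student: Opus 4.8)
The plan is to reduce everything to a pointwise bound on the per-region gradient $\nabla_z\q L_{r,\varepsilon}(z,v)$ and then propagate it through the average that defines $g_z$. By the decomposition \eqref{eq: Jr definition} we have $\nabla_z\q J_r = \nabla_z\q L_{r,\varepsilon} + \alpha_T p_r z$, so that
$g_z(z,v) = \alpha_T z + \tfrac1K\sum_{k=1}^K p_{r^k}^{-1}\nabla_z\q L_{r^k,\varepsilon}(z,v)$, and the triangle inequality peels off the term $\alpha_T\norm{z}_2$ appearing in \eqref{eq: Bz}, leaving the stochastic-average term to be controlled.

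First I would bound $\norm{\nabla_z\q L_{r,\varepsilon}(z,v)}_2$. Writing the gradient as $\diag(S_r\conj v)F^*[I - \diag(c_r)]F(z\circ S_r v)$ with $c_{r,k} = \sqrt{y_{r,k}+\varepsilon}\,/\sqrt{|[F(z\circ S_r v)]_k|^2+\varepsilon}$, I split the norm using $\norm{\diag(S_r\conj v)} = \norm{S_r v}_\infty \le \norm{v}_2$ and $\norm{F^*} = \sqrt d$. The crucial point is that the residual vector $[I - \diag(c_r)]F(z\circ S_r v)$ has squared $\ell_2$-norm at most the per-region loss: with $a_k := |[F(z\circ S_r v)]_k|$ one computes $|(1 - c_{r,k})[F(z\circ S_r v)]_k| = \tfrac{a_k}{\sqrt{a_k^2+\varepsilon}}\bigl|\sqrt{a_k^2+\varepsilon} - \sqrt{y_{r,k}+\varepsilon}\bigr| \le \bigl|\sqrt{a_k^2+\varepsilon} - \sqrt{y_{r,k}+\varepsilon}\bigr|$ (and the $\varepsilon = 0$ zero-denominator convention makes the corresponding term vanish), whence $\norm{[I-\diag(c_r)]F(z\circ S_r v)}_2^2 \le \q L_{r,\varepsilon}(z,v)$. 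This gives $\norm{\nabla_z\q L_{r,\varepsilon}(z,v)}_2 \le \sqrt d\,\norm{v}_2\sqrt{\q L_{r,\varepsilon}(z,v)}$, which is the amplitude-loss analogue of the Lipschitz-like property mentioned in \Cref{sec: model}.

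Next I would feed this into the average. By convexity of $\norm{\cdot}_2^2$ and $p_{r^k}\ge\min_r p_r$,
\[
\Bigl\| \tfrac{1}{K}\sum_{k=1}^{K} p_{r^k}^{-1}\nabla_z\q L_{r^k,\varepsilon} \Bigr\|_2^2
\le \tfrac{1}{K}\sum_{k=1}^{K} p_{r^k}^{-2}\norm{\nabla_z\q L_{r^k,\varepsilon}}_2^2
\le \frac{d\norm{v}_2^2}{K(\min_r p_r)^2}\sum_{k=1}^{K}\q L_{r^k,\varepsilon}(z,v),
\]
and bounding $\sum_{k}\q L_{r^k,\varepsilon} \le \sum_{r\in\q R}\q L_{r,\varepsilon} = \q L_\varepsilon(z,v)$ is what produces the $K^{-1/2}$ in \eqref{eq: Bz}. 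It then remains to bound $\q L_\varepsilon$ from above by a quartic in the norms: since $|\sqrt{a^2+\varepsilon} - \sqrt{b+\varepsilon}| \le |a - \sqrt b|$ for $a,b\ge 0$, one has $\q L_\varepsilon(z,v) \le \q L_0(z,v) \le \sum_{r}\bigl(d\norm{z\circ S_r v}_2^2 + \sum_k y_{r,k}\bigr)$, and then $\sum_r|(S_r v)_j|^2 \le \norm{v}_2^2$ (valid for both the circular and the noncircular shift, as in \eqref{eq: bilinear norm bound}) together with $\sum_{r,k}y_{r,k} = \norm{y}_1 = d\norm{y/d}_1$ give $\q L_\varepsilon(z,v) \le d(\norm{z}_2\norm{v}_2 + \norm{y/d}_1^{1/2})^2$. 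Collecting the estimates, taking square roots, adding $\alpha_T\norm{z}_2$, multiplying by $(\tfrac{15}{4}d)^{1/2}$ and squaring the resulting inequality between nonnegative quantities yields $\tfrac{15}{4}d\norm{g_z(z,v)}_2^2 \le B_z^2(z,v)$; the bound on $g_v$ is the mirror image, with $\norm{\diag(\conj z)S_{-r}} \le \norm{z}_2$ replacing $\norm{\diag(S_r\conj v)} \le \norm{v}_2$.

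I expect the main obstacle to be the entrywise identity that exhibits $[I - \diag(c_r)]F(z\circ S_r v)$ as coordinatewise dominated by the square root of $\q L_{r,\varepsilon}$ — getting the constant exactly $1$ and cleanly disposing of the degenerate case $\varepsilon = 0$ with a vanishing Fourier coefficient. The remaining bookkeeping, ensuring the weights $p_r^{-1}$ and the factor $K^{-1}$ combine into the precise $(\min_r p_r)^{-1}K^{-1/2}$ of \eqref{eq: Bz}, is routine but must be tracked carefully.
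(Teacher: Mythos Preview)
Your proposal is correct and follows essentially the same route as the paper: bound the per-region gradient by $\sqrt d\,\norm{v}_2\sqrt{\q L_{r,\varepsilon}(z,v)}$, aggregate over the $K$ samples via Cauchy--Schwarz/Jensen to extract the factor $K^{-1/2}(\min_r p_r)^{-1}$, then invoke the quartic upper bound \eqref{eq: L bound} on $\q L_\varepsilon$. The only cosmetic difference is that the paper obtains the per-region gradient bound by writing $\norm{\nabla_z\q L_{r,\varepsilon}}_2 = \max_{\norm{u}_2=1}|u^*\nabla_z\q L_{r,\varepsilon}|$ and reusing the Cauchy--Schwarz estimate \eqref{eq: gradient bound later} together with \eqref{eq: bilinear norm bound}, whereas you factor the gradient as $\diag(S_r\conj v)F^*$ times the residual vector and bound each factor in operator norm; both arguments yield the identical constant.
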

Now, we can state the main result regarding the convergence of the stochastic gradient descent.
\begin{theorem}\label{thm: convergence stochastic gradient}
Let $\varepsilon, \alpha_T, \beta_T \ge 0$, $0 \le \theta < 1$ and $\kappa < \theta/(1+ \theta)$. Consider a sequences $\{z^t\}_{t \ge 0}, \{v^t\}_{t \ge 0}$ generated by \eqref{eq: stochastic gradient descent} with arbitrary starting points $z^0, v^0 \in \bb C^d$ and define 
\begin{align}
\mu_t^{\max} & := \min \left\{ (1+t)^{-1 + \kappa} B^{-\frac{1}{1 - \theta}}(z^t,v^t), B_z^{-\frac{2}{3 - \theta}} (z^t,v^t), B_v^{-\frac{2}{3 -\theta}} (z^t,v^t), (1 - \tfrac{1}{K})^{-1/\theta} \right\} \label{eq: mu max}.
\end{align}
\begin{enumerate}
\item If the step sizes $\mu_t$ and $\nu_t$ are adapted to the filtration $\{ \q F_t \}_{t \ge 0}$ and satisfy $\mu_t, \nu_t \le \mu_t^{\max}$ then, the sequence $\q J(z^{t},v^{t})$ converges a.s. 
\item If $ \alpha_T, \beta_T, \theta > 0$, $\kappa \ge 0$ and the step sizes are given by $\mu_t = \mu \cdot \mu_t^{\max}$ and $\nu_t = \nu \cdot \mu_t^{\max}$ for some $0 < \mu,\nu \le 1$, then 
\[
\min_{t=0,\ldots, T -1} \norm{\nabla \q J(z^t,v^t)}_2^2 
\le
\begin{cases}
\frac{\kappa C_2}{\min\{\mu,\nu\}  [(1+T)^\kappa - 1] }, & \kappa > 0, \\
\frac{C_2}{\min\{\mu,\nu\} \ln (1+T) }, & \kappa = 0,
\end{cases}
\quad \text{a.s.,}
\]
for an a.s. finite random variable $C_2$ and 
\[
\inf_{t \ge 0} \norm{\nabla \q J(z^t,v^t)}_2^2  = 0 \quad \text{a.s.}
\]
\item If, in addition to assumptions in 2, $\varepsilon >0$, then $\norm{\nabla \q J(z^t,v^t)}_2 \to 0$ as $t \to \infty$ a.s. 
\end{enumerate}
\end{theorem}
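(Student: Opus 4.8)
All three parts rest on one master inequality obtained by substituting the stochastic update into \Cref{col: descent lemma J}. Put $u=-\mu_t g_z^t$, $h=-\nu_t g_v^t$, take the conditional expectation $\bb E[\,\cdot\mid\q F_t]$, and use that $\mu_t,\nu_t$ are $\q F_t$-measurable together with unbiasedness $\bb E[g^t\mid\q F_t]=\nabla\q J(z^t,v^t)$: the linear terms collapse to $-2\mu_t\norm{\nabla_z\q J(z^t,v^t)}_2^2-2\nu_t\norm{\nabla_v\q J(z^t,v^t)}_2^2$. In the quadratic remainder, the bracket multiplying $\mu_t^2\norm{g_z^t}_2^2$ is at most $\tfrac13 B(z^t,v^t)+d\big(\tfrac54\nu_t^2\norm{g_v^t}_2^2+\tfrac14\mu_t^2\norm{g_z^t}_2^2\big)$ (directly from \eqref{eq: B}), and symmetrically for $v$. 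I would bound the ``Hessian-like'' contribution with the variance identity $\bb E[\norm{g_z^t}_2^2\mid\q F_t]=(1-\tfrac1K)\norm{\nabla_z\q J(z^t,v^t)}_2^2+\tfrac1K\sum_{r\in\q R}p_r^{-1}\norm{\nabla_z\q J_r(z^t,v^t)}_2^2$ together with the deterministic bound $\sum_{r}p_r^{-1}\norm{\nabla_z\q J_r}_2^2\le\tfrac{4}{15d}B_z^2$ forced by \Cref{l: stochastic gradient properties 1}, and the fourth-order pieces by estimating two of every four stochastic-gradient factors deterministically via the same lemma. The cap $\mu_t^{\max}$ in \eqref{eq: mu max} is reverse-engineered for exactly this: $\mu_t\le(1-\tfrac1K)^{-1/\theta}$ trades the variance-reduction factor $(1-\tfrac1K)$ for an extra $\mu_t^{\theta}$, so the $\norm{\nabla_z\q J}_2^2$-portion of the Hessian-like term becomes $\le\tfrac13\mu_t^{2-\theta}B(z^t,v^t)\norm{\nabla_z\q J}_2^2\le\tfrac13\mu_t\norm{\nabla_z\q J}_2^2$ by the cap $\mu_t\le(1+t)^{-1+\kappa}B^{-\frac{1}{1-\theta}}$, hence is absorbed into the gradient gain; using that cap again alongside $\mu_t\le B_z^{-\frac{2}{3-\theta}},B_v^{-\frac{2}{3-\theta}}$ makes every leftover term at most $(\mathrm{const})(1+t)^{-2+2\kappa}$, where a degree count in $\norm{z^t}_2,\norm{v^t}_2$ shows the powers of $B$ and $B_z,B_v$ cancel. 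Since $\kappa<\theta/(1+\theta)\le\tfrac12$, the sequence $(1+t)^{-2+2\kappa}$ is summable.

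\textbf{Part 1.} The above produces a Robbins--Siegmund inequality $\bb E[\q J(z^{t+1},v^{t+1})\mid\q F_t]\le\q J(z^t,v^t)-\mu_t\norm{\nabla_z\q J(z^t,v^t)}_2^2-\nu_t\norm{\nabla_v\q J(z^t,v^t)}_2^2+Z_t$ with $Z_t\ge0$ and $\sum_t Z_t<\infty$. As $\q J\ge0$, the Robbins--Siegmund convergence theorem (cf.\ \cite{Athreya.2006}) gives that $\q J(z^t,v^t)$ converges a.s., and as a byproduct $\sum_t\big(\mu_t\norm{\nabla_z\q J(z^t,v^t)}_2^2+\nu_t\norm{\nabla_v\q J(z^t,v^t)}_2^2\big)<\infty$ a.s. When $\alpha_T=\beta_T=0$ the iterates are not a priori bounded and the leftover terms in $Z_t$ carry iterate-dependent factors, so I would instead run the argument on the stopped process $(z^{t\wedge\tau_M},v^{t\wedge\tau_M})$, where $\tau_M$ is the first time $\q J$ exceeds $M$, conclude a.s.\ convergence on $\{\tau_M=\infty\}$, and let $M\to\infty$ using $\bb P(\tau_M=\infty)\to1$.

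\textbf{Part 2.} With $\alpha_T,\beta_T,\theta>0$, $\kappa\ge0$ and $\mu_t=\mu\,\mu_t^{\max}$, $\nu_t=\nu\,\mu_t^{\max}$: by Part 1 the a.s.-finite random variable $M:=\sup_t\q J(z^t,v^t)$ controls the iterates, since $\norm{z^t}_2^2\le\q J(z^t,v^t)/\alpha_T\le M/\alpha_T$ and $\norm{v^t}_2^2\le M/\beta_T$. Hence $B(z^t,v^t),B_z(z^t,v^t),B_v(z^t,v^t)$ are a.s.\ bounded, the first entry of the minimum in \eqref{eq: mu max} is eventually the smallest, and $\mu_t^{\max}\ge c\,(1+t)^{-1+\kappa}$ for all $t$ with $c>0$ a.s.\ finite. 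Summing the inequality of Part 1 and using $\min\{\mu_t,\nu_t\}\norm{\nabla\q J(z^t,v^t)}_2^2\le\mu_t\norm{\nabla_z\q J}_2^2+\nu_t\norm{\nabla_v\q J}_2^2$,
\[
\min_{t=0,\ldots,T-1}\norm{\nabla\q J(z^t,v^t)}_2^2\le\frac{\sum_{t\ge0}\big(\mu_t\norm{\nabla_z\q J(z^t,v^t)}_2^2+\nu_t\norm{\nabla_v\q J(z^t,v^t)}_2^2\big)}{\min\{\mu,\nu\}\,c\,\sum_{t=0}^{T-1}(1+t)^{-1+\kappa}},
\]
and $\sum_{t=0}^{T-1}(1+t)^{-1+\kappa}\ge\kappa^{-1}\big[(1+T)^\kappa-1\big]$ for $\kappa>0$ resp.\ $\ge\ln(1+T)$ for $\kappa=0$, which is the claimed rate with $C_2:=c^{-1}\sum_{t\ge0}\big(\mu_t\norm{\nabla_z\q J}_2^2+\nu_t\norm{\nabla_v\q J}_2^2\big)$. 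Since $\sum_t\min\{\mu_t,\nu_t\}\norm{\nabla\q J(z^t,v^t)}_2^2<\infty$ while $\sum_t\min\{\mu_t,\nu_t\}=\infty$ a.s.\ (as $\min\{\mu_t,\nu_t\}\ge\min\{\mu,\nu\}c(1+t)^{-1+\kappa}$), we get $\inf_{t\ge0}\norm{\nabla\q J(z^t,v^t)}_2^2=0$ a.s.

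\textbf{Part 3 and the main obstacle.} For $\varepsilon>0$, $\nabla\q J$ is Lipschitz on bounded sets, so on the a.s.-bounded set carrying the iterates it has an a.s.-finite Lipschitz constant $L$, while $\norm{(z^{t+1},v^{t+1})-(z^t,v^t)}_2\le(\mathrm{const})\,\mu_t^{\max}\to0$ by \Cref{l: stochastic gradient properties 1} and the iterate bound. Combining $\q J(z^t,v^t)\to\q J^{\infty}$ a.s., $\sum_t\mu_t=\infty$, and $\liminf_t\norm{\nabla\q J(z^t,v^t)}_2=0$, a standard ``no sustained oscillation'' argument (as for ePIE/SGD in \cite{Melnyk.2022b}) forbids $\limsup_t\norm{\nabla\q J(z^t,v^t)}_2\ge2\delta>0$: across each crossing of $\norm{\nabla\q J}_2$ from $\le\delta$ to $\ge2\delta$ the iterate travels at least $\delta/L$, so $\q J$ drops by a fixed positive multiple of $\delta^3$ up to the a.s.-summable $Z_t$- and martingale-fluctuations, contradicting the a.s.\ convergence of $\q J(z^t,v^t)$; hence $\norm{\nabla\q J(z^t,v^t)}_2\to0$ a.s. The technical heart throughout is the calibration of $\mu_t^{\max}$: \Cref{col: descent lemma J} contributes fourth-order increment terms absent from the textbook $L$-smooth analysis, with coefficients growing in $\norm{z^t}_2,\norm{v^t}_2$, and fitting all the exponents $\tfrac1{1-\theta},\tfrac2{3-\theta},\tfrac1\theta$ simultaneously so that the residual satisfies $Z_t\propto(1+t)^{-2(1-\kappa)}$ with every iterate-dependent factor cancelled — and, for $\alpha_T=\beta_T=0$, supplying the localization in Part 1 — is where the real work lies.
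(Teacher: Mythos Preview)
Your overall architecture---Robbins--Siegmund for Part 1, iterate boundedness plus divergent step sizes for Part 2, local Lipschitzness plus an Orabona-type lemma for Part 3---matches the paper. Two points, however, are not as in the paper and the first of them is a genuine gap.

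\textbf{The variance bound.} You control $\bb E[\norm{g_z}_2^2\mid\q F_t]$ through \Cref{l: stochastic gradient properties 1}, i.e.\ by the deterministic cap $\tfrac{4}{15d}B_z^2(z^t,v^t)$, which is degree six in $(\norm{z^t}_2,\norm{v^t}_2)$. The paper instead proves (its Lemma on ``stochastic gradient properties'') an \emph{expected-smoothness} bound
\[
\bb E[\norm{g_z}_2^2\mid\q F_t]\;\le\;\gamma_t^z\,[\q J(z^t,v^t)-\q J^{\inf}]\;+\;\rho\,\norm{\nabla_z\q J}_2^2\;+\;\delta_t^z,
\]
with $\gamma_t^z=\tfrac{d\norm{v^t}_2^2}{K\min_r p_r}+\tfrac{\alpha_T}{K}$, obtained by applying the one-step descent bound of \Cref{thm: previous results} to each $\q J_r$. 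This is the crucial ingredient you are missing: it puts the leftover into the form $(1+\eta_t)Y_t$ of Robbins--Siegmund with $Y_t=\q J(z^t,v^t)-\q J^{\inf}$, and the paper then checks by direct algebra that $\gamma_t^z\mu_t^{1+\theta}\le\eta_t\propto(1+t)^{-(1-\kappa)(1+\theta)}$ with \emph{deterministic} $\eta_t$, because the $\norm{z^t}_2^2,\norm{v^t}_2^2$ in $\gamma_t^z$ cancel against those in $B(z^t,v^t)^{(1+\theta)/(1-\theta)}$. This works for $\alpha_T=\beta_T=0$ with no localization. Your ``degree count'' for $\mu_t^2\,B\,B_z^2$ does not close in general (for small $\theta$ the degrees do not cancel), and your stopping-time fallback is circular: the bound on $\sum_t Z_t$ for the stopped process depends on $M$, so you cannot conclude $\bb P(\tau_M=\infty)\to1$ without already controlling $\q J$. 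The condition $\kappa<\theta/(1+\theta)$ is exactly what makes $(1-\kappa)(1+\theta)>1$; your $(1+t)^{-2+2\kappa}$ exponent is not what the analysis actually produces.

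\textbf{Order of operations.} A second, smaller difference: the paper \emph{first} uses the caps in $\mu_t^{\max}$ deterministically to collapse \Cref{col: descent lemma J} to
\[
\q J(z^{t+1},v^{t+1})\le\q J(z^t,v^t)-2\mu_t\RE(g_z^*\nabla_z\q J)-2\nu_t\RE(g_v^*\nabla_v\q J)+\mu_t^{1+\theta}\norm{g_z}_2^2+\nu_t^{1+\theta}\norm{g_v}_2^2,
\]
and only then takes $\bb E[\cdot\mid\q F_t]$. This avoids fourth moments of $g$ entirely and explains the role of the exponent $1+\theta$; the fourth cap $\mu_t\le(1-\tfrac1K)^{-1/\theta}$ is used afterwards to turn $\rho\,\mu_t^{1+\theta}\norm{\nabla_z\q J}_2^2$ into $\le\mu_t\norm{\nabla_z\q J}_2^2$, not in the direction you describe. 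Your Parts 2 and 3 are essentially the paper's argument; in Part 3 the paper makes your ``martingale-fluctuations'' precise via an Orabona-type lemma together with $L^2$-bounded martingale convergence for $M_t=\sum_{s<t}\mu_s^{\max}(g^s-\nabla\q J(z^s,v^s))$.
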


Let us elaborate on the statement of \Cref{thm: convergence stochastic gradient}. In comparison to \Cref{thm: convergence gradient} two additional parameters $\kappa$ and $\theta$ appear. The first parameter $\kappa$ controls the decay of the step sizes over time and the speed at which the minimal gradient converges to zero. This is similar to the stochastic gradient descent convergence properties observed for PIE in nonblind ptychography \cite{Melnyk.2022b} and stochastic gradient in general \cite{Liu.2022}. The second, $\theta$, controls the power of the partial gradient normalization by the step sizes. In view of bounds in \Cref{l: stochastic gradient properties 1}, the second and the third cases in \eqref{eq: mu max} correspond to the norms of stochastic gradients, $\norm{g_z(z^t,v^t)}_2^{-2/(3 -\theta)}$ and $\norm{g_v(z^t,v^t)}_2^{-2/(3 -\theta)}$. If $\theta$ is set to zero, the power $-2/3$ would precisely correspond to the same powers in \Cref{thm: convergence gradient}. However, for stronger convergence guarantees 2 and 3 of \Cref{thm: convergence stochastic gradient}, we require a slightly stronger scaling $\theta > 0$. In the literature \cite{Khaled.2023, Liu.2022, Melnyk.2022b}, $\theta$ is commonly chosen as one. Yet, in our case, this leads to small updates in \eqref{eq: stochastic gradient descent},
\[
\norm{z^{t+1} - z^t}_2 = \mu_t \norm{g_z(z^t,v^t)}_2  \le 1, 
\quad 
\norm{v^{t+1} - v^t}_2 = \nu_t \norm{g_v(z^t,v^t)}_2  \le 1,
\] 
and motivates the restriction $0 \le \theta < 1$.

Similarly to the nonblind case \cite{Melnyk.2022b}, we observe that the convergence speed of the stochastic gradient descent \eqref{eq: stochastic gradient descent} is much slower than the gradient descent \eqref{eq: gradient descent}. This is compensated by a much larger computational complexity for the latter as the full gradient has to be evaluated. We also observe that smoothing $\varepsilon>0$ is necessary to guarantee that the gradient vanishes.

Coming back to ePIE, we showed in \eqref{eq: ePIE as stochastic gradient} that it can be seen as stochastic gradient descent with steps sizes $\mu_t = \alpha_T p_{r^t}/d \norm{v^t}_\infty^2$ and $\nu_t = \alpha_T p_{r^t}/d \norm{z^t}_\infty^2$. Note that they depend on $r^t$ unless $p$ is a uniform distribution over $\q R$. In such a case, we are able to apply \Cref{thm: convergence stochastic gradient} by choosing parameters $\alpha_t$ and $\beta_t$ appropriately. This would only provide the weakest convergence that the loss function eventually stops at a certain level. Hence, a stronger convergence properties can be obtained for ePIE by including Tickhonov regularization and smoothing $\varepsilon > 0$.

\subsection{Existence of larger step sizes}\label{sec: larger step size}

The main criticism of our results above is the choice of step sizes, which have to be sufficiently small to control the right-hand side in \Cref{col: descent lemma J}. However, the bound in \Cref{col: descent lemma J} significantly simplifies if one of the variables remains fixed.
\begin{theorem}[Version of \cite{Filbir.2022}] \label{thm: previous results}
Let $\varepsilon,\alpha_T, \beta_T \ge 0$. Consider $z_+ = z - L_v^{-1} \nabla_z \q J(z,v)$ and $v_+ = v - L_z^{-1} \nabla_v \q J(z,v)$ with $L_v$ and $L_z$ given by
\begin{align}
L_v & = d \max_{j = 1, \ldots, d} \sum_{r \in R} |(S_r v)_j|^2 + \alpha_T \le d \norm{v}_2^2 + \alpha_T, \label{eq: Lipschitz constants} \\
L_z & = d \max_{j = 1, \ldots, d} \sum_{r \in R} |(S_{-r} z)_j|^2 + \beta_T \le d \norm{z}_2^2 + \beta_T. \nonumber
\end{align}
Then,
\[
\q J(z_+, v) \le \q J(z, v) - L_v^{-1} \norm{\nabla_z \q J(z,v)}_2^2 
\quad \text{and} \quad
\q J(z, v_+) \le \q J(z, v) - L_z^{-1} \norm{\nabla_v \q J(z,v)}_2^2.
\]
\end{theorem}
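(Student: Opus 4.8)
The plan is to prove a one-variable ("partial") descent lemma: with $v$ held fixed, $z\mapsto\q J(z,v)$ admits a quadratic upper envelope at every point with curvature exactly $L_v$, and symmetrically in $v$. Both displayed inequalities then follow instantly by plugging the explicit steps $z_+$ and $v_+$ into that envelope. So the real work is to produce the envelope, and I would reduce it to a scalar estimate applied entrywise in Fourier space.

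The crux is the following scalar fact: for $a,\tilde a\in\bb C$, $b\ge 0$ and $\varepsilon\ge 0$,
\[
\left(\sqrt{|\tilde a|^2+\varepsilon}-\sqrt{b+\varepsilon}\right)^2
\le \left(\sqrt{|a|^2+\varepsilon}-\sqrt{b+\varepsilon}\right)^2
+ 2\RE\!\left(\overline{(\tilde a-a)}\Big(a-\tfrac{\sqrt{b+\varepsilon}\,a}{\sqrt{|a|^2+\varepsilon}}\Big)\right)
+ |\tilde a-a|^2,
\]
with the usual convention that the fraction vanishes when $a=\varepsilon=0$. I would prove this by writing the left-hand side as $|\tilde a|^2+\varepsilon+b+\varepsilon-2\sqrt{b+\varepsilon}\sqrt{|\tilde a|^2+\varepsilon}$, expanding $|\tilde a|^2=|a|^2+2\RE(\bar a(\tilde a-a))+|\tilde a-a|^2$ exactly, and bounding the cross term from above via convexity of $a\mapsto\sqrt{|a|^2+\varepsilon}$ (the subgradient inequality $\sqrt{|\tilde a|^2+\varepsilon}\ge\sqrt{|a|^2+\varepsilon}+\RE(\overline{(\tilde a-a)}\,a/\sqrt{|a|^2+\varepsilon})$, which at $a=\varepsilon=0$ degenerates harmlessly to $|\tilde a|\ge 0$). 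The expression in parentheses is exactly the Wirtinger derivative of the single summand, and the convexity argument is what lets the bound be uniform in $a,b,\varepsilon$, including the non-smooth point $a=\varepsilon=0$. I expect this to be the main obstacle: it is genuinely weaker than Lipschitz continuity of the gradient (the scalar Hessian has no lower bound), but the one-sided curvature bound $\le 2$ is all that is needed, and getting that constant sharp is what yields the clean $L_v$ below.

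Next I would assemble $\q L_\varepsilon$. Applying the scalar inequality with $\tilde a=[F(\tilde z\circ S_rv)]_k$, $a=[F(z\circ S_rv)]_k$, $b=y_{r,k}$ and summing over $k$ and $r\in\q R$: since $z\mapsto F(z\circ S_rv)=F\diag(S_rv)z$ is linear, the linear terms collapse exactly to $2\RE((\tilde z-z)^*\nabla_z\q L_\varepsilon(z,v))$ (matching the gradient formula in \Cref{sec: model}), while the quadratic terms become $\sum_{r}\norm{F\diag(S_rv)(\tilde z-z)}_2^2=d\sum_j\big(\sum_{r\in\q R}|(S_rv)_j|^2\big)|\tilde z_j-z_j|^2$ using $F^*F=dI$. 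Bounding this by $d\,(\max_j\sum_{r}|(S_rv)_j|^2)\norm{\tilde z-z}_2^2$ and adding $\alpha_T\norm{z}_2^2$ (whose exact expansion produces the gradient contribution $\alpha_Tz$ and the additive $\alpha_T$ in $L_v$; the term $\beta_T\norm{v}_2^2$ is constant) gives $\q J(\tilde z,v)\le\q J(z,v)+2\RE((\tilde z-z)^*\nabla_z\q J(z,v))+L_v\norm{\tilde z-z}_2^2$. The upper bound $L_v\le d\norm{v}_2^2+\alpha_T$ holds because the shifts in $\q R$ are distinct, so $\sum_{r\in\q R}|(S_rv)_j|^2$ is a partial sum of $\norm{v}_2^2$ (valid for both the circular and the non-circular interpretation of $S_r$).

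Finally, substituting $\tilde z=z_+=z-L_v^{-1}\nabla_z\q J(z,v)$ turns the linear term into $-2L_v^{-1}\norm{\nabla_z\q J(z,v)}_2^2$ and the quadratic term into $L_v^{-1}\norm{\nabla_z\q J(z,v)}_2^2$, which sum to $-L_v^{-1}\norm{\nabla_z\q J(z,v)}_2^2$, i.e.\ the first claim. The second claim is the mirror image: now $v\mapsto F(z\circ S_rv)=F\diag(z)S_rv$ is linear, and reindexing $\sum_{r}d\sum_j|z_j|^2|(S_r(\tilde v-v))_j|^2=d\sum_l\big(\sum_{r}|(S_{-r}z)_l|^2\big)|(\tilde v-v)_l|^2$ gives curvature constant $L_z=d\max_l\sum_{r}|(S_{-r}z)_l|^2+\beta_T\le d\norm{z}_2^2+\beta_T$; plugging in $v_+=v-L_z^{-1}\nabla_v\q J(z,v)$ finishes. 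Everything after the scalar estimate is bookkeeping with $F^*F=dI$ and the diagonal/shift structure.
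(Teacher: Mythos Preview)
The paper does not actually prove this theorem; it is stated with attribution to \cite{Filbir.2022} and used as a black box (for instance in the proof of \Cref{l: stochastic gradient properties 2} and in \Cref{thm: smart step size}). So there is no in-paper argument to compare against.

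That said, your proof is correct and self-contained. Your scalar envelope inequality is precisely the one-sided quadratic upper bound that makes the partial descent lemma work, and your derivation of it via convexity of $a\mapsto\sqrt{|a|^2+\varepsilon}$ is clean and covers the non-smooth point $a=\varepsilon=0$ uniformly. The aggregation step using $F^*F=dI$ and the diagonal structure to produce exactly $d\max_j\sum_{r}|(S_r v)_j|^2$ (rather than the cruder $d\norm{v}_2^2$ which the paper's own \Cref{col: descent lemma J} would give with $h=0$) matches the sharp constant in \eqref{eq: Lipschitz constants}. The treatment of the Tikhonov term and the final substitution $\tilde z=z_+$ are routine, and the mirror argument for $v_+$ with the reindexing $\sum_r|z_j|^2|(S_r h)_j|^2=\sum_l(\sum_r|(S_{-r}z)_l|^2)|h_l|^2$ is correct. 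Your justification of the bound $\sum_{r\in\q R}|(S_r v)_j|^2\le\norm{v}_2^2$ via distinctness of the shifts is the same implicit assumption the paper uses in \eqref{eq: bilinear norm bound}.
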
 
The step sizes $L_v^{-1}$ and $L_z^{-1}$ are much larger than their counterparts in \eqref{eq: step size gradient}. Therefore, alternating minimization procedures for blind ptychography such as in \cite{Hesse.2015, Chang.2019, Filbir.2022, Melnyk.2023} are more efficient than joint optimization. This naturally leads to a question if the step sizes for joint optimization \eqref{eq: gradient descent} can be chosen larger, preferably as in \Cref{thm: previous results}.   
Since $\q J$ is continuous, it attains its minimum on the closed interval connecting $(z_+,v)$ and $(z,v_+)$ consisting of the points $\{ ( z(\gamma), v(\gamma) ) : \gamma \in [0,1] \}$  given by
\begin{align}
z(\gamma) &=  z + \gamma (z_+ - z) =  z - \gamma L_v^{-1} \nabla_z \q J(z,v),
\ \label{eq: line interval} \\
v(\gamma) &= v_+ + \gamma (v - v_+) = v - (1-\gamma) L_z^{-1} \nabla_v \q J(z,v). \nonumber
\end{align}
Hence, there exists $\gamma_* \in [0,1]$ that for a point $(z(\gamma_*), v(\gamma_*) )$ the values $\q J(z(\gamma_*), v(\gamma_*)) \le \min \{\q J(z_+,v), \q J(z,v_+)\}$ and, consequently, both inequalities in \Cref{thm: previous results} hold. If the iterates are selected this way, we obtain the following convergence guarantees. 
\begin{theorem}\label{thm: smart step size}
Let $\varepsilon \ge 0, \alpha_T, \beta_T > 0$. Consider the sequences $\{ z^t\}_{t \ge 0}$ and $\{v^t\}_{t \ge 0}$ to be the sequences where pair $z^{t+1}, v^{t+1}$ is constructed via \eqref{eq: line interval} with $z = z^{t}$, $v = v^{t}$ and $\gamma$ chosen such that 
\begin{equation} \label{eq: gamma choice}
\q J(z^{t+1}, v^{t+1}) 
= \q J(z(\gamma), v(\gamma)) \le \min\{\q J(z_+,v), \q J(z, v_+) \}.
\end{equation}
Then, we have
\begin{align*} 
& \q J(z^{t+1}, v^{t+1}) 
\le \q J(z^t, v^t) 
- \tfrac{1}{2} L_{z^t}^{-1} \norm{ \nabla_z \q J(z^t,v^t) }_2^2 
- \tfrac{1}{2} L_{v^t}^{-1} \norm{ \nabla_v \q J(z^t,v^t)}_2^2, \quad t \ge 0,
\end{align*}
$\norm{ \nabla \q J(z^t,v^t) }_2^2  \to 0 \text{ as } t \to \infty$ and
\begin{align*}
\min_{t=0,\ldots, T-1} \norm{ \nabla \q J(z^t,v^t) }_2^2 
\le T^{-1} \left[ d \max\{ \alpha_T^{-1}, \beta_T^{-1} \} \q J(z^0, v^0) + \max\{ \alpha_T, \beta_T \} \right] \cdot \left[ \q J(z^0, v^0) -  \q J^{\inf}\right].
\end{align*}
\end{theorem}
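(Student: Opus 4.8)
The plan is to turn the two one-block guarantees of \Cref{thm: previous results} into a single joint-update descent inequality, and then telescope, keeping the block Lipschitz constants $L_{z^t}, L_{v^t}$ bounded along the trajectory via the Tikhonov terms.

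First I would establish the descent inequality. Fix $t$ and apply \Cref{thm: previous results} with $z = z^t$, $v = v^t$: with $z_+ = z^t - L_{v^t}^{-1}\nabla_z\q J(z^t,v^t)$ and $v_+ = v^t - L_{z^t}^{-1}\nabla_v\q J(z^t,v^t)$ one has $\q J(z_+, v^t) \le \q J(z^t,v^t) - L_{v^t}^{-1}\norm{\nabla_z\q J(z^t,v^t)}_2^2$ and $\q J(z^t, v_+) \le \q J(z^t,v^t) - L_{z^t}^{-1}\norm{\nabla_v\q J(z^t,v^t)}_2^2$. The segment \eqref{eq: line interval} connects $(z^t, v_+)$ at $\gamma = 0$ and $(z_+, v^t)$ at $\gamma = 1$, and since $\q J$ is continuous it attains its minimum over this compact segment at some $\gamma_*$, which is at most the value at either endpoint; hence \eqref{eq: gamma choice} is satisfiable, and
\[
\q J(z^{t+1},v^{t+1}) \le \min\{\q J(z_+,v^t), \q J(z^t,v_+)\} \le \tfrac12\q J(z_+,v^t) + \tfrac12\q J(z^t,v_+).
\]
Inserting the two one-block bounds gives the asserted descent inequality, and in particular $\{\q J(z^t,v^t)\}_{t \ge 0}$ is nonincreasing.

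Next I would make the block Lipschitz constants uniformly bounded. Because $\q L_\varepsilon \ge 0$ and the loss is nonincreasing, $\alpha_T\norm{z^t}_2^2 \le \q J(z^t,v^t) \le \q J(z^0,v^0)$ and $\beta_T\norm{v^t}_2^2 \le \q J(z^0,v^0)$, where positivity of $\alpha_T,\beta_T$ is used; substituting $\norm{z^t}_2^2 \le \q J(z^0,v^0)/\alpha_T$ and $\norm{v^t}_2^2 \le \q J(z^0,v^0)/\beta_T$ into \eqref{eq: Lipschitz constants} yields $\max\{L_{z^t}, L_{v^t}\} \le d\max\{\alpha_T^{-1},\beta_T^{-1}\}\q J(z^0,v^0) + \max\{\alpha_T,\beta_T\}$ for every $t$. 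Plugging this uniform bound into the descent inequality and using $\norm{\nabla\q J}_2^2 = \norm{\nabla_z\q J}_2^2 + \norm{\nabla_v\q J}_2^2$ gives $\q J(z^{t+1},v^{t+1}) \le \q J(z^t,v^t) - c\,\norm{\nabla\q J(z^t,v^t)}_2^2$ with $c^{-1}$ a fixed multiple of that bound. Telescoping over $t = 0,\ldots,T-1$ and using $\q J^{\inf} \ge 0$ bounds $\sum_{t=0}^{T-1}\norm{\nabla\q J(z^t,v^t)}_2^2$ by a finite multiple of $\q J(z^0,v^0) - \q J^{\inf}$; letting $T \to \infty$ shows the series converges, hence $\norm{\nabla\q J(z^t,v^t)}_2 \to 0$, and bounding the minimum over $t = 0,\ldots,T-1$ by the average delivers the stated $O(1/T)$ rate.

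The only genuinely nontrivial step is the first one: recognizing that \Cref{thm: previous results}, which by itself only controls a $z$-update with $v$ frozen (and vice versa), can be upgraded to a joint-update descent by minimizing $\q J$ along the segment \eqref{eq: line interval} joining the two one-block iterates $(z_+, v^t)$ and $(z^t, v_+)$. The rest is the standard telescoping argument together with the elementary observation that the Tikhonov penalty keeps the iterates — and therefore $L_{z^t}, L_{v^t}$ — bounded along the trajectory; this is also exactly why the step sizes here, $L_{v^t}^{-1}$ and $L_{z^t}^{-1}$, may be taken much larger than those in \eqref{eq: step size gradient}, which had to absorb the full quartic growth of \Cref{col: descent lemma J}.
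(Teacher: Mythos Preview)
Your proposal is correct and follows the same approach as the paper, whose proof consists of the two-line remark that the descent inequality follows from \eqref{eq: gamma choice} combined with \Cref{thm: previous results}, and that the rest is analogous to the proof of \Cref{thm: convergence gradient}. Your averaging trick $\min\{a,b\}\le\tfrac12(a+b)$ is exactly the mechanism implicit in that first sentence, and the telescoping with the Tikhonov-based bound on $L_{z^t},L_{v^t}$ is the intended ``rest.'' Two cosmetic points: you only need $\q J(z^T,v^T)\ge \q J^{\inf}$, not $\q J^{\inf}\ge 0$; and your derivation produces an extra factor $2$ in the rate (and has $L_{v^t}^{-1}$ paired with $\nabla_z$, as \Cref{thm: previous results} actually gives), which suggests the constants in the displayed statement are slightly loose, not that your argument is.
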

\begin{proof}
The first inequality follows from \eqref{eq: gamma choice} and \Cref{thm: previous results}. The rest of the proof is similar to the proof of \Cref{thm: convergence gradient} or Theorem 3.4 in \cite{Filbir.2022}.
\end{proof}
\Cref{thm: smart step size} is similar to Theorem 3.4 in \cite{Filbir.2022}, but it allows for simultaneous change of $z^t$ and $v^t$ unlike the alternating minimization procedure in in \cite{Filbir.2022}. However, it comes with extra computational costs resulting from a search for $\gamma$ at each iteration. The minimal increase is two times as $\gamma$ can be chosen as $\gamma = 1$ if $\q J(z_+, v) > \q J(z,v_+)$ and $\gamma = 0$ otherwise, which requires two evaluations of the loss $\q J$ instead of single evaluation in \cite{Filbir.2022}. An additional outcome is that the continuity of the gradient is not necessary, i.e., $\varepsilon$ can be zero, unlike in \cite{Filbir.2022}. 

This idea could potentially be used for stochastic gradient descent. However, in this case, the evaluation of the function value is costly and is almost the same as the computation of the full gradient, which negates the computational speed-up of the stochastic gradient. 

\section{Proofs}\label{sec: proofs}

\subsection{Wirtinger derivatives and derivation of descent lemmas}\label{sec: Wirtinger derivatives}

The proof of descent lemma consists of several steps. Firstly, we rewrite $\q L_\varepsilon$ from \eqref{eq: loss} in a more general form
\begin{equation}\label{eq: loss general}
\q L_\varepsilon(z,v) = \sum_{j = 1}^m \left[ \sqrt{ |z^T Q_j v|^2 + \varepsilon} - \sqrt{y_j + \varepsilon} \right]^2.
\end{equation}
In order to do so, let us first define vectors $f^k$ with $f^k_\ell = e^{2 \pi i k \ell / d}$, $\ell \in [d]$. Then, we can rewrite
\begin{align}
[ F (z \circ S_r v) ]_k 
& = (f^k)^* (z \circ S_r v) 
=  (f^k)^* \diag(z)  S_r v
= (\diag(z) \conj{f^k})^T S_r v \nonumber \\
& = (\diag(\conj{f^k}) z)^T S_r v
= z^T  \diag(\conj{f^k}) S_r v
=: z^T Q_{r,k} v. \label{eq: matrix Q definition}
\end{align}
Hence, with the indexing $j = (r,k)$, $r \in \q R, k \in [d]$ and $m:= d R$, we can instead work with \eqref{eq: loss general}. 

The analysis of $\q L_\varepsilon$ is based on the Wirtinger calculus, which becomes handy for the optimization of real-valued functions of complex variables. Note that the class of such functions is not holomorphic \cite[Proposition 4.0.1]{Hunger.2008}. Let us consider a differentiable (in the real sense) function $f: \bb C \to \bb R$ with an argument $z = \RE z + i \IM z$. Then, the Wirtinger derivatives are defined as
\[
\frac{\partial f}{\partial z} := \frac{1}{2}\frac{\partial f}{\partial \RE}  - \frac{i}{2}\frac{\partial f}{\partial \IM},
\quad 
\frac{\partial f}{\partial \conj{z} } := \frac{1}{2}\frac{\partial f}{\partial \RE}  + \frac{i}{2}\frac{\partial f}{\partial \IM}.
\] 
They are a linear transformation of the derivatives with respect to real and imaginary parts $\RE z$ and $\IM z$ and, hence, the standard results such as derivation rules, extension to the multivariate case and etc. hold for Wirtinger derivatives. In addition, the conjugation rule applies,
\begin{equation}\label{eq: conj rule}
\frac{\partial f}{\partial \conj z} = \conj{ \frac{\partial  f}{\partial z} }.
\end{equation}   
Now, let us consider $f: \bb C^d  \to \bb R$ with Wirtinger derivatives
\[
\frac{\partial f}{\partial z} = \left( \frac{\partial f}{\partial z_1}, \ldots,  \frac{\partial f}{\partial z_d} \right)
\quad
\text{and}
\quad
\frac{\partial f}{\partial \conj{z} } = \left( \frac{\partial f}{\partial \conj z_1}, \ldots,  \frac{\partial f}{\partial \conj z_d} \right).
\]
The differential of $f$ is given by
\[
d f = \sum_{j =1}^d\left[ \frac{\partial f}{\partial \alpha_j} d \alpha_j + \frac{\partial f}{\partial \beta_j} d \beta_j \right]
= \sum_{j =1}^d \left[ \frac{\partial f}{\partial z_j} d z_j + \frac{\partial f}{\partial \conj z_j} d \conj z_j \right]
= \frac{\partial f}{\partial z} d z + \frac{\partial f}{\partial \conj z} d \conj z 
= 2 \RE\left[ \frac{\partial f}{\partial z} d z \right],
\]
where \eqref{eq: conj rule} and real-valuedness of $f$ were used in the last step. Consequently, the direction of the steepest ascend $d z$ is aligned with the conjugate transpose of $\frac{\partial f}{\partial z}$, which is why Wirtinger gradients in direction of $z$ and $\conj z$ are defined as
\[
\nabla_z f := \left(\frac{\partial f}{\partial z} \right)^* = \left(\frac{\partial f}{\partial \conj z} \right)^T
\text{ and }
\nabla_{\conj z} f :=  \conj{\nabla_z f}.
\]
Furthermore, for twice differentiable $f$ we would need the second-order Taylor expansion
\begin{equation}\label{eq: Taylor}
f(z + s) = f(z) + 2 \RE(s^* \nabla_z f(z)) + \begin{bmatrix} s \\ \conj s \end{bmatrix}^*
\int_0^1 (1 - t) \nabla^2 f(z + t s) d t \begin{bmatrix} s \\ \conj s \end{bmatrix},
\end{equation}
where $z,s \in \bb C^d$ and $\nabla^2 f$ is the Hessian matrix
\[
\nabla^2 f = 
{
\renewcommand\arraystretch{1.75}
\begin{bmatrix}
\nabla_{z,z}^2 f & \nabla_{\conj z, z}^2 f \\
\nabla_{z, \conj{z}}^2 f & \nabla_{\conj{z}, \conj{z}}^2 f
\end{bmatrix}
}
=
{
\renewcommand\arraystretch{1.75}
\begin{bmatrix}
\frac{\partial}{\partial z}  \nabla_z f & \frac{\partial}{\partial \conj z}  \nabla_z f \\
\frac{\partial}{\partial z}  \nabla_{\conj{z}} f & \frac{\partial}{\partial \conj{z}}  \nabla_{\conj{z}} f
\end{bmatrix}
}
=
{
\renewcommand\arraystretch{1.75}
\begin{bmatrix}
\frac{\partial}{\partial z}  \nabla_z f & \frac{\partial}{\partial \conj z}  \nabla_z f \\
\conj{ \frac{\partial}{\partial \conj{z}}  \nabla_{z} f } & \conj{ \frac{\partial}{\partial z}  \nabla_{z} f }
\end{bmatrix},
}
\]
where the last equality follows from \eqref{eq: conj rule}. Consequently, the second-order term in \eqref{eq: Taylor} simplifies to
\begin{equation}\label{eq: second order simple}
\begin{bmatrix} s \\ \conj s \end{bmatrix}^*
\nabla^2 f(z)
\begin{bmatrix} s \\ \conj s \end{bmatrix}
= 2 \RE \left ( s^* \nabla_{z,z}^2 f(z) s + s^*  \nabla_{\conj z, z}^2 f(z) \conj s \right).
\end{equation}
The descent lemma is generally established by bounding from above the second-order term in \eqref{eq: Taylor} and \eqref{eq: second order simple} is a convenient representation to work with. 

Returning to $\q L_\varepsilon$ and $\q J$, we start with the case $\varepsilon >0$ so that $\q L_\varepsilon$ is twice continuously differentiable. Since the argument of $\q L_\varepsilon$ is two variables $z$ and $v$ we will use notation 
\[
\nabla \q L_\varepsilon 
:= \nabla_{(z,v)} \q L_\varepsilon
= \begin{bmatrix} \nabla_z \q L_\varepsilon \\ \nabla_v \q L_\varepsilon \end{bmatrix}
\quad \text{and} \quad 
\nabla_{\conj{(z,v)}} \q L_\varepsilon = \conj{\nabla \q L_\varepsilon}. 
\]
In the case $\varepsilon = 0$, $\q L_0$ is not differentiable at points where $z^T Q_j v = 0$ for at least one $j \in [m]$. Thus, for $\q L_0$ we define the generalized Wirtinger gradient as pointwise limit
\[
\nabla \q L_0 (z,v) = \lim_{\varepsilon \to 0+} \nabla \q L_\varepsilon (z,v). 
\]
Following this logic, for the case $\varepsilon > 0$ we split the blocks of the Hessian matrix into subblocks as
\[
\nabla^2_{(z,v),(z,v)} \q L_\varepsilon
={
\renewcommand\arraystretch{1.75}
\begin{bmatrix}
\nabla_{z,z}^2 \q L_\varepsilon & \nabla_{v,z}^2 \q L_\varepsilon \\
\nabla_{z, v}^2 \q L_\varepsilon & \nabla_{v, v}^2 \q L_\varepsilon
\end{bmatrix}
}
\quad \text{and} \quad
\nabla^2_{\conj{(z,v)},(z,v)} \q L_\varepsilon
={
\renewcommand\arraystretch{1.75}
\begin{bmatrix}
\nabla_{\conj{z},z}^2 \q L_\varepsilon & \nabla_{\conj{v},z}^2 \q L_\varepsilon \\
\nabla_{\conj{z}, v}^2 \q L_\varepsilon & \nabla_{\conj{v}, v}^2 \q L_\varepsilon
\end{bmatrix}.
}
\]
Consequently, the vector $s$ in \eqref{eq: second order simple} splits into two parts, $s := (u,h)$ corresponding to change $(z + u, v + h)$. Hence, \eqref{eq: second order simple} becomes
\begin{equation}\label{eq: quadratic simple L}
\begin{bmatrix} s \\ \conj s \end{bmatrix}^*
\nabla^2 \q L_\varepsilon(z,v)
\begin{bmatrix} s \\ \conj s \end{bmatrix}
= 2 \RE \left ( \begin{bmatrix} u \\ h \end{bmatrix}^* \nabla^2_{(z,v),(z,v)} \q L_\varepsilon(z,v) \begin{bmatrix} u \\ h \end{bmatrix} 
+ \begin{bmatrix} u \\ h \end{bmatrix}^* \nabla^2_{\conj{(z,v)},(z,v)} \q L_\varepsilon(z,v) \begin{bmatrix} \conj u \\ \conj h \end{bmatrix}\right).
\end{equation}

The next lemma summarizes the computation for both summands on the right-hand side and provides a suitable lower and upper bounds.
\begin{lemma}\label{l: Hessian bound}
For all $s = (u,h) \in \bb C^{2d}$ we have
\[
\begin{bmatrix} s \\ \conj s \end{bmatrix}^*
\nabla^2 \q L_\varepsilon(z,v)
\begin{bmatrix} s \\ \conj s \end{bmatrix}
\le 2 \sum_{j=1}^m  |u^T Q_j v + z^T Q_j h |^2 + 4 \left[ \q L_\varepsilon(z,v) \right]^{1/2} \cdot \left[ \sum_{j=1}^m |u^T Q_j h|^2 \right]^{1/2}.
\]
and
\begin{align*}
\begin{bmatrix} s \\ \conj s \end{bmatrix}^*
\nabla^2 \q L_\varepsilon(z,v)
\begin{bmatrix} s \\ \conj s \end{bmatrix}
& \ge  - 4\left[ \q L_\varepsilon(z,v) \right]^{1/2} \cdot \left[ \sum_{j=1}^m |u^T Q_j h|^2 \right]^{1/2} \\
& \quad  - 2 (\norm{y + \varepsilon}_{\infty}^{1/2} \varepsilon^{-1/2} - 1) \sum_{j=1}^m |u^T Q_j v + z^T Q_j h |^2. 
\end{align*}
\end{lemma}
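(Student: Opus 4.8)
The statement to prove is Lemma~\ref{l: Hessian bound}, the upper and lower bounds on the Hessian quadratic form $\left[\begin{smallmatrix} s \\ \conj s\end{smallmatrix}\right]^* \nabla^2 \q L_\varepsilon(z,v) \left[\begin{smallmatrix} s \\ \conj s\end{smallmatrix}\right]$ with $s = (u,h)$. The approach is a direct computation of the relevant Wirtinger Hessian blocks of $\q L_\varepsilon$ in the general form \eqref{eq: loss general}, followed by algebraic regrouping and elementary inequalities.

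Let me think about the structure. We write $\q L_\varepsilon(z,v) = \sum_{j=1}^m \phi_j(z,v)$ where $\phi_j(z,v) = (\sqrt{|z^T Q_j v|^2 + \varepsilon} - \sqrt{y_j + \varepsilon})^2$. Setting $b_j := z^T Q_j v$ and $a_j := \sqrt{|b_j|^2 + \varepsilon}$, we have $\phi_j = (a_j - \sqrt{y_j+\varepsilon})^2 = |b_j|^2 + \varepsilon - 2\sqrt{y_j+\varepsilon}\, a_j + (y_j+\varepsilon)$. So $\q L_\varepsilon$ is a sum of a genuinely quadratic-in-$b_j$ part (hence quartic in $(z,v)$, but with a nice bilinear structure) plus $-2\sqrt{y_j+\varepsilon}\,\sqrt{|b_j|^2+\varepsilon}$, the only "hard" term. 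For the quadratic piece $\sum_j |b_j|^2$, the second-order change under $(z,v)\mapsto(z+u,v+h)$ is exactly computable: $b_j$ becomes $b_j + (u^T Q_j v + z^T Q_j h) + u^T Q_j h$, so the Hessian quadratic form of $\sum_j |b_j|^2$ contributes (up to the correct factor matching the $2\RE(\cdots)$ convention) precisely $2\sum_j |u^T Q_j v + z^T Q_j h|^2 + \text{(cross terms with } u^T Q_j h)$. The cross terms between the first-order increment $u^T Q_j v + z^T Q_j h$ and the second-order increment $u^T Q_j h$ appear in both the quadratic part and the square-root part; I would want them to partially cancel. This is the kind of cancellation that forces the particular combination of $\q L_\varepsilon^{1/2}$ and $\left(\sum |u^T Q_j h|^2\right)^{1/2}$ via Cauchy--Schwarz.

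Concretely, the key steps in order: (1) Compute $\nabla_z\phi_j$, $\nabla_v\phi_j$ using the chain rule, obtaining terms with the factor $\lambda_j := 1 - \sqrt{y_j+\varepsilon}/a_j$ (matching the gradient formulas already in the paper). (2) Compute the four Hessian subblocks $\nabla^2_{z,z}$, $\nabla^2_{v,z}$, $\nabla^2_{\conj z, z}$, $\nabla^2_{\conj v, z}$ (and by the conjugation symmetry \eqref{eq: conj rule} the rest follow), differentiating $\lambda_j$ as well — note $\partial \lambda_j/\partial(\cdot)$ produces terms with $\sqrt{y_j+\varepsilon}/a_j^3$. (3) Assemble $\left[\begin{smallmatrix} s \\ \conj s\end{smallmatrix}\right]^* \nabla^2 \q L_\varepsilon \left[\begin{smallmatrix} s \\ \conj s\end{smallmatrix}\right]$ via \eqref{eq: quadratic simple L}, and collect terms by powers. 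The upshot should be an identity of the form
\[
\tfrac{1}{2}\left[\begin{smallmatrix} s \\ \conj s\end{smallmatrix}\right]^* \nabla^2 \q L_\varepsilon \left[\begin{smallmatrix} s \\ \conj s\end{smallmatrix}\right]
= \sum_{j=1}^m \Bigl[ |c_j|^2 + \lambda_j\bigl( |c_j|^2 + 2\RE(\conj b_j\, (u^T Q_j h)) \bigr) - \tfrac{\sqrt{y_j+\varepsilon}}{a_j^3}\bigl(\RE(\conj b_j c_j)\bigr)^2 \Bigr]
\]
(or something close to this), where $c_j := u^T Q_j v + z^T Q_j h$. (4) Bound each piece: the last term (with the cube) is $\le 0$ for the upper bound and is bounded below using $|\RE(\conj b_j c_j)| \le |b_j||c_j| \le a_j|c_j|$ so that $\sqrt{y_j+\varepsilon}/a_j^3 \cdot (\RE(\conj b_j c_j))^2 \le \sqrt{y_j+\varepsilon}/a_j \cdot |c_j|^2 \le (\sqrt{y_j+\varepsilon}/\sqrt\varepsilon)|c_j|^2$; the $\lambda_j$-term satisfies $0 \le \lambda_j \le 1$ when $a_j \ge \sqrt{y_j+\varepsilon}$ and $\lambda_j \ge 1 - \sqrt{y_j+\varepsilon}/\sqrt\varepsilon$ always; the cross term $\sum_j 2\lambda_j\RE(\conj b_j (u^T Q_j h))$ is handled by $|\lambda_j \conj b_j| = |a_j - \sqrt{y_j+\varepsilon}|\cdot|b_j|/a_j \le |a_j - \sqrt{y_j+\varepsilon}| = \sqrt{\phi_j}$, followed by Cauchy--Schwarz over $j$: $\sum_j \sqrt{\phi_j}\,|u^T Q_j h| \le (\sum_j \phi_j)^{1/2}(\sum_j |u^T Q_j h|^2)^{1/2} = \q L_\varepsilon^{1/2}(\sum_j|u^T Q_j h|^2)^{1/2}$. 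Multiplying back by $2$ gives the stated bounds with the constant $4$.

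\textbf{Main obstacle.} The bookkeeping in steps (2)--(3): correctly differentiating the factor $\lambda_j = 1 - \sqrt{y_j+\varepsilon}(|b_j|^2+\varepsilon)^{-1/2}$ with respect to $z$ and $\conj z$ (and $v$, $\conj v$), keeping track of which terms land in the $\nabla^2_{z,z}$ block versus the $\nabla^2_{\conj z,z}$ block — since $b_j = z^T Q_j v$ is holomorphic in $z$ but $|b_j|^2$ is not — and then verifying that after assembling \eqref{eq: quadratic simple L} everything collapses to the clean form above with exactly the coefficient that makes the Cauchy--Schwarz step produce the constant $4$ and the $\varepsilon^{-1/2}$ dependence in the lower bound. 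I would organize this by first doing the scalar case $m=1$, $d=1$ to get the pattern of the identity, then promoting to vectors/matrices $Q_j$ by linearity, and only at the end invoking the two elementary inequalities ($\lambda_j \in [1 - \sqrt{y_j+\varepsilon}/\sqrt\varepsilon,\, 1]$ and Cauchy--Schwarz). The algebra is routine but error-prone; the conceptual content is entirely in identifying that the dangerous quartic cross terms reorganize into $\lambda_j$-weighted bilinear terms plus a manifestly signed cubic remainder.
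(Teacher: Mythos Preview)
Your approach is essentially the paper's: compute the Wirtinger Hessian blocks of each summand, assemble them via \eqref{eq: quadratic simple L}, and bound the resulting expression using elementary inequalities together with Cauchy--Schwarz on the cross term. The paper short-cuts the single-variable blocks $\nabla^2_{z,z}$, $\nabla^2_{\conj z,z}$, $\nabla^2_{v,v}$, $\nabla^2_{\conj v,v}$ by quoting the phase-retrieval computations of \cite{Xu.2018} and then only computes the cross blocks $\nabla^2_{v,z}$, $\nabla^2_{\conj v,z}$ by hand; you propose doing everything from scratch. That is a purely cosmetic difference.

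There is, however, a concrete error in your tentative identity that makes step~(4) go backwards. With $b_j=z^TQ_jv$, $c_j=u^TQ_jv+z^TQ_jh$, $d_j=u^TQ_jh$, $a_j=\sqrt{|b_j|^2+\varepsilon}$, $\lambda_j=1-\sqrt{y_j+\varepsilon}/a_j$, the correct assembly of the paper's \eqref{eq: Hessian top left} and \eqref{eq: Hessian top right} gives
\[
\tfrac{1}{2}\begin{bmatrix} s \\ \conj s \end{bmatrix}^{*}\nabla^2\q L_\varepsilon(z,v)\begin{bmatrix} s \\ \conj s \end{bmatrix}
= \sum_{j=1}^m \lambda_j\bigl[\,|c_j|^2 + 2\RE(\conj b_j d_j)\,\bigr] \;+\; \sum_{j=1}^m \frac{\sqrt{y_j+\varepsilon}}{a_j^{3}}\,\bigl(\RE(\conj b_j c_j)\bigr)^{2},
\]
where the last equality uses $|b_j|^2|c_j|^2+\RE(b_j^2\conj c_j^2)=2(\RE(\conj b_j c_j))^2$. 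So (i) there is no standalone $|c_j|^2$ outside the $\lambda_j$, and (ii) the sign on the cubic remainder is $+$, not $-$. That remainder is \emph{nonnegative}, which is exactly opposite to what your step~(4) assumes. For the upper bound you therefore cannot drop it; instead, the same inequality you already wrote, $(\RE(\conj b_j c_j))^2\le|b_j|^2|c_j|^2\le a_j^2|c_j|^2$, bounds it above by $(1-\lambda_j)|c_j|^2$, and this combines with $\lambda_j|c_j|^2$ to give exactly $|c_j|^2$. For the lower bound the remainder is simply dropped and $\lambda_j\ge 1-\sqrt{y_j+\varepsilon}/\sqrt\varepsilon$ yields the coefficient $-(\,\|y+\varepsilon\|_\infty^{1/2}\varepsilon^{-1/2}-1\,)$. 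Your Cauchy--Schwarz treatment of $2\sum_j\lambda_j\RE(\conj b_j d_j)$ via $|\lambda_j b_j|\le|a_j-\sqrt{y_j+\varepsilon}|=\sqrt{\phi_j}$ is correct and is precisely the paper's \eqref{eq: gradient bound later}. With this sign correction, your plan and the paper's proof coincide.
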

\begin{proof}
In order to reduce technical computations, we make use of the results for phase retrieval \cite{Xu.2018}. That is, let us consider supplementary functions 
\begin{equation*}\label{eq: function fa}
f_a(z) := \sum_{j = 1}^m \left[ \sqrt{ |a_j^*z|^2 + \varepsilon} - \sqrt{y_j + \varepsilon} \right]^2 
\quad \text{and} \quad 
f_b(v) := \sum_{j = 1}^m \left[ \sqrt{ |b_j^* v|^2 + \varepsilon} - \sqrt{y_j + \varepsilon} \right]^2.
\end{equation*}
Note that $\q L_\varepsilon(z,v) = f_a(z)$ with $a_j = \conj{Q_j v}$ and $\q L_{\varepsilon}(z,v) = f_b(v)$ with $b_j = \conj{Q_j^T z}$. Hence, we can use the computations in \cite[pp.26-27]{Xu.2018} to obtain
\begin{align*}
\nabla_z \q L_\varepsilon(z,v) & = \nabla_z f_a(z) = \sum_{j=1}^m \left[1 - \frac{\sqrt{y_j + \varepsilon}}{\sqrt{|z^T Q_j v|^2 + \varepsilon}} \right] z^T Q_j v \cdot \conj{Q_j v}, \\
\nabla_v \q L_\varepsilon(z,v) & = \nabla_v f_b(v) = \sum_{j=1}^m \left[1 - \frac{\sqrt{y_j + \varepsilon}}{\sqrt{|z^T Q_j v|^2 + \varepsilon}} \right] z^T Q_j v \cdot \conj{Q_j^T z},
\end{align*}
and
\begin{align*}
\nabla_{z,z} \q L_\varepsilon(z,v) & = \nabla_{z,z} f_a(z) = \sum_{j=1}^m \left[1 - \frac{\varepsilon \sqrt{y_j + \varepsilon}}{(|z^T Q_j v|^2 + \varepsilon)^{3/2}} - \frac{ \sqrt{y_j + \varepsilon} |z^T Q_j v|^2 }{2(|z^T Q_j v|^2 + \varepsilon)^{3/2}} \right] \conj{Q_j v} (Q_j v)^T, \\
\nabla_{\conj z,z} \q L_\varepsilon(z,v) & = \nabla_{\conj z,z} f_a(z) = \sum_{j=1}^m \frac{\sqrt{y_j + \varepsilon} (z^T Q_j v)^2}{2(|z^T Q_j v|^2 + \varepsilon)^{3/2}}  \conj{Q_j v} (\conj{Q_j v})^T, \\
\nabla_{v,v} \q L_\varepsilon(z,v) & = \nabla_{v,v} f_b(v) = \sum_{j=1}^m \left[1 - \frac{\varepsilon \sqrt{y_j + \varepsilon}}{(|z^T Q_j v|^2 + \varepsilon)^{3/2}} - \frac{ \sqrt{y_j + \varepsilon} |z^T Q_j v|^2 }{2(|z^T Q_j v|^2 + \varepsilon)^{3/2}} \right] \conj{Q_j^T z} (Q_j^T z)^T, \\
\nabla_{\conj v,v} \q L_\varepsilon(z,v) & = \nabla_{\conj v,v} f_b(v) = \sum_{j=1}^m \frac{\sqrt{y_j + \varepsilon} (z^T Q_j v)^2}{2(|z^T Q_j v|^2 + \varepsilon)^{3/2}}  \conj{Q_j^T z} (\conj{Q_j^T z})^T.
\end{align*}
Now, we have to compute the cross-variable derivatives using the product and the chain rules. More precisely,
\begin{align*}
\nabla_{v,z} \q L_\varepsilon(z,v) 
& = \frac{\partial }{\partial v} \nabla_{z} \q L_\varepsilon(z,v) 
= \sum_{j=1}^m \left[1 - \frac{\sqrt{y_j + \varepsilon}}{\sqrt{|z^T Q_j v|^2 + \varepsilon} } \right] \conj{Q_j v} \cdot \frac{\partial (z^T Q_j v)}{\partial v} \\
& \quad \quad  - \sum_{j=1}^m \sqrt{y_j + \varepsilon} (z^T Q_j v) \conj{Q_j v} \cdot \frac{\partial (|z^T Q_j v|^2 + \varepsilon)^{-1/2} }{\partial |z^T Q_j v|^2 + \varepsilon} \frac{\partial (\conj{z^T Q_j v}  z^T Q_j v)}{\partial v} \\
& = \sum_{j=1}^m \left[1 - \frac{\sqrt{y_j + \varepsilon}}{\sqrt{|z^T Q_j v|^2 + \varepsilon} } + \frac{\sqrt{y_j + \varepsilon} |z^T Q_j v|^2}{2(|z^T Q_j v|^2 + \varepsilon)^{3/2}} \right] \conj{Q_j v}  (Q_j^T z)^T \\
& = \sum_{j=1}^m \left[1 - \frac{\varepsilon \sqrt{y_j + \varepsilon}}{(|z^T Q_j v|^2 + \varepsilon)^{3/2} } - \frac{\sqrt{y_j + \varepsilon} |z^T Q_j v|^2}{2(|z^T Q_j v|^2 + \varepsilon)^{3/2}} \right] \conj{Q_j v}  (Q_j^T z)^T,
\end{align*}
and 
\begin{align*}
\nabla_{\conj{v}, z} \q L_\varepsilon(z,v) 
& = \sum_{j=1}^m \left[1 - \frac{\sqrt{y_j + \varepsilon}}{\sqrt{|z^T Q_j v|^2 + \varepsilon} } \right] z^T Q_j v   \cdot \frac{\partial(\conj{Q_j v})}{\partial \conj v} \\
& \quad \quad  - \sum_{j=1}^m \sqrt{y_j + \varepsilon} (z^T Q_j v) \conj{Q_j v} \cdot \frac{\partial (|z^T Q_j v|^2 + \varepsilon)^{-1/2} }{\partial |z^T Q_j v|^2 + \varepsilon} \frac{ \partial( z^T Q_j v \conj{z^T Q_j v}) }{\partial \conj v} \\
& = \sum_{j=1}^m \left[1 - \frac{\sqrt{y_j + \varepsilon}}{\sqrt{|z^T Q_j v|^2 + \varepsilon} } \right] (z^T Q_j v) \,  \conj{Q_j}
+ \sum_{j=1}^m \frac{\sqrt{y_j + \varepsilon} (z^T Q_j v)^2 }{2(|z^T Q_j v|^2 + \varepsilon)^{3/2} } \conj{Q_j v} (\conj{Q_j^T z})^T.
\end{align*}
The last two derivatives are computed analogously,
\begin{align*}
\nabla_{z,v} \q L_\varepsilon(z,v) 
& = \sum_{j=1}^m \left[1 - \frac{\varepsilon \sqrt{y_j + \varepsilon}}{(|z^T Q_j v|^2 + \varepsilon)^{3/2} } - \frac{\sqrt{y_j + \varepsilon} |z^T Q_j v|^2}{2(|z^T Q_j v|^2 + \varepsilon)^{3/2}} \right] \conj{Q_j^T z}  (Q_j v)^T, \\
\nabla_{\conj{z},v} \q L_\varepsilon(z,v) 
& = \sum_{j=1}^m \left[1 - \frac{\sqrt{y_j + \varepsilon}}{\sqrt{|z^T Q_j v|^2 + \varepsilon} } \right] (z^T Q_j v) \, \conj{Q_j}^T
+ \sum_{j=1}^m \frac{\sqrt{y_j + \varepsilon} (z^T Q_j v)^2 }{2(|z^T Q_j v|^2 + \varepsilon)^{3/2} } \conj{Q_j^T z} (\conj{Q_j v})^T.
\end{align*}
With derivatives computed, we can now turn to the quadratic terms in \eqref{eq: quadratic simple L}. The first summand is given by
\begin{align}
& \begin{bmatrix} u \\ h \end{bmatrix}^* \nabla^2_{(z,v),(z,v)} \q L_\varepsilon(z,v) \begin{bmatrix} u \\ h \end{bmatrix} \nonumber  \\
& \quad \quad = u^* \nabla_{z,z} \q L_\varepsilon(z,v) u + u^* \nabla_{v,z} \q L_\varepsilon(z,v) h
+ h^* \nabla_{z,v} \q L_\varepsilon(z,v) u + h^* \nabla_{v,v} \q L_\varepsilon(z,v) h \nonumber \\
& \quad \quad = \sum_{j=1}^m \left[1 - \frac{\varepsilon \sqrt{y_j + \varepsilon}}{(|z^T Q_j v|^2 + \varepsilon)^{3/2} } - \frac{\sqrt{y_j + \varepsilon} |z^T Q_j v|^2}{2(|z^T Q_j v|^2 + \varepsilon)^{3/2}} \right] \times \nonumber \\
& \quad \quad \quad \quad \quad \quad \times \left[ |u^T Q_j v|^2 + \conj{u^T Q_j v} \cdot z^T Q_j h +  \conj{ z^T Q_j h} \cdot u^T Q_j v + |z^T Q_j h|^2 \right] \nonumber \\
& \quad \quad = \sum_{j=1}^m \left[1 - \frac{\varepsilon \sqrt{y_j + \varepsilon}}{(|z^T Q_j v|^2 + \varepsilon)^{3/2} } - \frac{\sqrt{y_j + \varepsilon} |z^T Q_j v|^2}{2(|z^T Q_j v|^2 + \varepsilon)^{3/2}} \right] |u^T Q_j v + z^T Q_j h |^2. \label{eq: Hessian top left}
\end{align}
For the second summand, we analogously obtain
\begin{align}
& \RE \left( \begin{bmatrix} u \\ h \end{bmatrix}^* \nabla^2_{\conj{(z,v)},(z,v)} \q L_\varepsilon(z,v) \begin{bmatrix} \conj u \\ \conj h \end{bmatrix} \right) \nonumber \\
& \quad \quad  = \RE( u^* \nabla_{\conj z,z} \q L_\varepsilon(z,v) \conj u + u^* \nabla_{\conj v,z} \q L_\varepsilon(z,v) \conj h
+ h^* \nabla_{\conj z,v} \q L_\varepsilon(z,v) \conj u + h^* \nabla_{\conj v,v} \q L_\varepsilon(z,v) \conj h ) \nonumber \\
&  \quad \quad  =  2 \sum_{j=1}^m \left[1 - \frac{\sqrt{y_j + \varepsilon}}{\sqrt{|z^T Q_j v|^2 + \varepsilon} } \right] \RE( z^T Q_j v \cdot \conj{u^T Q_j h} ) \nonumber \\
& \quad \quad \quad \quad + \RE \left( \sum_{j=1}^m \frac{\sqrt{y_j + \varepsilon} (z^T Q_j v)^2 }{2(|z^T Q_j v|^2 + \varepsilon)^{3/2} } \left[ (\conj{u^T Q_j v})^2  + 2 \conj{u^T Q_j v \cdot z^T Q_j h} + (\conj{z^T Q_j h})^2 \right] \right) \nonumber \\
& \quad \quad  =  2 \sum_{j=1}^m \left[1 - \frac{\sqrt{y_j + \varepsilon}}{\sqrt{|z^T Q_j v|^2 + \varepsilon} } \right] \RE( z^T Q_j v \cdot \conj{u^T Q_j h} ) \label{eq: Hessian top right}\\
& \quad \quad \quad \quad + \sum_{j=1}^m \frac{\sqrt{y_j + \varepsilon}  }{2(|z^T Q_j v|^2 + \varepsilon)^{3/2} } \RE \left(  (z^T Q_j v)^2 (\conj{u^T Q_j v + z^T Q_j h} )^2 \right)  \nonumber
\end{align}
To complete the upper bound in \eqref{eq: quadratic simple L}, we make the following observations. Firstly, the second summands in \eqref{eq: Hessian top left} are nonpositive, 
\[
- \sum_{j=1}^m \frac{\varepsilon \sqrt{y_j + \varepsilon}}{(|z^T Q_j v|^2 + \varepsilon)^{3/2} } |u^T Q_j v + z^T Q_j h |^2 \le 0.
\]
Secondly, the third summands in \eqref{eq: Hessian top left} and the second sum in \eqref{eq: Hessian top right} added together admit
\[
\RE \left(  (z^T Q_j v)^2 (\conj{u^T Q_j v + z^T Q_j h} )^2 \right) - |z^T Q_j v|^2 |\conj{u^T Q_j v + z^T Q_j h} |^2 
\le 0.
\]
Since $\sqrt{y_j + \varepsilon} /(|z^T Q_j v|^2 + \varepsilon)^{3/2} > 0$, the above inequality applies for the whole sum. Thirdly, we bound the first sum in \eqref{eq: Hessian top right} by the Cauchy-Schwarz inequality as
\begin{align}
& \left| \sum_{j=1}^m \left[1 - \frac{\sqrt{y_j + \varepsilon}}{\sqrt{|z^T Q_j v|^2 + \varepsilon} } \right] \RE( z^T Q_j v \cdot \conj{u^T Q_j h} ) \right| \nonumber \\
& \quad \quad \quad \quad \le \sum_{j=1}^m \left| 1 - \frac{\sqrt{y_j + \varepsilon}}{\sqrt{|z^T Q_j v|^2 + \varepsilon} } \right| |z^T Q_j v| |u^T Q_j h| \nonumber \\
& \quad \quad \quad \quad \le \left[ \sum_{j=1}^m \left| \sqrt{|z^T Q_j v|^2 + \varepsilon} - \sqrt{y_j + \varepsilon}\right|^2 \frac{|z^T Q_j v|^2}{|z^T Q_j v|^2 + \varepsilon} \right]^{1/2} \cdot \left[ \sum_{j=1}^m |u^T Q_j h|^2 \right]^{1/2} \nonumber \\
& \quad \quad \quad \quad \le \left[ \q L_\varepsilon(z,v) \right]^{1/2} \cdot \left[ \sum_{j=1}^m |u^T Q_j h|^2 \right]^{1/2}. \label{eq: gradient bound later}
\end{align}
Combining these inequalities with \eqref{eq: quadratic simple L} yields
\[
\begin{bmatrix} s \\ \conj s \end{bmatrix}^*
\nabla^2 \q L_\varepsilon(z,v)
\begin{bmatrix} s \\ \conj s \end{bmatrix}
\le 2 \sum_{j=1}^m  |u^T Q_j v + z^T Q_j h |^2 + 4 \left[ \q L_\varepsilon(z,v) \right]^{1/2} \cdot \left[ \sum_{j=1}^m |u^T Q_j h|^2 \right]^{1/2}.
\]
For the lower bound, we return to \eqref{eq: Hessian top right}. The absolute value of the first summand was already bounded in \eqref{eq: gradient bound later}. The second summand can be bounded using
\[
\RE \left(  (z^T Q_j v)^2 (\conj{u^T Q_j v + z^T Q_j h} )^2 \right)
\ge - |z^T Q_j v|^2 |u^T Q_j v + z^T Q_j h|^2.
\]
This gives
\begin{align*}
\RE \left( \begin{bmatrix} u \\ h \end{bmatrix}^* \nabla^2_{\conj{(z,v)},(z,v)} \q L_\varepsilon(z,v) \begin{bmatrix} \conj u \\ \conj h \end{bmatrix} \right)  
& \ge - 2 \left[ \q L_\varepsilon(z,v) \right]^{1/2} \cdot \left[ \sum_{j=1}^m |u^T Q_j h|^2 \right]^{1/2} \\
& \quad - \sum_{j=1}^m \frac{\sqrt{y_j + \varepsilon} |z^T Q_j v|^2  }{2(|z^T Q_j v|^2 + \varepsilon)^{3/2} }  |u^T Q_j v + z^T Q_j h|^2.
\end{align*}
Thus, by \eqref{eq: quadratic simple L} and \eqref{eq: Hessian top left}, we obtain
\begin{align*}
& \begin{bmatrix} s \\ \conj s \end{bmatrix}^*
\nabla^2 \q L_\varepsilon(z,v)
\begin{bmatrix} s \\ \conj s \end{bmatrix}
\ge  - 4\left[ \q L_\varepsilon(z,v) \right]^{1/2} \cdot \left[ \sum_{j=1}^m |u^T Q_j h|^2 \right]^{1/2} \\
& \quad \quad \quad \quad + 2 \sum_{j=1}^m \left[1 - \frac{\varepsilon \sqrt{y_j + \varepsilon}}{(|z^T Q_j v|^2 + \varepsilon)^{3/2} } - \frac{\sqrt{y_j + \varepsilon} |z^T Q_j v|^2}{(|z^T Q_j v|^2 + \varepsilon)^{3/2}} \right] |u^T Q_j v + z^T Q_j h |^2 
\end{align*}
The last step is to note that
\begin{align*}
& 1 - \frac{\varepsilon \sqrt{y_j + \varepsilon}}{(|z^T Q_j v|^2 + \varepsilon)^{3/2} } - \frac{\sqrt{y_j + \varepsilon} |z^T Q_j v|^2}{(|z^T Q_j v|^2 + \varepsilon)^{3/2}}
= 1 - \frac{\sqrt{y_j + \varepsilon}}{( |z^T Q_j v|^2 + \varepsilon)^{1/2} } 
\ge 1 - \frac{\sqrt{y_j + \varepsilon}}{\sqrt{\varepsilon} }.
\end{align*}
The term $\varepsilon^{-1/2} (y_j + \varepsilon)^{1/2} - 1$ is nonnegative, which gives 
\begin{align*}
\begin{bmatrix} s \\ \conj s \end{bmatrix}^*
\nabla^2 \q L_\varepsilon(z,v)
\begin{bmatrix} s \\ \conj s \end{bmatrix}
& \ge  - 4\left[ \q L_\varepsilon(z,v) \right]^{1/2} \cdot \left[ \sum_{j=1}^m |u^T Q_j h|^2 \right]^{1/2} \\
& \quad  - 2 (\norm{y + \varepsilon}_{\infty}^{1/2} \varepsilon^{-1/2} - 1) \sum_{j=1}^m |u^T Q_j v + z^T Q_j h |^2. 
\end{align*}
\end{proof}

With \Cref{l: Hessian bound} we are able to derive the descent lemma for $\q J$.

\begin{proof}[Proof of \Cref{col: descent lemma J}.]
Let us start by deriving a descent lemma for $\q L_\varepsilon$ with $\varepsilon > 0$ first. Then, $\q L_\varepsilon$ is twice differentiable and the result can be derived from the second-order Taylor expansion \eqref{eq: Taylor} combined with the bound on the Hessian provided by \Cref{l: Hessian bound}. More precisely, with $s=(u,h)$
\begin{align*}
\q L_\varepsilon(z+u, v+h) 
& = \q L_\varepsilon(z, v) 
+ 2 \RE( u^* \nabla_z \q L_\varepsilon(z,v) ) 
+ 2 \RE(h^*\nabla_v \q L_\varepsilon(z,v)) \\
& \quad + \begin{bmatrix} s \\ \conj s \end{bmatrix}^*
\int_0^1 (1 - t) \nabla^2 \q L_\varepsilon(z + t u, v +th) d t \begin{bmatrix} s \\ \conj s \end{bmatrix}.
\end{align*}
Let us focus on the second-order term for now. By \Cref{l: Hessian bound}, we get
\begin{align}
\begin{bmatrix} s \\ \conj s \end{bmatrix}^*
& \int_0^1 (1 - t) \nabla^2 \q L_\varepsilon(z + t u, v +th) d t \begin{bmatrix} s \\ \conj s \end{bmatrix} \\
& \quad \quad \quad \quad \le 2 \int_0^1 (1 - t) \sum_{j=1}^m  |u^T Q_j (v + th) + (z+tu)^T Q_j h |^2 dt \nonumber \\
& \quad \quad \quad \quad + 4 \int_0^1 (1 - t)\left[ \q L_\varepsilon(z+tu,v+th) \right]^{1/2} \cdot \left[ \sum_{j=1}^m |u^T Q_j h|^2 \right]^{1/2} dt. \label{eq: descent lemma tech 1}
\end{align}
The next step is to simplify integrands. By \eqref{eq: matrix Q definition} and properties of the discrete Fourier Transform, for any $z,v \in \bb C^d$ we have
\begin{align}
& \sum_{j=1}^m  |z^T Q_j v|^2 
= \sum_{r \in \q R} \sum_{k = 1}^d |F(z \circ S_r v)_k|^2
= \sum_{r \in \q R} \norm{F(z \circ S_r v)}_2^2 \nonumber \\
& \quad = d \sum_{r \in \q R} \norm{z \circ S_r v}_2^2
= d \sum_{r \in \q R} \sum_{k = 1}^d |z_k|^2 |v_{k-r}|^2
= d \sum_{k = 1}^d |z_k|^2 \sum_{r \in \q R}  |v_{k-r}|^2
\le d  \norm{z}_2^2 \norm{v}_2^2. \label{eq: bilinear norm bound}
\end{align}
Consequently, using $|\alpha + \beta|^2 \le 2|\alpha|^2 + 2 |\beta|^2$, the first integral in \eqref{eq: descent lemma tech 1} simplifies to
\begin{align*}
& 2\int_0^1 (1 - t) \sum_{j=1}^m  |u^T Q_j (v + th) + (z+tu)^T Q_j h |^2 dt \\
& \quad \quad \quad \quad \le 4 d \norm{u}_2^2 \int_0^1 (1 - t) \norm{v + th}_2^2 dt 
+ 4 d \norm{h}_2^2 \int_0^1 (1 - t) \norm{z + tv}_2^2 dt.  
\end{align*}
Furthermore, both of the obtained integrals can be bounded as
\begin{align}
& \int_0^1 (1 - t) \norm{v + th}_2^2 dt 
 = \int_0^1 (1 - t) \norm{v}_2^2 + 2(1 - t) t\RE(v^*h) + (1-t) t^2 \norm{h}_2^2 dt \label{eq: descent lemma tech 2}\\
& = \tfrac{1}{2}\norm{v}_2^2 + \tfrac{1}{3}\RE(v^*h) + \tfrac{1}{12}\norm{h}_2^2 \le \tfrac{1}{2}\norm{v}_2^2 +\tfrac{1}{6} \norm{v}_2^2 + \tfrac{1}{6}\norm{h}_2^2 + \tfrac{1}{12}\norm{h}_2^2 
= \tfrac{2}{3}\norm{v}_2^2 + \tfrac{1}{4} \norm{h}_2^2, \nonumber
\end{align}
which gives 
\begin{align}
& 2 \int_0^1 (1 - t) \sum_{j=1}^m  |u^T Q_j (v + th) + (z+tu)^T Q_j h |^2 dt \nonumber \\
& \quad \quad \quad \quad \quad \quad \le 4 d \norm{u}_2^2 [\, \tfrac{2}{3}\norm{v}_2^2 + \tfrac{1}{4} \norm{h}_2^2 \,] 
+ 4 d \norm{h}_2^2 [\, \tfrac{2}{3}\norm{z}_2^2 + \tfrac{1}{4} \norm{u}_2^2 \,]. \label{eq: descent lemma tech 3}
\end{align}
For the second integral in \eqref{eq: descent lemma tech 1}, note that by inequalities
\[
|\alpha - \beta|^2 \le \alpha^2 + \beta^2 \text{ for } \alpha, \beta \ge 0,
\ \text{ and } \ 
|\sqrt{\alpha^2 + \gamma^2} - \sqrt{\beta^2 + \gamma^2}| \le |\alpha - \beta|, \text{ for } \alpha,\beta,\gamma \in \bb R,
\]
the values of $\q L_\varepsilon(z, v)$ are bounded by 
\begin{align}
\q L_\varepsilon(z, v) 
& = \sum_{j=1}^m \left| \sqrt{|z^T Q_j v|^2 + \varepsilon} - \sqrt{y_j + \varepsilon}\right|^2 
\le \sum_{j=1}^m \left| |z^T Q_j v| - \sqrt{y_j} \right|^2 \nonumber \\
& \le \sum_{j=1}^m |z^T Q_j v|^2 + \norm{y}_1
\le d \norm{z}_2^2 \norm{v}_2^2 + \norm{y}_1 \label{eq: L bound}
\end{align}
Hence, the triangle inequality with \eqref{eq: descent lemma tech 2} gives
\begin{align*}
& 4 \int_0^1 (1 - t)\left[ \q L_\varepsilon(z+tu,v+th) \right]^{1/2} \cdot \left[ \sum_{j=1}^m |u^T Q_j h|^2 \right]^{1/2} dt \\
& \quad \quad \le 4 \sqrt{d} \norm{u}_2 \norm{h}_2 \int_0^1 (1 - t) \sqrt{d \norm{z +tu}_2^2 \norm{v + th}_2^2 + \norm{y}_1} dt \\
& \quad \quad \le 4 \sqrt{d} \norm{u}_2 \norm{h}_2 \int_0^1 (1 - t) \sqrt{d} \norm{z +tu}_2 \norm{v + th}_2  +  \norm{y}_1^{1/2} (1 - t) dt \\
& \quad \quad \le  [ \norm{u}_2^2 + \norm{h}_2^2] \int_0^1 (1 - t) d [ \norm{z +tu}_2^2 + \norm{v + th}_2^2] + 2\sqrt{d} \norm{y}_1^{1/2} (1 - t) dt \\
& \quad \quad \le [ \norm{u}_2^2 + \norm{h}_2^2] \left[ d \, [ \tfrac{2}{3}\norm{z}_2^2 + \tfrac{1}{4} \norm{u}_2^2 + \tfrac{2}{3}\norm{v}_2^2 + \tfrac{1}{4} \norm{h}_2^2] + \sqrt{d} \norm{y}_1^{1/2} \right].
\end{align*}
Returning to \eqref{eq: descent lemma tech 1}, the above inequality and \eqref{eq: descent lemma tech 3} combined yield
\begin{align*}
& \begin{bmatrix} s \\ \conj s \end{bmatrix}^* \int_0^1 (1 - t) \nabla^2 \q L_\varepsilon(z + t u, v +th) d t \begin{bmatrix} s \\ \conj s \end{bmatrix} \\
& \quad \quad \quad \quad \le  d \norm{u}_2^2 \left[ \tfrac{10}{3} \norm{v}_2^2 + \tfrac{5}{4} \norm{h}_2^2 + \tfrac{2}{3} \norm{z}_2^2 + \tfrac{1}{4} \norm{u}_2^2 + \norm{y / d}_1^{1/2} \right] \\
& \quad \quad \quad \quad + d \norm{h}_2^2 \left[ \tfrac{10}{3} \norm{z}_2^2 + \tfrac{5}{4} \norm{u}_2^2 + \tfrac{2}{3} \norm{v}_2^2 + \tfrac{1}{4} \norm{h}_2^2 + \norm{y / d}_1^{1/2} \right].
\end{align*}
Substituting this into the Taylor approximation concludes the proof for $\varepsilon>0$,
\begin{align*}
\q L_\varepsilon(z+u, v + h) 
& \le \q L_\varepsilon(z, v) 
+ 2 \RE( u^* \nabla_z \q L_\varepsilon(z,v) ) 
+ 2 \RE(h^*\nabla_v \q L_\varepsilon(z,v)) \\
& \quad + d \norm{u}_2^2  \left[ \tfrac{10}{3} \norm{v}_2^2 + \tfrac{5}{4} \norm{h}_2^2 + \tfrac{2}{3} \norm{z}_2^2  +  \tfrac{1}{4}\norm{u}_2^2 + \norm{y/d}_1^{1/2} \right] \\
& \quad + d \norm{h}_2^2  \left[ \tfrac{10}{3} \norm{z}_2^2 + \tfrac{5}{4} \norm{u}_2^2 + \tfrac{2}{3} \norm{v}_2^2 +  \tfrac{1}{4}\norm{h}_2^2 + \norm{y/d}_1^{1/2} \right]. 
\end{align*}
The case $\varepsilon =0$ follows by taking the limit $\varepsilon \to 0+$.

Turning to $\q J$, we extend the above bound using Lemma 2.5 of \cite{Filbir.2022}. Let $\q T(z,v) := \alpha_T \norm{z}_2^2 + \beta_T \norm{v}_2^2$.
By Lemma 2.5  of \cite{Filbir.2022}, for $s = (u,h)$, we have 
\[
\begin{bmatrix} s \\ \conj s \end{bmatrix}^*
\nabla^2 \q T
\begin{bmatrix} s \\ \conj s \end{bmatrix}
\le 2 \alpha_T \norm{u}_2^2 + 2 \beta_T \norm{h}_2^2.
\]
Applying this error bound to the second-order Taylor expansion \eqref{eq: Taylor} of $\q T$ gives
\begin{align*}
&\q T(z+u, v+h) 
= \q T(z, v) 
+ 2 \RE( u^* \nabla_z \q T(z,v) ) 
+ 2 \RE(h^*\nabla_v \q T(z,v)) \\
& \quad \quad \quad \quad \quad \quad \quad + \begin{bmatrix} s \\ \conj s \end{bmatrix}^*
\int_0^1 (1 - t) \nabla^2 \q T(z + t u, v +th) d t \begin{bmatrix} s \\ \conj s
\end{bmatrix} \\
& \quad \quad \quad \quad \le \q T(z, v) 
+ 2 \RE( u^* \nabla_z \q T(z,v) ) 
+ 2 \RE(h^*\nabla_v \q T(z,v))
+ \alpha_T \norm{u}_2^2 +  \beta_T \norm{h}_2^2.
\end{align*}
Since $\q J = \q L_\varepsilon + \q T$, combining two inequalities together yields the desired result. 
\end{proof}

Note that for our specific $Q_j$ the inequality \eqref{eq: bilinear norm bound} holds and, in general for arbitrary $Q_j$, the sum $\sum_{j=1}^m |z^T Q_j v|^2$ can always be bounded as
\[
\sum_{j=1}^m |z^T Q_j v|^2 
\le \sum_{j=1}^m \norm{z}_2^2 \norm{Q_j}^2 \norm{v}_2^2
\le \norm{z}_2^2 \norm{v}_2^2 \sum_{j=1}^m \norm{Q_j}^2,
\]
where $\norm{\cdot}$ denotes the spectral norm of a matrix.

\subsection{Convergence of gradient descent}

In this section, we derive convergence result for the gradient descent algorithm \eqref{eq: gradient descent}.

\begin{proof}[Proof of \Cref{thm: convergence gradient}.]
Using \Cref{col: descent lemma J} with $z = z^t$, $v = v^t$, $u = \mu_t \nabla_z \q J(z^t,v^t)$, $h =\nu_t \nabla_v \q J(z^t,v^t)$ and the definition \eqref{eq: B} of $B$. We get
\begin{align*}
& \q J(z^{t+1}, v^{t+1}) 
 \le \q J(z^t, v^t) 
- 2 \mu_t \norm{ \nabla_z \q J(z^t,v^t) }_2^2 
- 2 \nu_t \norm{ \nabla_v \q J(z^t,v^t)}_2^2 \\
& \quad \quad \quad \quad + \mu_t \norm{ \nabla_z \q J(z^t,v^t) }_2^2  \left[ \tfrac{1}{3}\mu_t B(z^t,v^t)  +   \tfrac{d}{4} \mu_t^3 \norm{ \nabla_z \q J(z^t,v^t)}_2^2 + \tfrac{5d}{4} \mu_t \nu_t^2 \norm{\nabla_v \q J(z^t,v^t)}_2^2 \right] \\
& \quad \quad \quad \quad + \nu_t \norm{ \nabla_v \q J(z^t,v^t)}_2^2  \left[ \tfrac{1}{3} \nu_t B(z^t,v^t) + \tfrac{5 d}{4} \nu_t \mu_t^2 \norm{ \nabla_z \q J(z^t,v^t) }_2^2  + \tfrac{d}{4} \nu_t^3 \norm{ \nabla_v \q J(z^t,v^t)}_2^2  \right].
\end{align*}
Recall that the step sizes $\mu_t$ and $\nu_t$ are as in \eqref{eq: step size gradient}.
The first case of the minimums yields 
\begin{align*}
\mu_t B(z^t,v^t) \le 1, &  \quad
\nu_t  B(z^t,v^t) \le 1.
\end{align*}
The second case leads to 
\begin{align*}
\tfrac{1}{4} d \mu_t^3 \norm{ \nabla_z \q J(z^t,v^t)}_2^2 \le \tfrac{1}{15} \le \tfrac{1}{3}
\quad \text{ and } \quad 
\tfrac{1}{4} d \nu_t^3 \norm{ \nabla_v \q J(z^t,v^t)}_2^2 \le \tfrac{1}{15} \le \tfrac{1}{3}, 
\end{align*}
From the last inequalities and the last case for $\mu_t$ we deduce that
\begin{align*}
\tfrac{5}{4} d \mu_t \nu_t^2 \norm{\nabla_v \q J(z^t,v^t)}_2^2 
& = (\tfrac{5}{4} d)^{1/3} \mu_t \norm{\nabla_v \q J(z^t,v^t)}_2^{2/3} \cdot \left(\tfrac{5}{4} d \nu_t^3 \norm{\nabla_v \q J(z^t,v^t)}_2^{2} \right)^{2/3} 
\le \tfrac{1}{3},
\end{align*}
and, analogously, $\tfrac{5}{4} \mu_t^2 \nu_t \norm{ \nabla_z \q J(z^t,v^t) }_2^2 \le 1/3$. Combining these inequalities gives the standard descent lemma for gradient descent \cite[Lemma 5.7]{Beck.2017},
\begin{equation}\label{eq: objective decreases gradient}
\q J(z^{t+1}, v^{t+1}) 
\le \q J(z^t, v^t) 
- \mu_t \norm{ \nabla_z \q J(z^t,v^t) }_2^2 
- \nu_t \norm{ \nabla_v \q J(z^t,v^t)}_2^2
\le \q J(z^t, v^t).
\end{equation}
To show the convergence of the gradient, we assume that $\mu_t$ and $\nu_t$ are equal to the minimums in \eqref{eq: step size gradient} and sum up the above inequality for $t =0, \ldots, T-1$, 
\[
\sum_{t=0}^{T-1} \mu_t \norm{ \nabla_z \q J(z^t,v^t) }_2^2 + \nu_t \norm{ \nabla_v \q J(z^t,v^t)}_2^2 
\le \q J(z^0, v^0) - \q J(z^T, v^T) 
\le \q J(z^0, v^0) -  \q J^{\inf}.
\]
This implies that the series $\sum_{t=0}^{\infty} \mu_t \norm{ \nabla_z \q J(z^t,v^t) }_2^2 + \nu_t \norm{ \nabla_v \q J(z^t,v^t)}_2^2$ are convergent and its summand 
\begin{equation}\label{eq: tech gradient 1}
\mu_t \norm{ \nabla_z \q J(z^t,v^t) }_2^2 + \nu_t \norm{ \nabla_v \q J(z^t,v^t)}_2^2 \to 0 \text{ as } t \to \infty.
\end{equation}
Let us show that the gradient 
\begin{equation}\label{eq: def gradient J}
\nabla \q J(z^t,v^t) 
= \begin{bmatrix}
\nabla_z \q J(z^t,v^t) \\
\nabla_v \q J(z^t,v^t) 
\end{bmatrix},
\quad 
\norm{\nabla \q J(z^t,v^t)}_2^2
= \norm{ \nabla_z \q J(z^t,v^t) }_2^2  + \norm{ \nabla_v \q J(z^t,v^t)}_2^2
\end{equation}
vanishes. First, for $\alpha_T,\beta_T >0$ we observe that   
\[
\alpha_T \norm{z^t}_2^2 \le \q L_\varepsilon(z^t,v^t) + \alpha_T \norm{z^t}_2^2 + \beta_T \norm{v^t}_2^2 
= \q J(z^{t}, v^{t}) 
\le \q J(z^0, v^0),
\]  
so that $\norm{z^t}_2^2 \le \q J(z^0, v^0)/\alpha_T$ and, analogously, $\norm{v^t}_2^2 \le \q J(z^0, v^0)/\beta_T$.

Now, we are ready to prove the convergence of the gradient to zero. Let us first consider the case $\norm{ \nabla_z \q J(z^t,v^t) }_2 \ge \norm{ \nabla_v \q J(z^t,v^t)}_2$, which gives
\[
\norm{ \nabla_z \q J(z^t,v^t) }_2^2 
\ge \tfrac{1}{2} \left( \norm{ \nabla_z \q J(z^t,v^t) }_2^2  + \norm{ \nabla_v \q J(z^t,v^t)}_2^2 \right)
= \tfrac{1}{2} \norm{\nabla \q J(z^t,v^t)}_2^2,
\]
\[
\norm{ \nabla_z \q J(z^t,v^t) }_2^{4/3}
= \left( \norm{ \nabla_z \q J(z^t,v^t) }_2^2 \right)^{2/3} 
\ge \tfrac{1}{2^{2/3}} \norm{\nabla \q J(z^t,v^t)}_2^{4/3}.
\]
and 
\[
\frac{\norm{ \nabla_z \q J(z^t,v^t) }_2^{2}}{\norm{ \nabla_v \q J(z^t,v^t) }_2^{2/3}}
\ge \norm{ \nabla_z \q J(z^t,v^t) }_2^{4/3}
\ge \tfrac{1}{2^{2/3}} \norm{\nabla \q J(z^t,v^t)}_2^{4/3}.
\]
Then, we have 
\begin{align*}
\mu_t \norm{ \nabla_z \q J(z^t,v^t) }_2^2 
& \ge \min \left\{ \frac{\norm{ \nabla_z \q J(z^t,v^t) }_2^2}{3 d \left[ (\tfrac{10}{3} \alpha_T^{-1} + \tfrac{10}{3} \beta_T^{-1}) \q J(z^0, v^0) + \norm{y/d}_1^{1/2}\right]  + \max\{ \alpha_T, \beta_T \}  }, \right. \\
& \quad \quad \quad \quad  \left. \frac{4^{\frac{1}{3}} \norm{ \nabla_z \q J(z^t,v^t)}_2^{\frac{4}{3}} }{(15d)^{\frac{1}{3}}}, 
\frac{4^{\frac{1}{3}} \norm{ \nabla_z \q J(z^t,v^t) }_2^2 }{(15d)^{\frac{1}{3}} \norm{\nabla_v \q J(z^t,v^t)}_2^{2/3}} \right\} \\ 
& \ge C^{-1} \min \left\{ \norm{\nabla \q J(z^t,v^t)}_2^2, \norm{\nabla \q J(z^t,v^t)}_2^{4/3} \right\},
\end{align*} 
where $C$ is defined as in \eqref{eq: c gradient}. In the opposite case, $\norm{ \nabla_z \q J(z^t,v^t) }_2 < \norm{ \nabla_v \q J(z^t,v^t)}_2$, exactly the same inequality is satisfied for $\nu_t \norm{ \nabla_v \q J(z^t,v^t)}_2^2$.
Thus, in any case, we get
\begin{align*}
\mu_t \norm{ \nabla_z \q J(z^t,v^t) }_2^2 + \nu_t \norm{ \nabla_v \q J(z^t,v^t)}_2^2
\ge 
C^{-1} \min \left\{ \norm{\nabla \q J(z^t,v^t)}_2^2, \norm{\nabla \q J(z^t,v^t)}_2^{4/3} \right\}.
\end{align*}
Combining it with \eqref{eq: tech gradient 1} gives,
\[
C^{-1} \min \left\{ \norm{\nabla \q J(z^t,v^t)}_2^2, \norm{\nabla \q J(z^t,v^t)}_2^{4/3} \right\} \to 0 \text{ as } t \to \infty,
\]
which is only possible if $\norm{\nabla \q J(z^t,v^t)}_2 \to 0$ as $t \to \infty$. Finally, to quantify the convergence rate we note that
\begin{align*}
& \min \left\{ \min_{t = 0, \ldots, T-1} \norm{\nabla \q J(z^t,v^t)}_2^2, \left[ \min_{t = 0, \ldots, T-1} \norm{\nabla \q J(z^t,v^t)}_2^2 \right]^{2/3} \right\} \\
& \quad = \min_{t = 0, \ldots, T-1} \min \left\{ \norm{\nabla \q J(z^t,v^t)}_2^2, \norm{\nabla \q J(z^t,v^t)}_2^{4/3} \right\} \\
& \quad \le C \min_{t = 0, \ldots, T-1} \left[\mu_t \norm{ \nabla_z \q J(z^t,v^t) }_2^2 + \nu_t \norm{ \nabla_v \q J(z^t,v^t)}_2^2 \right] \\
& \quad \le \frac{C}{T} \sum_{t=0}^{T-1} \left[\mu_t \norm{ \nabla_z \q J(z^t,v^t) }_2^2 + \nu_t \norm{ \nabla_v \q J(z^t,v^t)}_2^2 \right]
\le \frac{C}{T} [\q J(z^0, v^0) -  \q J^{\inf}].
\end{align*}
Thus, using that for all $a,b \ge 0$ the inequality
\[
\min\{a, a^{2/3}\} \le b 
\ \Leftrightarrow \ 
a \le b \text{ or } a \le b^{3/2} 
\ \Leftrightarrow \ 
a \le \max\{ b, b^{3/2}\}
\]
holds, we obtain
\[
\min_{t = 0, \ldots, T-1} \norm{\nabla \q J(z^t,v^t)}_2^2 \le \max \left\{ C T^{-1} [\q J(z^0, v^0) -  \q J^{\inf}], C^{3/2} T^{-3/2} [\q J(z^0, v^0) -  \q J^{\inf}]^{3/2} \right\}.
\]
\end{proof}
\subsection{Convergence of stochastic gradient descent}

In this section, we derive \Cref{thm: convergence stochastic gradient}. The proof follows the steps of Theorem 3.7 of \cite{Melnyk.2022b}. However, the difference between the standard descent lemma and \Cref{col: descent lemma J} affects the proof, which has to be carefully adjusted. 

Let us start with a brief overview of the proof. The idea is similar to \Cref{thm: convergence gradient}. That is, we would like to show that the sequence $\{ \q J(z^t, v^t) \}_{t \ge 0}$ is convergent and the sequence $\{ \norm{\nabla \q J(z^t, v^t)}_2 \}_{t \ge 0}$ vanishes. Yet, as these sequences are determined by indices $\{r^{t,k} \}_{t = 0, k= 1}^{\infty, K}$, they are random. 

Our goal is to apply the next result by Robbins and Siegmund \cite{Robbins.1971} with random sequences 
\[
Y_t = \q J(z^t,v^t) - \q J^{\inf}, 
\quad \text{and} \quad
X_t = \mu_t \norm{\nabla_z \q J(z^t,v^t)}_2^2 + \nu_t \norm{\nabla_z \q J(z^t,v^t)}_2^2.
\]
\begin{theorem}[{\cite[Theorem 1]{Robbins.1971}}]\label{thm: near-supermartingale convergence}
Let $\{X_t\}_{t \ge 0}$, $\{Y_t\}_{t \ge 0}$ and $\{Z_t\}_{t \ge 0}$ be three sequences of nonnegative random variables that are adapted to a filtration $\{ \q F_t \}$. Let $\eta_t$ be a sequence of nonnegative real numbers such that $\sum_{t=0}^\infty \eta_t < \infty$. Suppose that 
\begin{equation}\label{eq: near-supermartingale condition}
\bb E[Y_{t+1}\,|\, \q F_t \,] \le (1 + \eta_t) Y_t - X_t + Z_t, \quad \text{for all } t \ge 0,
\end{equation}
and $\sum_{t = 0}^\infty Z_t < \infty$ almost surely. Then, $\sum_{t = 0}^\infty X_t < \infty$ almost surely and $Y_t$ converges almost surely. 
\end{theorem}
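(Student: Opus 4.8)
The plan is to reduce \Cref{thm: near-supermartingale convergence} to the classical convergence theorem for supermartingales that are bounded from below (see, e.g., \cite{Athreya.2006}). The first step exploits $\sum_{t\ge 0}\eta_t<\infty$: the deterministic products $c_t := \prod_{s=0}^{t-1}(1+\eta_s)^{-1}$ (with $c_0=1$) form a nonincreasing sequence converging to a strictly positive limit $c_\infty$, so that $0<c_\infty\le c_t\le 1$ for all $t$. Multiplying \eqref{eq: near-supermartingale condition} by $c_{t+1}$ and using the identity $c_{t+1}(1+\eta_t)=c_t$, the rescaled variables $\tilde Y_t:=c_tY_t\ge 0$ satisfy
\[
\bb E[\tilde Y_{t+1}\,|\,\q F_t] \le \tilde Y_t - c_{t+1}X_t + c_{t+1}Z_t, \quad t\ge 0.
\]
The purpose of the rescaling is to absorb the multiplicative factor $(1+\eta_t)$, leaving only additive corrections.

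Next I would introduce the candidate process $U_t := \tilde Y_t + \sum_{s=0}^{t-1}c_{s+1}(X_s-Z_s)$. Since $X_s,Z_s$ are $\q F_s$-measurable and the $c_{s+1}$ are deterministic, the increment added in passing from $U_t$ to $U_{t+1}$ is $\q F_t$-measurable, and combining this with the displayed inequality gives $\bb E[U_{t+1}\,|\,\q F_t]\le U_t$, so $\{U_t\}$ is a supermartingale. The obstacle is that $U_t$ is not bounded below by a deterministic constant: one only has $U_t\ge -\sum_{s=0}^{t-1}c_{s+1}Z_s$, and this lower bound is merely almost surely finite, so the supermartingale convergence theorem cannot be applied to $U_t$ directly.

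I would remove this obstacle by localization. Write $A_t:=\sum_{s=0}^{t-1}c_{s+1}Z_s$, a nondecreasing $\q F_t$-measurable sequence with almost surely finite limit $A_\infty\le\sum_{s}Z_s<\infty$. For $\beta>0$ set $\tau_\beta:=\inf\{t\ge 0: A_{t+1}>\beta\}$; crucially, $A_{t+1}$ is already $\q F_t$-measurable, because every $Z_s$ with $s\le t$ is $\q F_t$-measurable, so $\tau_\beta$ is a genuine stopping time for $\{\q F_t\}$. The stopped process $U_{t\wedge\tau_\beta}$ is again a supermartingale, and the definition of $\tau_\beta$ forces $A_{t\wedge\tau_\beta}\le\beta$, giving the deterministic bound $U_{t\wedge\tau_\beta}\ge -A_{t\wedge\tau_\beta}\ge -\beta$. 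Hence $U_{t\wedge\tau_\beta}$ converges almost surely. Because $\bb P(\tau_\beta=\infty)=\bb P(A_\infty\le\beta)\to 1$ as $\beta\to\infty$ and $U_t=U_{t\wedge\tau_\beta}$ on $\{\tau_\beta=\infty\}$, taking the union over $\beta\in\bb N$ shows that $U_t$ converges almost surely.

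The final step unwinds the definitions. Since $A_t$ converges (to $A_\infty$) and $U_t=\tilde Y_t+\sum_{s=0}^{t-1}c_{s+1}X_s-A_t$ converges, the nonnegative sequence $W_t:=\tilde Y_t+\sum_{s=0}^{t-1}c_{s+1}X_s$ converges almost surely. As $\tilde Y_t\ge 0$ and the partial sums $\sum_{s=0}^{t-1}c_{s+1}X_s$ are nondecreasing, they are dominated by $W_t$ and hence bounded, so $\sum_{s\ge 0}c_{s+1}X_s<\infty$; using $c_{s+1}\ge c_\infty>0$ this yields $\sum_{s\ge 0}X_s<\infty$ almost surely. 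Subtracting the now-convergent sum from $W_t$ shows $\tilde Y_t$ converges, and dividing by $c_t\to c_\infty>0$ gives the almost sure convergence of $Y_t$. The main obstacle is precisely the random lower bound encountered in the second paragraph; the stopping-time localization, made legitimate by the $\q F_t$-measurability of $A_{t+1}$, is the device that converts the almost-supermartingale into genuine bounded-below supermartingales to which the convergence theorem applies.
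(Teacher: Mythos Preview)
The paper does not prove \Cref{thm: near-supermartingale convergence}; it is quoted verbatim from \cite[Theorem 1]{Robbins.1971} and used as a black box in the proof of \Cref{l: convergence stochastic first}. There is therefore no ``paper's own proof'' to compare against.

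That said, your argument is correct and is essentially the classical proof of the Robbins--Siegmund lemma. The rescaling by $c_t=\prod_{s<t}(1+\eta_s)^{-1}$ to absorb the multiplicative factor, the construction of the compensated process $U_t$, and the localization via the stopping time $\tau_\beta$ built from the predictable increasing process $A_t$ are exactly the standard steps. Your observation that $A_{t+1}$ is already $\q F_t$-measurable (because each $Z_s$, $s\le t$, is $\q F_t$-measurable) is the key point that makes $\tau_\beta$ a genuine stopping time and guarantees $A_{t\wedge\tau_\beta}\le\beta$; this is handled correctly. The only omission is the routine integrability check needed for $\{U_t\}$ to be a supermartingale in the first place (one needs $\bb E Y_0<\infty$ and $\bb E Z_t<\infty$ for each $t$, which then propagates to $\bb E Y_t<\infty$ and $\bb E X_t<\infty$ via \eqref{eq: near-supermartingale condition}); this is a standard implicit hypothesis in the Robbins--Siegmund setting and does not affect the validity of your outline.
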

Thus, the proof is split into three parts. In the first part, we derive \eqref{eq: near-supermartingale condition} based on \Cref{col: descent lemma J}. In the second part, we apply \Cref{thm: near-supermartingale convergence} to obtain convergence of $\{ \q J(z^t, v^t) \}_{t \ge 0}$. Then, we deduce that with a proper choice of step sizes, there exists a subsequence $\{t_k\}_{k \ge 0}$ such that $\norm{\nabla \q J(z^{t_k},v^{t_k})}_2 \to 0$ as $k \to \infty$. The third part follows ideas of \cite{Orabona.2020} to deduce $\norm{\nabla \q J(z^{t},v^{t})}_2 \to 0$ using Lipschitz continuity of the gradient. 

Throughout this section, steps of the proof are separated into lemmas. For each, the requirements on the step sizes are stated separately to highlight how they change as the proof progresses. 

\subsubsection{First half: preparations for \Cref{thm: near-supermartingale convergence}}
Let us start by establishing the inequality \eqref{eq: near-supermartingale condition}. The first step is to combine \Cref{col: descent lemma J} with updates \eqref{eq: stochastic gradient descent}. 
\begin{lemma}\label{l: descent lemma stochastic first}
Let $\varepsilon, \alpha_T, \beta_T \ge 0$ and $0 \le \theta < 1$. If the step sizes satisfy
\begin{align} 
\mu_t, \nu_t & \le \min \left\{ B^{-\frac{1}{1 - \theta}}(z^t,v^t), \left( \tfrac{15d}{4} \right)^{-\frac{1}{3-\theta}} \norm{ g_z (z^t,v^t)}_2^{-\frac{2}{3 - \theta}},  
\left( \tfrac{15 d}{4} \right)^{-\frac{1}{3-\theta}} \norm{g_v (z^t,v^t)}_2^{-\frac{2}{3 - \theta}} \right\},  \label{eq: step sizes stochastic descent} 
\end{align}
then we have
\begin{align*}
\q J(z^{t+1},v^{t+1}) 
& \le \q J(z^t, v^t) 
- 2 \mu_t \RE( g_z^*(z^t,v^t) \nabla_z \q J(z^t,v^t) ) 
- 2 \nu_t \RE( g_v^*(z^t,v^t) \nabla_v \q J(z^t,v^t) ) \\
& \quad + \mu_t^{1 + \theta} \norm{g_z (z^t,v^t)}_2^2 
+ \nu_t^{1 + \theta} \norm{g_v (z^t,v^t)}_2^2.
\end{align*}
\end{lemma}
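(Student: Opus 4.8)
The plan is to apply the descent lemma \Cref{col: descent lemma J} with the substitution $z = z^t$, $v = v^t$, $u = -\mu_t g_z^t$, $h = -\nu_t g_v^t$, where I abbreviate $g_z^t := g_z(z^t,v^t)$ and $g_v^t := g_v(z^t,v^t)$, and then show that the step-size conditions \eqref{eq: step sizes stochastic descent} are exactly what is needed to absorb all the fourth-order and $B$-weighted quadratic terms into the leading $\mu_t^{1+\theta}\norm{g_z^t}_2^2 + \nu_t^{1+\theta}\norm{g_v^t}_2^2$ budget. Plugging in, the right-hand side becomes
\begin{align*}
\q J(z^{t+1},v^{t+1})
&\le \q J(z^t,v^t) - 2\mu_t\RE((g_z^t)^*\nabla_z\q J(z^t,v^t)) - 2\nu_t\RE((g_v^t)^*\nabla_v\q J(z^t,v^t)) \\
&\quad + \mu_t^2\norm{g_z^t}_2^2\Big[\alpha_T + d\big(\tfrac{10}{3}\norm{v^t}_2^2 + \tfrac{5}{4}\nu_t^2\norm{g_v^t}_2^2 + \tfrac{2}{3}\norm{z^t}_2^2 + \tfrac{1}{4}\mu_t^2\norm{g_z^t}_2^2 + \norm{y/d}_1^{1/2}\big)\Big] \\
&\quad + \nu_t^2\norm{g_v^t}_2^2\Big[\beta_T + d\big(\tfrac{10}{3}\norm{z^t}_2^2 + \tfrac{5}{4}\mu_t^2\norm{g_z^t}_2^2 + \tfrac{2}{3}\norm{v^t}_2^2 + \tfrac{1}{4}\nu_t^2\norm{g_v^t}_2^2 + \norm{y/d}_1^{1/2}\big)\Big],
\end{align*}
and the task reduces to showing the two bracketed contributions are each at most $\mu_t^{\theta-1}$, respectively $\nu_t^{\theta-1}$, so that $\mu_t^2\norm{g_z^t}_2^2\cdot\mu_t^{\theta-1} = \mu_t^{1+\theta}\norm{g_z^t}_2^2$ and likewise for $v$.

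The key estimates, in order: first, from $\mu_t \le B^{-1/(1-\theta)}(z^t,v^t)$ I get $\mu_t^{1-\theta}B(z^t,v^t) \le 1$, and since $B(z^t,v^t) = 3d[\tfrac{10}{3}\norm{z^t}_2^2 + \tfrac{10}{3}\norm{v^t}_2^2 + \norm{y/d}_1^{1/2}] + 3\max\{\alpha_T,\beta_T\}$ by \eqref{eq: B}, this single quantity dominates $3$ times the sum $\alpha_T + d(\tfrac{10}{3}\norm{v^t}_2^2 + \tfrac{2}{3}\norm{z^t}_2^2 + \norm{y/d}_1^{1/2})$ (using $\tfrac{2}{3} \le \tfrac{10}{3}$ and $\alpha_T \le \max\{\alpha_T,\beta_T\}$), hence that part of the $z$-bracket is at most $\tfrac{1}{3}\mu_t^{\theta-1}$. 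Second, from $\mu_t \le (\tfrac{15d}{4})^{-1/(3-\theta)}\norm{g_z^t}_2^{-2/(3-\theta)}$ I get $\tfrac{15d}{4}\mu_t^{3-\theta}\norm{g_z^t}_2^2 \le 1$; rearranging, $d\mu_t^2\cdot\tfrac14\mu_t^2\norm{g_z^t}_2^2 = \tfrac{d}{4}\mu_t^{4}\norm{g_z^t}_2^2 \le \tfrac{1}{15}\mu_t^{1+\theta} \le \tfrac13\mu_t^{\theta-1}$ after multiplying by the appropriate power (here I'll need $\mu_t^{1+\theta}\le\mu_t^{\theta-1}$, i.e. $\mu_t\le 1$, which follows since all the minimands in \eqref{eq: step sizes stochastic descent} with the last-case-convention bound the step size below $1$ — or more carefully, $\tfrac14 d\mu_t^4\norm{g_z^t}_2^2 = \tfrac{1}{15}(\tfrac{15d}{4}\mu_t^{3-\theta}\norm{g_z^t}_2^2)\mu_t^{1+\theta} \le \tfrac{1}{15}\mu_t^{1+\theta}$, and I want this $\le\tfrac13\mu_t^{\theta-1}$, equivalently $\mu_t^2 \le 5$, which holds). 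Third, the genuine cross term $\tfrac{5d}{4}\mu_t^2\nu_t^2\norm{g_z^t}_2^2\norm{g_v^t}_2^2$ is handled by the geometric-mean trick used in the proof of \Cref{thm: convergence gradient}: write it as $(\tfrac{15d}{4}\mu_t^{3-\theta}\norm{g_z^t}_2^2)^{a}(\tfrac{15d}{4}\nu_t^{3-\theta}\norm{g_v^t}_2^2)^{b}$ times a power of $\mu_t$ with $a,b$ chosen so the bound $\le 1$ on each factor yields a residual $\le \tfrac13\mu_t^{\theta-1}$ — concretely, using $\mu_t^2\nu_t^2 = \mu_t^{1/2+\theta/2}\nu_t^{1/2+\theta/2}\cdot(\mu_t^{3-\theta})^{1/2-\text{something}}\dots$; the cleanest route is to bound $\norm{g_v^t}_2^2 \le (\tfrac{15d}{4})^{-1}\nu_t^{-(3-\theta)}$ from the third step-size condition, giving $\tfrac{5d}{4}\mu_t^2\nu_t^2\norm{g_z^t}_2^2\norm{g_v^t}_2^2 \le \tfrac{5d}{4}\mu_t^2\nu_t^2\norm{g_z^t}_2^2(\tfrac{15d}{4})^{-1}\nu_t^{-(3-\theta)} = \tfrac13\mu_t^2\nu_t^{\theta-1}\norm{g_z^t}_2^2$, and then applying $\tfrac{15d}{4}\mu_t^{3-\theta}\norm{g_z^t}_2^2\le1$ to convert $\mu_t^2\norm{g_z^t}_2^2 \le (\tfrac{15d}{4})^{-1}\mu_t^{\theta-1} $, so the whole thing is $\le \tfrac{1}{3}\cdot\tfrac{4}{15d}\mu_t^{\theta-1}\nu_t^{\theta-1}$ — I'll need to track constants so the three thirds sum to at most $1$, possibly requiring $d\ge1$ which holds.

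Summing the three contributions in the $z$-bracket gives at most $\mu_t^{\theta-1}(\tfrac13+\tfrac13+\tfrac13) = \mu_t^{\theta-1}$ (with the minor care above on the mixed $\mu_t,\nu_t$ powers — I'll just bound each step size by $1$ where needed to make the exponents consistent), and multiplying by $\mu_t^2\norm{g_z^t}_2^2$ yields $\mu_t^{1+\theta}\norm{g_z^t}_2^2$; the $v$-bracket is symmetric. Substituting back into the displayed Taylor-type inequality gives exactly the claimed bound. The main obstacle I anticipate is purely bookkeeping: keeping the fractional powers $\mu_t^{3-\theta}$, $\mu_t^{1+\theta}$, $\mu_t^{\theta-1}$ straight when $\theta\in[0,1)$ and verifying that the numerical constants ($\tfrac{10}{3}$ from $B$ versus the $\tfrac{2}{3},\tfrac{5}{4},\tfrac14$ in the descent lemma, and the three $\tfrac13$'s) genuinely close with room to spare for all admissible $\theta$ and all $d\ge1$ — there is no conceptual difficulty, but the inequalities must be chained in exactly the right order, and the convention $\min\{a,b/0\}=a$ from the notation section is what keeps the bound meaningful when some gradient norm vanishes.
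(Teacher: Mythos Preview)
Your overall plan matches the paper's: apply \Cref{col: descent lemma J} with $u=-\mu_t g_z^t$, $h=-\nu_t g_v^t$, then show the two second-order brackets are dominated by $\mu_t^{1+\theta}\|g_z^t\|_2^2$ and $\nu_t^{1+\theta}\|g_v^t\|_2^2$ via the step-size conditions. Your treatment of the $B$-part and the own-variable fourth-order part is fine. The gap is in the cross term.

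After your ``cleanest route'' you arrive at $\tfrac{4}{45d}\,\mu_t^{\theta-1}\nu_t^{\theta-1}$ and propose to finish by ``bounding each step size by $1$''. This goes the wrong way: since $\theta-1<0$, the inequality $\mu_t\le 1$ gives $\mu_t^{\theta-1}\ge 1$, so the expression is \emph{unbounded} as the step sizes shrink, and it cannot be absorbed into either $\mu_t^{1+\theta}\|g_z^t\|_2^2$ or $\nu_t^{1+\theta}\|g_v^t\|_2^2$. The mistake originates one line earlier: once you replace $\mu_t^2\|g_z^t\|_2^2$ by an upper bound you have thrown away the $\|g_z^t\|_2^2$ factor that the target $\mu_t^{1+\theta}\|g_z^t\|_2^2$ still carries, so the comparison can no longer close.

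The fix is exactly the geometric-mean trick you mentioned and then abandoned, but with the correct inputs: for the $z$-bracket cross term you must use the bound on $\mu_t$ against $\|g_v^t\|$ (not against $\|g_z^t\|$). After factoring out $\mu_t^{1+\theta}\|g_z^t\|_2^2$, what remains is $\tfrac{5d}{4}\mu_t^{1-\theta}\nu_t^2\|g_v^t\|_2^2$, and writing
\[
\mu_t^{1-\theta}\nu_t^2\|g_v^t\|_2^2
=\bigl(\mu_t^{3-\theta}\|g_v^t\|_2^2\bigr)^{\frac{1-\theta}{3-\theta}}
\bigl(\nu_t^{3-\theta}\|g_v^t\|_2^2\bigr)^{\frac{2}{3-\theta}}
\le \Bigl(\tfrac{4}{15d}\Bigr)^{\frac{1-\theta}{3-\theta}+\frac{2}{3-\theta}}=\tfrac{4}{15d},
\]
since both $\mu_t$ and $\nu_t$ are bounded by $(\tfrac{15d}{4})^{-1/(3-\theta)}\|g_v^t\|_2^{-2/(3-\theta)}$; hence $\tfrac{5d}{4}\mu_t^{1-\theta}\nu_t^2\|g_v^t\|_2^2\le\tfrac13$. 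This is precisely the $\theta$-version of the step used in the proof of \Cref{thm: convergence gradient}, and together with your two other thirds it yields the claim.
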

\begin{proof}
The proof is analogous to the proof of \Cref{thm: convergence gradient} from the beginning to the equation \eqref{eq: objective decreases gradient}.
\end{proof}

Comparing the result of \Cref{l: descent lemma stochastic first} with \eqref{eq: near-supermartingale condition}, the next step is to compute the conditional expectation, which would require the following properties of the stochastic gradient.

\begin{lemma}\label{l: stochastic gradient properties 2}
Consider the sequences $\{ z^t \}_{t \ge 0}$ and $\{ v^t \}_{t \ge 0}$ defined by \eqref{eq: stochastic gradient descent} with stochastic gradients $g(z^t, v^t)$ given by \eqref{eq: stochastic gradient}. Then, we have
$ \bb E[ \, g(z^t,v^t) \, | \, \q F_t \, ] = \nabla \q J(z^t, v^t)$, 
and
\begin{align*}
\bb E[ \, \norm{g_z(z^t,v^t)}_2^2 \, | \, \q F_t \, ] 
& \le 
\gamma_t^z [\q J(z^t, v^t) - \q J^{\inf}] + \rho \norm{\nabla_z \q J(z^t, v^t)}_2^2 + \delta_t^z, \\
\bb E[ \, \norm{g_v(z^t,v^t)}_2^2 \, | \, \q F_t \, ] 
& \le 
\gamma_t^v [\q J(z^t, v^t) - \q J^{\inf}] + \rho \norm{\nabla_v \q J(z^t, v^t)}_2^2 + \delta_t^v,
\end{align*}
where $\q J^{\inf}$ is defined in \eqref{eq: J inf} and 
\begin{align*}
& \gamma_t^z := \frac{d \norm{v^t}_2^2}{K \min_{r \in \q R} p_r} + \frac{\alpha_T}{K},
\quad
\rho := 1 - \frac{1}{K},
\quad
\delta_t^z := \gamma_t^z [ \q J^{\inf} - \sum_{r \in \q R} \inf_{z,v \in \bb C^d} \q J_{r}(z,v)],
\\
& \gamma_t^v := \frac{d \norm{z^t}_2^2}{K \min_{r \in \q R} p_r} + \frac{\beta_T}{K},
\quad
\delta_t^v:= \gamma_t^v [ \q J^{\inf} - \sum_{r \in \q R} \inf_{z,v \in \bb C^d} \q J_{r}(z,v)],
\end{align*}
\end{lemma}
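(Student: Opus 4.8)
The plan is to prove the mean identity first and then the two second‑moment bounds, the $v$‑block being obtained from the $z$‑block by interchanging the roles of $z$ and $v$ and of $\alpha_T,\beta_T$. For the mean: since $z^t,v^t$ are $\q F_t$‑measurable while the indices $r^{t,1},\dots,r^{t,K}$ used to form $g(z^t,v^t)$ are independent of $\q F_t$ and i.i.d.\ $\sim p$, the conditional expectation collapses to an ordinary expectation over these fresh indices, and $\bb E_{r\sim p}[p_r^{-1}\nabla\q J_r(z^t,v^t)]=\sum_{r\in\q R}\nabla\q J_r(z^t,v^t)=\nabla\q J(z^t,v^t)$ by \eqref{eq: Jr definition}; averaging $K$ unbiased terms keeps the mean. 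For the second moments I would first split bias and variance: conditionally on $\q F_t$, $g_z(z^t,v^t)$ is the average of $K$ i.i.d.\ copies of $p_r^{-1}\nabla_z\q J_r(z^t,v^t)$, each with conditional mean $\nabla_z\q J(z^t,v^t)$, so
\[
\bb E\!\left[\,\norm{g_z(z^t,v^t)}_2^2\,\middle|\,\q F_t\,\right]
=\Big(1-\tfrac1K\Big)\norm{\nabla_z\q J(z^t,v^t)}_2^2+\tfrac1K\sum_{r\in\q R}\tfrac1{p_r}\,\norm{\nabla_z\q J_r(z^t,v^t)}_2^2 .
\]
This forces $\rho=1-1/K$ and reduces the claim to bounding the last sum by $\gamma_t^z\big[\q J(z^t,v^t)-\sum_r\inf\q J_r\big]$, which is exactly $\gamma_t^z[\q J-\q J^{\inf}]+\delta_t^z$.

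The core estimate is a per‑block gradient‑domination inequality
\[
\norm{\nabla_z\q J_r(z,v)}_2^2\le\big(d\norm v_2^2+\alpha_T p_r\big)\big(\q J_r(z,v)-\inf\nolimits_{z,v}\q J_r\big),\qquad r\in\q R .
\]
For fixed $v$, $z\mapsto\q J_r(z,v)$ is, up to an additive constant, an amplitude‑type loss $f_a(z)=\sum_j(\sqrt{|a_j^*z|^2+\varepsilon_j}-\sqrt{y_j+\varepsilon_j})^2$: the first $d$ summands are $\q L_{r,\varepsilon}$ with $a_k=\conj{Q_{r,k}v}$, and the Tikhonov piece $\alpha_T p_r\norm z_2^2=\sum_{\ell=1}^d|\sqrt{\alpha_T p_r}\,e_\ell^*z|^2$ is absorbed by appending $d$ further summands with $a_\ell=\sqrt{\alpha_T p_r}\,e_\ell$, $y_\ell=\varepsilon_\ell=0$. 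For such a loss I would use the regularity bound $\norm{\nabla_z f_a(z)}_2^2\le\norm{\sum_j a_ja_j^*}\,\big(f_a(z)-\inf_z f_a\big)$ — the ingredient behind the nonblind amplitude‑flow analyses \cite{Xu.2018,Melnyk.2022b} — which I would establish by writing $\nabla_z f_a(z)=\big(\sum_j a_ja_j^*\big)z-A\tilde c(z)$ with $\tilde c_j(z)$ the phase‑corrected targets, using that for $\varepsilon=0$ one has the identity $f_a(z)=\min_{|t_j|=\sqrt{y_j}}\norm{A^*z-t}_2^2=\norm{A^*z-\tilde c(z)}_2^2$, and reading off both $\norm{\nabla_z f_a(z)}_2^2\le\norm{\sum_j a_ja_j^*}\norm{A^*(z-z^\star)}_2^2$ and $\norm{A^*(z-z^\star)}_2^2\le f_a(z)-\inf_z f_a$ from the Pythagorean decomposition of $A^*z-\tilde c(z)$ along $\im(A^*)$ (the case $\varepsilon>0$ follows by the same computation keeping the $\varepsilon$‑terms, or by letting $\varepsilon\to0+$). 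Since $\inf_z\q J_r(z,v)\ge\inf_{z,v}\q J_r$, the block inequality follows once the constant is evaluated: using $A^*z=F(z\circ S_rv)$ from \eqref{eq: matrix Q definition} and the Fourier orthogonality $\sum_{k=1}^d f^k_\ell\,\conj{f^k_{\ell'}}=d\,\delta_{\ell,\ell'}$ gives $\sum_{k=1}^d a_ka_k^*=d\,\diag(|S_rv|^2)$, hence $\norm{\sum_j a_ja_j^*}\le d\norm{S_rv}_\infty^2+\alpha_T p_r\le d\norm v_2^2+\alpha_T p_r$. The $v$‑block is symmetric, with $b_k=\conj{Q_{r,k}^T z}$, again $B^*v=F(z\circ S_rv)$, and $\sum_k b_kb_k^*=d\,S_{-r}\diag(|z|^2)S_{-r}^*$, giving $\norm{\nabla_v\q J_r(z,v)}_2^2\le(d\norm z_2^2+\beta_T p_r)(\q J_r(z,v)-\inf\q J_r)$.

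Assembling, insert the block bound into the bias–variance identity:
\[
\tfrac1K\sum_{r}\tfrac1{p_r}\norm{\nabla_z\q J_r(z^t,v^t)}_2^2
\le\tfrac1K\sum_r\Big(\tfrac{d\norm{v^t}_2^2}{p_r}+\alpha_T\Big)\big(\q J_r(z^t,v^t)-\inf\q J_r\big)
\le\gamma_t^z\sum_r\big(\q J_r(z^t,v^t)-\inf\q J_r\big),
\]
where the last step uses $p_r\ge\min_{r}p_r$ and the definition of $\gamma_t^z$; since $\sum_r\q J_r=\q J$ and $\sum_r\inf\q J_r$ is a constant, the right‑hand side equals $\gamma_t^z[\q J(z^t,v^t)-\q J^{\inf}]+\gamma_t^z[\q J^{\inf}-\sum_r\inf\q J_r]=\gamma_t^z[\q J(z^t,v^t)-\q J^{\inf}]+\delta_t^z$, and the $v$‑bound is identical. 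The routine parts are the conditioning argument for unbiasedness and the bias–variance identity; the main obstacle is the per‑block gradient‑domination inequality, where two points need care: absorbing the Tikhonov term into the amplitude‑loss template without losing the sharp constant $d\norm v_2^2+\alpha_T p_r$, and — crucially — obtaining the subtraction of $\inf\q J_r$ (rather than just $\q J_r$) on the right‑hand side, since it is precisely this that produces the nonnegative remainders $\delta_t^z,\delta_t^v\ge0$ needed to invoke \Cref{thm: near-supermartingale convergence} later.
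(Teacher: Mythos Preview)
Your overall architecture is exactly the paper's: unbiasedness by direct computation, then the bias--variance identity
\[
\bb E\big[\norm{g_z}_2^2\,\big|\,\q F_t\big]=(1-\tfrac1K)\norm{\nabla_z\q J}_2^2+\tfrac1K\sum_{r}\tfrac1{p_r}\norm{\nabla_z\q J_r}_2^2,
\]
followed by the per-block estimate $\norm{\nabla_z\q J_r(z,v)}_2^2\le(d\norm v_2^2+\alpha_Tp_r)\,[\q J_r(z,v)-\inf\q J_r]$ and the assembly step; the $v$-block is symmetric. The paper cites the bias--variance split as \cite[eq.~(41)]{Khaled.2023} and then obtains the per-block bound in one line by applying \Cref{thm: previous results} to $\q J_r$ (viewed as a copy of $\q J$ with $\q R'=\{r\}$ and $\alpha_T'=\alpha_Tp_r$): the descent inequality $\q J_r(z_+,v)\le\q J_r(z,v)-L_v^{-1}\norm{\nabla_z\q J_r}_2^2$ with $L_v\le d\norm v_2^2+\alpha_Tp_r$ rearranges immediately to the desired bound, and this works uniformly for all $\varepsilon\ge0$.

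Your alternative derivation of the per-block bound via the Pythagorean decomposition of $A^*z-\tilde c(z)$ along $\im(A^*)$ is correct and rather elegant for $\varepsilon=0$: writing $\nabla_z f_a=A(A^*z-\tilde c)$ and using $\ker A=(\im A^*)^\perp$ gives $\norm{\nabla_z f_a}_2^2\le\norm{AA^*}\,\norm{P_{\im A^*}(A^*z-\tilde c)}_2^2=\norm{AA^*}\big(f_a(z)-\norm{P_\perp\tilde c}_2^2\big)$, and since $\inf_z f_a=\min_{|t_j|=\sqrt{y_j}}\norm{P_\perp t}_2^2\le\norm{P_\perp\tilde c}_2^2$, the subtraction of $\inf f_a$ drops out. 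However, for $\varepsilon>0$ your handwave is incomplete: the identity $f_a(z)=\norm{A^*z-\tilde c(z)}_2^2$ fails (one only has $\norm{A^*z-c}_2^2\le f_a(z)$ with $c_j=\sqrt{y_j+\varepsilon}\,(a_j^*z)/\sqrt{|a_j^*z|^2+\varepsilon}$), and the variational characterisation of $\inf_z f_a$ as a projection distance no longer holds, so the Pythagorean route does not obviously yield the $-\inf f_a$ term. Taking $\varepsilon\to0^+$ does not help either, since you need the bound at the fixed $\varepsilon$ entering $\q J$. The clean fix is precisely the paper's: invoke \Cref{thm: previous results}, which already encapsulates the required inequality for every $\varepsilon\ge0$.
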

\begin{proof}
The first inequality is shown by direct computation,
\begin{align*}
\bb E[ \, g(z^t,v^t) \, | \, \q F_t \, ] 
= \frac{1}{K} \sum_{k=1}^K \bb E \left[ \, \frac{\nabla \q J_{ r^{t,k} }(z^t, v^t)}{ p_{r^{t,k}} } \, \bigg| \, \q F_t \, \right]
= \frac{1}{K} \sum_{k=1}^K \sum_{r \in \q R} \nabla \q J_{r}(z^t, v^t) 
= \q J(z^t,v^t).
\end{align*}
For the last two inequalities, we adjust the proof of Proposition 3 in \cite{Khaled.2023}. That is, the equation (41) in \cite{Khaled.2023} gives
\[
\bb E[ \, \norm{g_z(z^t,v^t)}_2^2 \, | \, \q F_t \, ] 
\le 
\rho \norm{\nabla_z \q J(z^t, v^t)}_2^2 + \sum_{r \in \q R} (K p_r)^{-1}\norm{\nabla_z \q J_r (z^t, v^t)}_2^2.
\]  
Note that $\q J_r$ can be seen as a copy of $\q J$ with $\q R^\prime = \{r\}$ and $\alpha_T^\prime = \alpha_T p_r$. Thus, by \Cref{thm: previous results}, we have
\begin{align*}
\norm{\nabla_z \q J_r (z^t, v^t)}_2^2 
& \le ( d \norm{v^t}_2^2 + \alpha_T p_r) [\q J_r(z^t, v^t) - \q J_r(z^{t+1}, v^{t})] \\
& \le ( d \norm{v^t}_2^2 + \alpha_T p_r) [\q J_r(z^t, v^t) - \inf_{z,v \in \bb C^d} \q J_r(z,v)].
\end{align*}
This leads to
\begin{align*}
&\bb E[ \, \norm{g_z(z^t,v^t)}_2^2 \, | \, \q F_t \, ] 
 \le  \rho \norm{\nabla_z \q J(z^t, v^t)}_2^2 
+ \sum_{r \in \q R} \frac{d \norm{v^t}_2^2 + \alpha_T p_r}{K p_r} [\q J_r(z^t, v^t) - \inf_{z,v \in \bb C^d} \q J_r(z,v)] \\
& \quad \quad \quad \quad \le \rho \norm{\nabla_z \q J(z^t, v^t)}_2^2  + \left[ \frac{d \norm{v^t}_2^2}{K \min_{r \in \q R} p_r} + \frac{\alpha_T}{K} \right] [\q J(z^t,v^t) - \sum_{r \in \q R} \inf_{z,v \in \bb C^d} \q J_r(z,v) ] \\
& \quad \quad \quad \quad = \rho \norm{\nabla_z \q J(z^t, v^t)}_2^2  + \gamma_t^z [\q J(z^t,v^t) - \q J^{\inf}] + \delta_t^z.
\end{align*}
The last inequality is analogous.
\end{proof}
At this point, we also prove \Cref{l: stochastic gradient properties 1} as we would require it for the next step.
\begin{proof}[Proof of \Cref{l: stochastic gradient properties 1}.]
Using the definition of the stochastic gradient \eqref{eq: stochastic gradient}, we observe that
\[
\norm{g_z(z,v)}_2 
= \norm{ \frac{1}{K} \sum_{k=1}^K p_{r^{k}}^{-1} \nabla_z \q J_{ r^{k} }(z, v)  }_2
\le \frac{1}{K} \sum_{k=1}^K p_{r^{k}}^{-1} \norm{\nabla_z \q J_{ r^{k} }(z, v)}_2.
\] 
Let us bound $\norm{\nabla_z \q J_{r}(z, v)}_2$ for each $r \in \q R$ and $z,v \in \bb C^d$. Separating regularizer gives
\[
\norm{\nabla_z \q J_{r}(z, v)}_2 
\le \norm{\nabla_z \q L_{r,\varepsilon}(z, v)}_2 + \alpha_T p_r \norm{z}_2
= \max_{\norm{u}_2 = 1} | u^* \nabla_z \q L_{r,\varepsilon}(z, v) | + \alpha_T p_r \norm{z}_2.
\]
By construction, $\q L_{r,\varepsilon}$ can be seen as a copy of $\q L_{\varepsilon}$ with the set $\q R^\prime = \{r\}$. Thus, it can be written in the form \eqref{eq: loss general} with matrices $Q_{r,k}$, $k=1,\ldots, d$ as in \eqref{eq: matrix Q definition}.
Therefore, the absolute value above can be bounded by repeating the computations in \eqref{eq: gradient bound later} and \eqref{eq: bilinear norm bound}, which yields
\begin{align*}
| u^* \nabla_z \q L_{r,\varepsilon}(z, v) |
& = \left| \sum_{k=1}^d \left[1 - \frac{\sqrt{y_{r,k} + \varepsilon}}{\sqrt{|z^T Q_{r,k} v|^2 + \varepsilon} } \right] z^T Q_{r,k} v \cdot \conj{u^T Q_{r,k} v} \ \right| \\
& \le \left[ \q L_{r,\varepsilon}(z,v) \right]^{1/2} \cdot \left[ \sum_{k=1}^d |u^T Q_{r,k} v|^2 \right]^{1/2}
\le d^{1/2} \left[ \q L_{r,\varepsilon}(z,v) \right]^{1/2} \norm{v}_2 \norm{u}_2.
\end{align*}
Consequently, we get
\[
\norm{\nabla_z \q J_{r}(z, v)}_2 
\le d^{1/2} \left[ \q L_{r,\varepsilon}(z,v) \right]^{1/2} \norm{v}_2 + \alpha_T p_r \norm{z}_2
\] 
and 
\begin{align*}
\norm{g_z(z,v)}_2 
& \le \frac{1}{K} \sum_{k=1}^K p_{r^{k}}^{-1}  d^{1/2}  \norm{v}_2 \left[ \q L_{r^{k},\varepsilon}(z,v) \right]^{1/2} + \frac{1}{K} \sum_{k=1}^K \alpha_T \norm{z}_2 \\
& \le \frac{d^{1/2}  \norm{v}_2 }{K} \left[ \sum_{k=1}^K \q L_{r^{k},\varepsilon}(z,v) \right]^{1/2} \left[ \sum_{k=1}^K p_{r^{k}}^{-2} \right]^{1/2} + \alpha_T \norm{z}_2 \\
& \le \frac{d^{1/2} \norm{v}_2 }{\sqrt K \min_{r \in \q R} p_r} [\q L_{\varepsilon}(z,v) ]^{1/2} + \alpha_T \norm{z}_2.
\end{align*}
Then, using \eqref{eq: L bound}, we conclude that
\begin{align*}
\norm{g_z(z,v)}_2 
& \le \frac{d^{1/2} \norm{v}_2 }{K^{1/2} \min_{r \in \q R} p_r} [d \norm{z}_2^2  \norm{v}_2^2 + \norm{y}_1 ]^{1/2} + \alpha_T \norm{z}_2 \\
& \le \frac{d \norm{v}_2}{K^{1/2} \min_{r \in \q R} p_r} [\norm{z}_2  \norm{v}_2 +  \norm{y/d}_1^{1/2} ] + \alpha_T \norm{z}_2.
\end{align*}
Multiplying two sides by $(\tfrac{15}{4}d)^{1/2}$ concludes the proof for the first bound. The second bound is analogous.
\end{proof}

Now, we combine the results of the previous two lemmas to get the bound of the form \eqref{eq: near-supermartingale condition}.

\begin{lemma}\label{l: descent lemma stochastic second}
Let $\varepsilon, \alpha_T, \beta_T \ge 0$ and $0 \le \theta < 1$. Assume that the step sizes are adapted to the filtration $\{ \q F_t \}_{t \ge 0}$ and satisfy 
\begin{align} 
\mu_t, \nu_t & \le \min \left\{ B^{-\frac{1}{1 - \theta}}(z^t,v^t),  
B_z^{-\frac{2}{3 - \theta}} (z^t,v^t), B_v^{-\frac{2}{3 -\theta}} (z^t,v^t),  (1 - \tfrac{1}{K})^{-1/\theta} \right\}. \label{eq: step sizes stochastic descent 2}
\end{align}
Then, we have
\begin{align*}
\bb E[ \, \q J(z^{t+1},v^{t+1}) - \q J^{\inf} \, | \, \q F_t \, ]
& \le (1 + \gamma_t^z \mu_t^{1 + \theta} + \gamma_t^v \nu_t^{1 + \theta}) [\q J(z^t, v^t) - \q J^{\inf}] \\
& \quad - \mu_t \norm{ \nabla_z \q J(z^t,v^t) }_2^2 - \nu_t \norm{ \nabla_v \q J(z^t,v^t) }_2^2 + \mu_t^{1 + \theta}\delta_t^z + \nu_t^{1 + \theta} \delta_t^v, 
\end{align*}
with $\gamma_t^z,\gamma_t^v,\delta_t^z, \delta_t^v$ as in \Cref{l: stochastic gradient properties 2}.
\end{lemma}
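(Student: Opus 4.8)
The plan is to start from the pointwise estimate of \Cref{l: descent lemma stochastic first}, take the conditional expectation $\bb E[\,\cdot\,|\,\q F_t\,]$, and feed in the moment bounds of \Cref{l: stochastic gradient properties 2}. The first task is to verify that the step size conditions assumed here, \eqref{eq: step sizes stochastic descent 2}, are no weaker than those required by \Cref{l: descent lemma stochastic first}, namely \eqref{eq: step sizes stochastic descent}. The first and the last entries of \eqref{eq: step sizes stochastic descent 2} already appear verbatim (or more strongly) in \eqref{eq: step sizes stochastic descent}; the only genuine difference is that \eqref{eq: step sizes stochastic descent} caps $\mu_t,\nu_t$ by the \emph{random} quantities $(\tfrac{15d}{4})^{-1/(3-\theta)}\norm{g_z(z^t,v^t)}_2^{-2/(3-\theta)}$ and $(\tfrac{15d}{4})^{-1/(3-\theta)}\norm{g_v(z^t,v^t)}_2^{-2/(3-\theta)}$, whereas here we only impose the $\q F_t$-measurable caps $B_z^{-2/(3-\theta)}(z^t,v^t)$ and $B_v^{-2/(3-\theta)}(z^t,v^t)$. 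By \Cref{l: stochastic gradient properties 1} we have $(\tfrac{15d}{4})^{1/2}\norm{g_z(z^t,v^t)}_2 \le B_z(z^t,v^t)$; since $0 \le \theta < 1 < 3$, the exponent $-2/(3-\theta)$ is negative, so raising this inequality to that power reverses it and gives $B_z^{-2/(3-\theta)}(z^t,v^t) \le (\tfrac{15d}{4})^{-1/(3-\theta)}\norm{g_z(z^t,v^t)}_2^{-2/(3-\theta)}$, and likewise for the $v$-term. Hence \eqref{eq: step sizes stochastic descent 2} implies \eqref{eq: step sizes stochastic descent} for every realization, so the conclusion of \Cref{l: descent lemma stochastic first} is valid here.

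Next I would take $\bb E[\,\cdot\,|\,\q F_t\,]$ of that conclusion. Because $z^t,v^t$ and the step sizes $\mu_t,\nu_t$ are $\q F_t$-measurable, the cross terms become $-2\mu_t\RE(\bb E[g_z(z^t,v^t)|\q F_t]^*\nabla_z\q J(z^t,v^t)) = -2\mu_t\norm{\nabla_z\q J(z^t,v^t)}_2^2$ and $-2\nu_t\norm{\nabla_v\q J(z^t,v^t)}_2^2$, using the unbiasedness $\bb E[g(z^t,v^t)|\q F_t] = \nabla\q J(z^t,v^t)$ from \Cref{l: stochastic gradient properties 2}. The quadratic terms $\mu_t^{1+\theta}\bb E[\norm{g_z(z^t,v^t)}_2^2|\q F_t]$ and $\nu_t^{1+\theta}\bb E[\norm{g_v(z^t,v^t)}_2^2|\q F_t]$ are replaced by the second-moment bounds of the same lemma. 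Subtracting $\q J^{\inf}$ from both sides and grouping the multiples of $[\q J(z^t,v^t)-\q J^{\inf}]$ produces the factor $(1+\gamma_t^z\mu_t^{1+\theta}+\gamma_t^v\nu_t^{1+\theta})$, while the gradient terms combine into $-(2\mu_t-\rho\mu_t^{1+\theta})\norm{\nabla_z\q J(z^t,v^t)}_2^2 - (2\nu_t-\rho\nu_t^{1+\theta})\norm{\nabla_v\q J(z^t,v^t)}_2^2$ with $\rho = 1-1/K$, plus the residual constants $\mu_t^{1+\theta}\delta_t^z+\nu_t^{1+\theta}\delta_t^v$.

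The final step is to discard the $\rho$ correction on the gradient coefficients, i.e. to show $2\mu_t-\rho\mu_t^{1+\theta} \ge \mu_t$, equivalently $\rho\mu_t^\theta \le 1$. For $\theta>0$ this is precisely the fourth entry $\mu_t \le (1-1/K)^{-1/\theta}$ of \eqref{eq: step sizes stochastic descent 2}; for $\theta=0$ that entry is vacuous by the convention $\min\{a,b/0\}=a$, but then $\rho\mu_t^0 = 1-1/K \le 1$ holds trivially. The identical argument handles $\nu_t$. Replacing $2\mu_t-\rho\mu_t^{1+\theta}$ and $2\nu_t-\rho\nu_t^{1+\theta}$ by the smaller $\mu_t$ and $\nu_t$ then yields exactly the asserted inequality. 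I expect the only delicate point to be the bookkeeping in the first paragraph — confirming that the deterministic cap $B_z^{-2/(3-\theta)}$ dominates the random cap $\norm{g_z}_2^{-2/(3-\theta)}$ through \Cref{l: stochastic gradient properties 1}, and that the $\q F_t$-measurability hypotheses on the step sizes legitimize pulling them out of the conditional expectation; the rest is routine substitution.
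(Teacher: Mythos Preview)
Your proposal is correct and follows essentially the same route as the paper: verify via \Cref{l: stochastic gradient properties 1} that the $\q F_t$-measurable caps in \eqref{eq: step sizes stochastic descent 2} imply the random caps in \eqref{eq: step sizes stochastic descent}, apply \Cref{l: descent lemma stochastic first}, take conditional expectations using adaptedness and the moment bounds of \Cref{l: stochastic gradient properties 2}, and finally use $\rho\mu_t^\theta \le 1$ (respectively the trivial $\rho \le 1$ when $\theta=0$, or $\rho=0$ when $K=1$) to absorb the $\rho$-term into the gradient coefficient. The only cosmetic difference is that the paper splits cases on $K=1$ versus $K\ge 2$ while you split on $\theta=0$ versus $\theta>0$; both cover all parameter combinations.
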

\begin{proof}
Note that the inequality \eqref{eq: step sizes stochastic descent 2} combined with \Cref{l: stochastic gradient properties 1} implies that the step sizes satisfy \eqref{eq: step sizes stochastic descent} and, therefore, \Cref{l: descent lemma stochastic first} can be used. Taking the conditional expectation with respect to $\q F_t$ and using the fact that step sizes are adapted gives
\begin{align*} 
\bb E[ \, \q J(z^{t+1},v^{t+1})  \, | \, \q F_t \, ]
& \le \q J(z^t, v^t) 
- 2 \mu_t \RE( \bb E[ \, g_z^*(z^t,v^t) \, | \, \q F_t \, ] \nabla_z \q J(z^t,v^t) ) 
 \\
& \quad - 2 \nu_t \RE( \bb E[ \, g_v^*(z^t,v^t)  \, | \, \q F_t \, ]  \nabla_v \q J(z^t,v^t) )  \\
& \quad + \mu_t^{1 + \theta} \bb E[ \, \norm{g_z (z^t,v^t)}_2^2 \, | \, \q F_t \, ]
+ \nu_t^{1 + \theta} \bb E[ \, \norm{g_v (z^t,v^t)}_2^2 \, | \, \q F_t \, ].
\end{align*}
Next, \Cref{l: stochastic gradient properties 2} is applied,
\begin{align*} 
\bb E[ \, \q J(z^{t+1},v^{t+1})  \, | \, \q F_t \, ]
& \le \q J(z^t, v^t) 
- 2 \mu_t \norm{ \nabla_z \q J(z^t,v^t) }_2^2  - 2 \nu_t \norm{ \nabla_v \q J(z^t,v^t) }_2^2 \\
& \quad + \mu_t^{1 + \theta} [\gamma_t^z [\q J(z^t, v^t) - \q J^{\inf}] + \rho \norm{\nabla_z \q J(z^t, v^t)}_2^2 + \delta_t^z] \\
& \quad + \nu_t^{1 + \theta} [\gamma_t^v [\q J(z^t, v^t) - \q J^{\inf}] + \rho \norm{\nabla_z \q J(z^t, v^t)}_2^2 + \delta_t^v].
\end{align*}
If $\rho = 1-K^{-1} =0$, the corresponding terms can be discarded and a trivial bound gives $-2\mu_t \le -\mu_t$ and $-2\nu_t \le -\nu_t$. Otherwise, by \eqref{eq: step sizes stochastic descent 2}, $\mu_t^\theta \rho \le 1$, $\nu_t^\theta \rho \le 1$, which gives
\[
- 2 \mu_t + \mu_t^{1 + \theta} \rho \le - \mu_t
\quad \text{and} \quad
- 2 \nu_t + \nu_t^{1 + \theta} \rho \le - \nu_t.
\] 
Subtracting $\q J^{\inf}$ on both sides and grouping the terms concludes the proof. 
\end{proof}
Note that if the step sizes were not adapted, it would be not possible to separate them from the stochastic gradients when computing the conditional expectation. Furthermore, to ensure that $\mu_t$ and $\nu_t$ do not depend on $g(z^t,v^t)$ explicitly as in \eqref{eq: step sizes stochastic descent} we used the bounds $B_z$ and $B_v$ introduced in \Cref{l: stochastic gradient properties 1}. In this way, the right-hand side of \eqref{eq: step sizes stochastic descent 2} is also adapted with $\{\q F_t\}_{t \ge 0}$. 

\subsubsection{Applying \Cref{thm: near-supermartingale convergence} and establishing convergence}

Finally, \Cref{thm: near-supermartingale convergence} can be applied, which gives us the first claim of \Cref{thm: convergence stochastic gradient}.

\begin{lemma}\label{l: convergence stochastic first}
Let $\varepsilon, \alpha_T, \beta_T \ge 0$, $0 \le \theta < 1$ and $\kappa < \theta/(1 + \theta)$. Assume that the step sizes are adapted to the filtration $\{ \q F_t \}_{t \ge 0}$ and satisfy $\mu_t, \nu_t \le \mu_t^{\max}$ with $\mu_t^{\max}$ as in \eqref{eq: mu max}. Then, the sequence $\q J(z^{t+1},v^{t+1})$ converges a.s. Furthermore, 
\[
\sum_{t \ge 0} \mu_t \norm{ \nabla_z \q J(z^t,v^t) }_2^2 + \nu_t \norm{ \nabla_v \q J(z^t,v^t) }_2^2 < \infty \quad \text{a.s.}
\]
\end{lemma}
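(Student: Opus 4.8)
The plan is to apply the Robbins--Siegmund theorem \Cref{thm: near-supermartingale convergence} with the nonnegative adapted sequences
\[
Y_t := \q J(z^t,v^t) - \q J^{\inf}, \qquad X_t := \mu_t\norm{\nabla_z\q J(z^t,v^t)}_2^2 + \nu_t\norm{\nabla_v\q J(z^t,v^t)}_2^2, \qquad Z_t := \mu_t^{1+\theta}\delta_t^z + \nu_t^{1+\theta}\delta_t^v,
\]
and with the multiplicative perturbation supplied by the $\gamma_t^z,\gamma_t^v$ terms, where $\delta_t^z,\delta_t^v,\gamma_t^z,\gamma_t^v$ are as in \Cref{l: stochastic gradient properties 2}. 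Since $(1+t)^{-1+\kappa}\le 1$ (because $\kappa<\theta/(1+\theta)<1$), the assumption $\mu_t,\nu_t\le\mu_t^{\max}$ entails the step-size condition \eqref{eq: step sizes stochastic descent 2}, so \Cref{l: descent lemma stochastic second} already provides an inequality of exactly the form \eqref{eq: near-supermartingale condition}:
\[
\bb E[\,Y_{t+1}\mid\q F_t\,] \le \bigl(1 + \gamma_t^z\mu_t^{1+\theta} + \gamma_t^v\nu_t^{1+\theta}\bigr)Y_t - X_t + Z_t .
\]
It therefore remains to verify the hypotheses of \Cref{thm: near-supermartingale convergence}.

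The routine checks are nonnegativity and adaptedness. From $\q J\ge 0$ one gets $0\le\q J^{\inf}<\infty$ and $Y_t\ge 0$; clearly $X_t\ge 0$; and since $\q J=\sum_r\q J_r$ with each $\q J_r\ge 0$, the constant $\Delta:=\q J^{\inf}-\sum_{r\in\q R}\inf_{z,v}\q J_r(z,v)$ is nonnegative and finite, hence $\delta_t^z,\delta_t^v\ge 0$ and $Z_t\ge 0$. All three sequences are $\q F_t$-measurable because $z^t,v^t$ and the step sizes are adapted.

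The core of the proof, and the step I expect to be the main obstacle, is to dominate the perturbation $\eta_t:=\gamma_t^z\mu_t^{1+\theta}+\gamma_t^v\nu_t^{1+\theta}$ and the term $Z_t$ by a \emph{deterministic} summable sequence. From \eqref{eq: B} one reads off $d\norm{v}_2^2\le B(z,v)/10$ and $\max\{\alpha_T,\beta_T\}\le B(z,v)/3$, so that (using $\min_r p_r\le 1$) $\gamma_t^z\le B(z^t,v^t)/(K\min_r p_r)$, and likewise $\gamma_t^v\le B(z^t,v^t)/(K\min_r p_r)$. Combining this with $\mu_t\le(1+t)^{-1+\kappa}B^{-1/(1-\theta)}(z^t,v^t)$ gives
\[
\gamma_t^z\mu_t^{1+\theta} \le \frac{(1+t)^{(-1+\kappa)(1+\theta)}}{K\min_r p_r}\,B(z^t,v^t)^{-\frac{2\theta}{1-\theta}} .
\]
The exponent $-2\theta/(1-\theta)$ is nonpositive, and $B(z,v)\ge 3\sqrt{d}\,\norm{y}_1^{1/2}+3\max\{\alpha_T,\beta_T\}$; if this lower bound is positive the last factor is bounded by a constant, and if it vanishes the problem is trivial ($\q J^{\inf}=0$, already attained). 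Hence $\gamma_t^z\mu_t^{1+\theta}$, and symmetrically $\gamma_t^v\nu_t^{1+\theta}$, is dominated by a deterministic multiple of $(1+t)^{(-1+\kappa)(1+\theta)}$, and the hypothesis $\kappa<\theta/(1+\theta)$ is precisely equivalent to $(-1+\kappa)(1+\theta)<-1$, which makes this majorant summable. Writing $\bar\eta_t$ for the resulting deterministic summable bound on $\eta_t$ and using $Y_t\ge 0$, the displayed descent inequality holds with $\eta_t$ replaced by $\bar\eta_t$; and $Z_t=\Delta\,\eta_t\le\Delta\,\bar\eta_t$ is summable as well. \Cref{thm: near-supermartingale convergence} then yields $\sum_t X_t<\infty$ a.s.\ and a.s.\ convergence of $Y_t$, hence of $\q J(z^t,v^t)$ (equivalently $\q J(z^{t+1},v^{t+1})$), which is the assertion of the lemma.
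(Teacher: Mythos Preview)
Your proof is correct and follows essentially the same route as the paper's: apply Robbins--Siegmund with the same $Y_t,X_t,Z_t$, invoke \Cref{l: descent lemma stochastic second}, and then dominate the random perturbation $\gamma_t^z\mu_t^{1+\theta}+\gamma_t^v\nu_t^{1+\theta}$ by a deterministic summable sequence via the bound $\gamma_t^z\lesssim B(z^t,v^t)$, the first branch of $\mu_t^{\max}$, and the lower bound $B\ge 3\sqrt{d}\norm{y}_1^{1/2}+3\max\{\alpha_T,\beta_T\}$. The only cosmetic difference is that the paper first writes $B^{(1+\theta)/(1-\theta)}=B\cdot B^{2\theta/(1-\theta)}$ and bounds $(\gamma_t^z+\gamma_t^v)/B$ explicitly, whereas you bound $\gamma_t^z\le B/(K\min_r p_r)$ directly; the arithmetic is the same, and your treatment of the degenerate case $y=0,\ \alpha_T=\beta_T=0$ is slightly more explicit than the paper's.
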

\begin{proof}
Since   $\mu_t, \nu_t \le \mu_t^{\max}$ and $\kappa <\theta/(1+\theta) \le 1$, it holds that $(1+t)^{-1+\kappa} \le 1$ for $t \ge 0$ and the condition \eqref{eq: step sizes stochastic descent 2} is satisfied. Hence, \Cref{l: descent lemma stochastic second} can be used. As it was mentioned before, our goal is to apply \Cref{thm: near-supermartingale convergence} for sequences $Y_t = \q J(z^t, v^t) - \q J^{\inf}$ and $X_t = \mu_t \norm{ \nabla_z \q J(z^t,v^t) }_2^2 + \nu_t \norm{ \nabla_v \q J(z^t,v^t) }_2^2$. By looking at \Cref{l: descent lemma stochastic second}, it is only natural to pick $\eta_t$ as $\gamma_t^z \mu_t^{1 + \theta} + \gamma_t^v \nu_t^{1+\theta}$ and $Z_t$ as $\delta_t^z \mu_t^{1 + \theta} + \delta_t^v \nu_t^{1+\theta}$. However, the sequence $\eta_t$ has to be deterministic, which is not true for $\gamma_t^z \mu_t^{1 + \theta} + \gamma_t^v \nu_t^{1+\theta}$ as they depend on $z^t$ and $v^t$. Thus, we first bound $\gamma_t^z \mu_t^{1 + \theta} + \gamma_t^v \nu_t^{1+\theta}$ by a deterministic term. That is, by \eqref{eq: mu max} we have
\begin{align*}
&\gamma_t^z \mu_t^{1 + \theta} + \gamma_t^v \nu_t^{1+\theta}
\le (\gamma_t^z  + \gamma_t^v) (\mu_t^{\max})^{1 + \theta}
\le \frac{\gamma_t^z + \gamma_t^v}{(1+t)^{(1-\kappa)(1+\theta)} B^{\frac{1 + \theta}{1-\theta}}(z^t,v^t)}\\
& \quad \le \frac{d (\norm{z^t}_2^2 + \norm{v^t}_2^2) (\min_{r \in \q R} p_r)^{-1} + \alpha_T + \beta_T }{(1+t)^{(1-\kappa)(1+\theta)}  K \left[  3 \max\{\alpha_T,\beta_T\} + 3 d \left( \tfrac{10}{3} \norm{z^t}_2^2  + \tfrac{10}{3} \norm{v^t}_2^2 + \norm{y/d}_1^{1/2} \right) \right]^{\frac{1 + \theta}{1 - \theta}}}.
\end{align*}
We can split the power $(1 +\theta)/(1 - \theta)$ in the denominator as $1$ and $2\theta/(1- \theta)$. The first term gives us 
\begin{align*}
& \frac{d (\norm{z^t}_2^2 + \norm{v^t}_2^2) (\min_{r \in \q R} p_r)^{-1} + \alpha_T + \beta_T}{3d \left( \tfrac{10}{3} \norm{z^t}_2^2  + \tfrac{10}{3} \norm{v^t}_2^2 + \norm{y/d}_1^{1/2} \right) + 3\max\{\alpha_T,\beta_T \} } \\
&\quad \le \frac{10 d  (\norm{z^t}_2^2 + \norm{v^t}_2^2) + 3 \max\{ \alpha_T,\beta_T \} \min_{r \in \q R} p_r }{ 1.5 \min_{r \in \q R} p_r  \left[ 10 d \norm{z^t}_2^2  + 10 d \norm{v^t}_2^2 + 3 d \norm{y/d}_1^{1/2}  + 3 d \max\{\alpha_T,\beta_T\} \right]} \\
&\quad  \le \frac{1}{1.5 \min_{r \in \q R} p_r},
\end{align*}
where we used in the last inequality that $p_r \le 1$ for all $r \in \q R$. For the second term, we observe that 
\begin{equation}\label{eq: mu first part bound}
3d\left( \tfrac{10}{3} \norm{z^t}_2^2  + \tfrac{10}{3} \norm{v^t}_2^2 + \norm{y/d}_1^{1/2} \right)  + 3\max\{\alpha_T,\beta_T\}
\ge 3 \sqrt d \norm{y}_1^{1/2}  + 3\max\{\alpha_T,\beta_T\}.
\end{equation}
Combining these two bounds yields
\begin{equation}\label{eq: alpha mu bound}
\gamma_t^z \mu_t^{1 + \theta} + \gamma_t^v \nu_t^{1+\theta} 
\le \frac{(1+t)^{-(1 - \kappa)(1 + \theta)}}{ 1.5 \min_{r \in \q R} p_r [3 \sqrt{d} \norm{y}_1^{1/2}  + 3 \max\{\alpha_T,\beta_T\}]^{\frac{2\theta}{1-\theta}} } =: \eta_t.
\end{equation}
Note that the denominator of $\eta_t$ is a constant. Furthermore, since $\kappa < \theta/(1 + \theta)$, we have
\[
(1 - \kappa)(1 + \theta)
= 1 + \theta - \kappa(1 + \theta) > 1+ \theta - \theta = 1,
\]
and the series $\sum_{t \ge 0} \eta_t$ are convergent. The sequence $Z_t := \delta_t^z \mu_t^{1 + \theta} + \delta_t^v \nu_t^{1+\theta}$ is adapted to $\{\q F_t \}_{t \ge 0}$ and admits
\begin{equation}\label{eq: delta mu bound}
Z_t 
= (\gamma_t^z \mu_t^{1 + \theta} + \gamma_t^v \nu_t^{1+\theta}) [ \q J^{\inf} - \sum_{r \in \q R} \inf_{z,v \in \bb C^d} \q J_{r}(z,v)]
\le \eta_t [ \q J^{\inf} - \sum_{r \in \q R} \inf_{z,v \in \bb C^d} \q J_{r}(z,v)].
\end{equation}
Thus, the condition $\sum_{t \ge 0} Z_t < \infty$ is also satisfied. Also, by \Cref{l: descent lemma stochastic second}, we have
\begin{align*}
\bb E[ \, Y_{t+1} \, | \, \q F_t \, ]
& \le (1 + \gamma_t^z \mu_t^{1 + \theta} + \gamma_t^v \nu_t^{1 + \theta}) Y_t - X_t + Z_t 
\le (1 + \eta_t) Y_t - X_t + Z_t. 
\end{align*}
Hence, by \Cref{thm: near-supermartingale convergence}, $\q J(z^t, v^t) - \q J^{\inf}$ converges a.s. and $\sum_{t \ge 0} \mu_t \norm{ \nabla_z \q J(z^t,v^t) }_2^2 + \nu_t \norm{ \nabla_v \q J(z^t,v^t) }_2^2 < \infty$ a.s. This implies that $\q J(z^t, v^t)$ converges a.s. as well. 
\end{proof}

The next step is to shift our focus on the convergence of the gradient and to prove the second claim of \Cref{thm: convergence stochastic gradient}.

\begin{lemma}\label{l: convergence stochastic second}
Let $\varepsilon \ge 0$, $\alpha_T, \beta_T > 0$, $0 < \theta < 1$ and $0 \le \kappa < \theta/(1 + \theta)$.  Consider the step sizes $\mu_t = \mu  \cdot \mu_t^{\max}$ and $\nu_t = \nu \cdot \mu_t^{\max}$ for some $0 < \mu,\nu \le 1$ and $\mu_t^{\max}$ as in \eqref{eq: mu max}. Then we have
\[
\min_{t=0,\ldots, T -1} \norm{\nabla \q J(z^t,v^t)}_2^2 
\le
\begin{cases}
\frac{\kappa C_2}{\min\{\mu,\nu\}  [(1+T)^\kappa - 1] }, & \kappa > 0, \\
\frac{C_2}{\min\{\mu,\nu\} \ln (1+T) }, & \kappa = 0,
\end{cases}
\quad \text{a.s.,}
\]
and 
\[
\inf_{t \ge 0} \norm{\nabla \q J(z^t,v^t)}_2^2  = 0 \quad \text{a.s.}
\]
\end{lemma}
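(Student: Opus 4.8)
The plan is to turn the two conclusions of \Cref{l: convergence stochastic first} --- a.s.\ convergence of $\q J(z^t,v^t)$ and a.s.\ summability of $\mu_t\norm{\nabla_z \q J(z^t,v^t)}_2^2+\nu_t\norm{\nabla_v \q J(z^t,v^t)}_2^2$ --- into the claimed rate via a weighted-averaging argument, the only new ingredient being a uniform-in-$t$ lower bound on the step sizes that holds almost surely.

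First I would exploit $\alpha_T,\beta_T>0$: from $\alpha_T\norm{z^t}_2^2\le\q J(z^t,v^t)$ and $\beta_T\norm{v^t}_2^2\le\q J(z^t,v^t)$ together with the a.s.\ convergence of $\{\q J(z^t,v^t)\}_{t\ge 0}$ given by \Cref{l: convergence stochastic first}, the quantities $\sup_{t\ge 0}\norm{z^t}_2$ and $\sup_{t\ge 0}\norm{v^t}_2$ are a.s.\ finite. Since $B$, $B_z$, $B_v$ in \eqref{eq: B} and \eqref{eq: Bz} are continuous and nondecreasing in $\norm{z}_2$ and $\norm{v}_2$, the sequences $B(z^t,v^t)$, $B_z(z^t,v^t)$, $B_v(z^t,v^t)$ are then a.s.\ bounded uniformly in $t$. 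Because $\kappa<\theta/(1+\theta)\le 1$ forces $(1+t)^{-1+\kappa}\le 1$, one can absorb the non-decaying term $(1 - \tfrac{1}{K})^{-1/\theta}$ and the bounded terms $B_z^{-2/(3-\theta)}(z^t,v^t)$, $B_v^{-2/(3-\theta)}(z^t,v^t)$ into a single constant, producing an a.s.\ positive random variable $c$ with
\[
\mu_t^{\max}\ \ge\ c\,(1+t)^{-1+\kappa}\qquad\text{for all }t\ge 0.
\]

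Next, applying \Cref{l: convergence stochastic first} with $\mu_t=\mu\,\mu_t^{\max}$ and $\nu_t=\nu\,\mu_t^{\max}$ and dividing the two nonnegative convergent series $\sum_t\mu_t\norm{\nabla_z \q J(z^t,v^t)}_2^2$ and $\sum_t\nu_t\norm{\nabla_v \q J(z^t,v^t)}_2^2$ by $\mu$ and $\nu$, one gets $\sum_{t\ge 0}\mu_t^{\max}\norm{\nabla \q J(z^t,v^t)}_2^2<\infty$ a.s.\ (using $\norm{\nabla\q J}_2^2=\norm{\nabla_z\q J}_2^2+\norm{\nabla_v\q J}_2^2$ from \eqref{eq: def gradient J}); set $C_2:=c^{-1}\sum_{t\ge 0}\mu_t^{\max}\norm{\nabla \q J(z^t,v^t)}_2^2$, which is the a.s.\ finite random variable of the statement. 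Since $\mu_t,\nu_t\ge\min\{\mu,\nu\}\,c\,(1+t)^{-1+\kappa}$ and, as $\mu,\nu\le 1$, $\sum_{t=0}^{T-1}\bigl(\mu_t\norm{\nabla_z\q J}_2^2+\nu_t\norm{\nabla_v\q J}_2^2\bigr)\le c\,C_2$, I obtain
\[
\min\{\mu,\nu\}\sum_{t=0}^{T-1}(1+t)^{-1+\kappa}\,\norm{\nabla \q J(z^t,v^t)}_2^2\ \le\ C_2\qquad\text{a.s.}
\]
Bounding $\min_{t<T}\norm{\nabla\q J(z^t,v^t)}_2^2$ by the weighted average on the left divided by $\sum_{t=0}^{T-1}(1+t)^{-1+\kappa}$, and using the elementary estimate $\sum_{t=0}^{T-1}(1+t)^{-1+\kappa}=\sum_{s=1}^{T}s^{\kappa-1}\ge\int_1^{T+1}s^{\kappa-1}\,ds$, which equals $\kappa^{-1}\bigl[(1+T)^{\kappa}-1\bigr]$ for $\kappa>0$ and $\ln(1+T)$ for $\kappa=0$, yields the two displayed rates.

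Finally, for $\inf_{t\ge 0}\norm{\nabla \q J(z^t,v^t)}_2^2=0$ a.s., I would either let $T\to\infty$ in the rate just obtained (the right-hand side tends to $0$), or invoke the standard fact that $\sum_{t\ge 0}(1+t)^{-1+\kappa}\norm{\nabla\q J(z^t,v^t)}_2^2<\infty$ together with $\sum_{t\ge 0}(1+t)^{-1+\kappa}=\infty$ (valid since $\kappa-1\ge -1$) forces $\liminf_{t\to\infty}\norm{\nabla\q J(z^t,v^t)}_2^2=0$ a.s. The main obstacle is the first step: obtaining a uniform-in-$t$, a.s.\ lower bound on $\mu_t^{\max}$ of the correct order $(1+t)^{-1+\kappa}$; this rests entirely on the a.s.\ boundedness of the trajectory, which is where the assumptions $\alpha_T,\beta_T>0$ and the a.s.\ convergence of $\q J(z^t,v^t)$ from \Cref{l: convergence stochastic first} are used, and the remaining manipulations are a routine weighted-average argument as in the proof of \Cref{thm: convergence gradient}.
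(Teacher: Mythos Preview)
Your proposal is correct and follows essentially the same route as the paper's proof: both use \Cref{l: convergence stochastic first} for a.s.\ summability, exploit $\alpha_T,\beta_T>0$ together with a.s.\ boundedness of $\q J(z^t,v^t)$ to bound $\norm{z^t}_2,\norm{v^t}_2$ uniformly, derive the lower bound $\mu_t^{\max}\ge c\,(1+t)^{-1+\kappa}$, and finish with the same weighted-average and integral comparison. The only cosmetic difference is bookkeeping of the constant $C_2$ (the paper writes $C_2=C_3/C_4$ with $C_3$ the a.s.\ finite sum and $C_4$ your $c$), which does not affect the argument.
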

\begin{proof}
Since $\mu,\nu \le 1$, the step sizes satisfy conditions of \Cref{l: convergence stochastic first}, which gives 
\[
\sum_{t \ge 0} \mu_t \norm{ \nabla_z \q J(z^t,v^t) }_2^2 + \nu_t \norm{ \nabla_v \q J(z^t,v^t) }_2^2 =: C_3 < \infty \quad \text{a.s.}
\]
Thus, we get 
\begin{align*}
\min_{t=0,\ldots, T -1} \norm{\nabla \q J(z^t,v^t)}_2^2
& \le \frac{1}{\min\{\mu,\nu\}} \min_{t=0,\ldots, T -1} \left[ \mu \norm{ \nabla_z \q J(z^t,v^t) }_2^2 + \nu \norm{ \nabla_v \q J(z^t,v^t) }_2^2 \right] \\
& \le \frac{\sum_{t = 0}^{T-1} \mu_t \norm{ \nabla_z \q J(z^t,v^t) }_2^2 + \nu_t \norm{ \nabla_v \q J(z^t,v^t) }_2^2}{\min\{\mu,\nu\} \sum_{t = 0}^{T-1} \mu_t^{\max} } \\
& 
= \frac{C_3}{\min\{\mu,\nu\} \sum_{t = 0}^{T-1} \mu_t^{\max} }.
\end{align*}
Let us show that the series $\sum_{t = 0}^{T-1} \mu_t^{\max}$ diverges a.s. By \Cref{l: convergence stochastic first}, we also obtained that $\{ \q J(z^t, v^t) \}_{t \ge 0}$ converges a.s. Hence, it is bounded and let us denote the upper bound by $\q J^{\sup},$ such that $\q J^{\sup} < +\infty$ a.s. Since $\alpha_T > 0$, we get
\begin{equation}\label{eq: norm bounded stochastic}
\alpha_T \norm{z^t}_2^2 
\le \q L_\varepsilon(z^t,v^t) + \alpha_T \norm{z^t}_2^2 + \beta_T \norm{v^t}_2^2 = \q J(z^t, v^t) \le \q J^{\sup},
\end{equation}
so that $\norm{z^t}_2^2 \le \alpha_T^{-1}\q J^{\sup}$ and, analogously $\norm{v^t}_2^2 \le \beta_T^{-1}\q J^{\sup}$. Consequently, we can bound
\begin{align*}
B(z^t,v^t) & = 3d\left( \tfrac{10}{3} \norm{z^t}_2^2  + \tfrac{10}{3} \norm{v^t}_2^2 + \norm{y/d}_1^{1/2} \right)  + 3\max\{\alpha_T,\beta_T\} \\
& \le d \left( 10 (\alpha_T^{-1} + \beta_T^{-1}) \q J^{\sup} + 3 \norm{y/d}_1^{1/2} \right) + 3\max\{\alpha_T,\beta_T\} =: C_{4,1},
\end{align*}
and 
\begin{align*}
& B_z(z^t,v^t)  = (\tfrac{15}{4} d)^{1/2} \left[ \frac{d \norm{v^t}_2}{\sqrt K \min_{r \in \q R} p_r} \left( \norm{z^t}_2  \norm{v^t}_2 +  \norm{y/d}_1^{1/2} \right) + \alpha_T \norm{z^t}_2 \right]  \\
& \quad  \le (\tfrac{15}{4} d)^{1/2} \left[ \frac{d \sqrt{\q J^{\sup}}}{\sqrt{\beta_T K} \min_{r \in \q R} p_r} [\alpha_T^{-1/2} \beta_T^{-1/2} \q J^{\sup}  +  \norm{y/d}_1^{1/2} ] + \alpha_T^{1/2} \sqrt{\q J^{\sup}} \right] =: C_{4,2}, \\
& B_v(z^t,v^t) = (\tfrac{15}{4} d)^{1/2} \left[ \frac{d \norm{z^t}_2}{\sqrt K \min_{r \in \q R} p_r} \left( \norm{z^t}_2  \norm{v^t}_2 +  \norm{y/d}_1^{1/2} \right) + \beta_T \norm{v^t}_2 \right] \\
& \quad  \le (\tfrac{15}{4} d)^{1/2} \left[ \frac{d \sqrt{\q J^{\sup}}}{\sqrt{\alpha_T K} \min_{r \in \q R} p_r} \left( \alpha_T^{-1/2} \beta_T^{-1/2} \q J^{\sup}  +  \norm{y/d}_1^{1/2} \right) + \beta_T^{1/2} \sqrt{\q J^{\sup}} \right]  =: C_{4,3}.
\end{align*}
Hence,
\begin{align}
\mu_t^{\max} 
& \ge \min \left\{ (1+t)^{-1 + \kappa} C_{4,1}^{-\frac{1}{1 - \theta}}, C_{4,2}^{-\frac{2}{3 - \theta}}, C_{4,3}^{-\frac{2}{3 -\theta}}, (1 - \tfrac{1}{K})^{-1/\theta} \right\} \label{eq: mu diverges} \\
& \ge (1+t)^{-1 + \kappa} \min \left\{ C_{4,1}^{-\frac{1}{1 - \theta}}, C_{4,2}^{-\frac{2}{3 - \theta}}, C_{4,3}^{-\frac{2}{3 -\theta}}, (1 - \tfrac{1}{K})^{-1/\theta} \right\} =: (1+t)^{-1 + \kappa} C_{4} > 0 \text{ a.s.}, \nonumber
\end{align}
and
\begin{equation}\label{eq: min bound tech}
\min_{t=0,\ldots, T -1} \norm{\nabla \q J(z^t,v^t)}_2^2
\le \frac{C_3}{C_4 \min\{\mu,\nu\} \sum_{t = 0}^{T-1} (1+t)^{-1 + \kappa} }.
\end{equation}
As $0 \le \kappa$, the inequality $1 - \kappa \le 1$ holds and the series $\sum_{t \ge 0} (1+t)^{-1 + \kappa}$ diverges. This implies that 
\[
\inf_{t \ge 0} \norm{\nabla \q J(z^t,v^t)}_2^2 = 0.
\]
In order to transform the bound in \eqref{eq: min bound tech}  in terms of $T$, we consider cases $\kappa = 0$ and $\kappa > 0$ separately. For the case $\kappa >0$, we have 
\begin{align*}
\sum_{t = 0}^{T-1}  (1+t)^{-1+ \kappa}
& = \sum_{t = 0}^{T-1} \int_t^{t+1} (1+t)^{-1+ \kappa} ds
\ge \sum_{t = 0}^{T-1} \int_{t}^{t+1} (1+s)^{-1+ \kappa} ds \\
& = \int_0^{T} (1+s)^{-1+ \kappa} ds 
= \kappa^{-1} \left[ (1+T)^\kappa - 1 \right].
\end{align*} 
Therefore, we have
\[
\min_{t = 0, \ldots, T-1} \norm{\nabla \q J(z^t,v^t)}_2^2 
\le \frac{\kappa C_3}{C_4 \min\{\mu,\nu\}  [(1+T)^\kappa - 1] }.
\]
If $\kappa = 0$, then
\[
\sum_{t = 0}^{T-1}  (1+t)^{-1}
\ge \int_0^{T} (1+s)^{-1} ds 
= \ln(1 + T)
\]
and
\[
\min_{t = 0, \ldots, T-1} \norm{\nabla \q J(z^t, v^t)}_2^2 
\le \frac{C_3}{C_4 \min\{\mu,\nu\} \ln(1+T)}.
\]
Setting $C_2  = C_3 / C_4$, which is finite a.s., concludes the proof.
\end{proof}

Note that while $\theta = 0$ was allowed in \Cref{l: convergence stochastic first}, it leads to unfeasible condition $0 \le \kappa < 0$ in \Cref{l: convergence stochastic second} and has to be excluded.

\subsubsection{From the convergence of a subsequence to the convergence of sequence}

\Cref{l: convergence stochastic second} only provides that $\inf_{t \ge 0} \norm{\nabla \q J(z^t,v^t)}_2^2 = 0$, which is equivalent to existence of a subsequence $\{ t_k \}_{k \ge 0}$  such that $\nabla \q J(z^{t_k}, v^{t_k})$ vanishes as $k \to \infty$, see Corollary 3.9 of \cite{Melnyk.2022b}. In order to get that $\nabla \q J(z^{t}, v^{t})$ as $t \to \infty$, we follow the arguments of Theorem 2 in \cite{Orabona.2020}. The cornerstone of their argumentation is the following lemma.

\begin{lemma}[{\cite[Lemma 1]{Orabona.2020}}]\label{l: convergence to 0}
Let $\{\xi_t\}_{t \ge 0}$ and $\{\mu_t\}_{t \ge 0}$ be two nonnegative sequences and $\{ q^t \}_{t \ge 0}$ be a sequence in $\bb C^d$. Assume that 
\[
\sum_{t = 0}^{\infty} \mu_t \xi_t^p < \infty, \quad \sum_{t = 0}^{\infty} \mu_t = \infty \quad \text{ and } \quad \norm{ \sum_{t = 0}^{\infty} \mu_t q^t } < \infty,
\]   
for some $p \ge 1$ and some norm $\norm{\cdot}$. If there exists $L \ge 0$ such that for all $t, k \ge 0$ the inequality
\[
| \xi_{t + k} - \xi_t | \le L \sum_{s = t}^{t + k -1}  \mu_s  \xi_s + L \norm{ \sum_{s = t}^{t + k -1}  \mu_s q^s }
\]
holds, then $\lim_{t \to \infty} \xi_{t} = 0$.
\end{lemma}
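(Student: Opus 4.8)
The plan is to show $\lim_{t\to\infty}\xi_t=0$ by establishing separately that $\liminf_t\xi_t=0$ and $\limsup_t\xi_t=0$; since $\{\xi_t\}$ is nonnegative these two facts force $\xi_t\to0$. The first is immediate: if $\xi_t\ge\delta>0$ for all $t\ge T$, then $\sum_{t\ge T}\mu_t\xi_t^p\ge\delta^p\sum_{t\ge T}\mu_t=\infty$, contradicting $\sum_t\mu_t\xi_t^p<\infty$. In preparation for the second claim I would record three consequences of the hypotheses: $R_N:=\sum_{s\ge N}\mu_s\xi_s^p\to0$; the series $\sum_s\mu_sq^s$ converges, hence $\mu_sq^s\to0$; and, by the Cauchy criterion for that series, $\widetilde\delta_N:=\sup_{N'\ge N,\,M>N'}\norm{\sum_{s=N'}^{M-1}\mu_sq^s}\to0$ as $N\to\infty$.

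The substance is to rule out $\limsup_t\xi_t>0$. Suppose it holds and fix $c>0$ with $\limsup_t\xi_t>3c$ (this covers $\limsup=\infty$ as well). Using $\liminf_t\xi_t=0$ I would construct indices $\ell_0<\tau_1<\ell_1<\tau_2<\cdots$ with $\xi_{\ell_i}<c$ and $\xi_{\tau_i}>3c$, where $\tau_i:=\min\{s>\ell_{i-1}:\xi_s>3c\}$ and $\ell_i$ is any later index with $\xi_{\ell_i}<c$, and then set $\sigma_i:=\max\{s<\tau_i:\xi_s<c\}$. This maximum exists (the set contains $\ell_{i-1}$), the $\sigma_i$ are strictly increasing with $\sigma_i\to\infty$, one has $\xi_{\sigma_i}<c$, and for $\sigma_i<s<\tau_i$ one has $\xi_s\ge c$ by maximality of $\sigma_i$. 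Applying the hypothesised inequality with $t=\sigma_i$ and $t+k=\tau_i$, bounding $\norm{\sum_{s=\sigma_i}^{\tau_i-1}\mu_sq^s}\le\widetilde\delta_{\sigma_i}$ and, on $\sigma_i<s<\tau_i$, using $\xi_s\ge c$ to write $\mu_s\xi_s=\mu_s\xi_s^{p}\xi_s^{1-p}\le c^{1-p}\mu_s\xi_s^{p}$, I would obtain
\[
2c<\xi_{\tau_i}-\xi_{\sigma_i}\le L\,\mu_{\sigma_i}\xi_{\sigma_i}+L\,c^{1-p}R_{\sigma_i+1}+L\,\widetilde\delta_{\sigma_i}.
\]
The last two terms tend to $0$ as $i\to\infty$, so $\mu_{\sigma_i}\xi_{\sigma_i}\ge c/L$ for all large $i$.

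To finish I would distinguish two cases for the bounded sequence $\{\xi_{\sigma_i}\}$. If $\xi_{\sigma_i}\ge\eta>0$ along a subsequence, then along it $\mu_{\sigma_i}\xi_{\sigma_i}^{p}=(\mu_{\sigma_i}\xi_{\sigma_i})\,\xi_{\sigma_i}^{p-1}\ge(c/L)\,\eta^{p-1}>0$; since the $\sigma_i$ are distinct this contradicts $\sum_t\mu_t\xi_t^{p}<\infty$. When $p=1$ this disposes of everything at once, since then $\sum_i\mu_{\sigma_i}\xi_{\sigma_i}\ge\sum_i c/L=\infty$. The remaining possibility, $p>1$ with $\xi_{\sigma_i}\to0$ along a subsequence — hence $\mu_{\sigma_i}\to\infty$ — is the main obstacle: the single boundary term $\mu_{\sigma_i}\xi_{\sigma_i}$ in the displayed bound is not controlled by $R_{\sigma_i}$, so one has to show that the one-step rise from $\sigma_i$, where $\xi$ is tiny, to $\sigma_i+1$, where $\xi\ge c$, cannot occur without leaving a non-vanishing footprint in $\sum_t\mu_t\xi_t^{p}$, by playing $\mu_sq^s\to0$ (which forbids $\xi$ from dropping back quickly while $\mu$ is small) against $\sum_t\mu_t\xi_t^{p}<\infty$ (which forbids $\xi$ from remaining large). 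This bookkeeping is the delicate part of the argument. I note that it is bypassed entirely in the setting where the lemma is invoked, since there the step sizes satisfy $\mu_t\le(\text{const})\,(1+t)^{-1+\kappa}\to0$, so $\mu_{\sigma_i}\xi_{\sigma_i}\le\mu_{\sigma_i}\,c\to0$ and the displayed inequality becomes $2c\le o(1)$, already absurd for large $i$.
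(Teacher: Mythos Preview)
The paper does not prove this lemma; it is quoted from \cite{Orabona.2020} and used as a black box in the stochastic-gradient analysis. So there is no in-paper proof to compare against, and I comment only on the soundness of your argument.

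Your $\liminf$ step and the overall crossing strategy are correct. The gap you flag for $p>1$ is real but self-inflicted: it appears only because you apply the hypothesis on the \emph{ascending} segment $[\sigma_i,\tau_i]$, which forces you to isolate the boundary term $L\,\mu_{\sigma_i}\xi_{\sigma_i}$ with $\xi_{\sigma_i}$ small and hence uncontrolled. The remedy is to run the same estimate on the \emph{descending} segment. With $\tau_i\to\infty$ chosen so that $\xi_{\tau_i}>3c$, set $\rho_i:=\min\{s>\tau_i:\xi_s<c\}$, which exists because $\liminf_t\xi_t=0$. Then $\xi_s\ge c$ for every $\tau_i\le s<\rho_i$, so $\mu_s\xi_s\le c^{\,1-p}\mu_s\xi_s^{\,p}$ throughout that range and
\[
2c<\xi_{\tau_i}-\xi_{\rho_i}\;\le\;L\sum_{s=\tau_i}^{\rho_i-1}\mu_s\xi_s+L\,\widetilde\delta_{\tau_i}\;\le\;L\,c^{\,1-p}\sum_{s\ge\tau_i}\mu_s\xi_s^{\,p}+L\,\widetilde\delta_{\tau_i}\;\longrightarrow\;0,
\]
an immediate contradiction with no leftover case, no appeal to $\mu_t\to0$, and no ``delicate bookkeeping''. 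Your observation that in the paper's application the step sizes satisfy $\mu_t\le\text{const}\cdot(1+t)^{-1+\kappa}\to0$ is correct and does salvage that particular use, but the general lemma goes through cleanly once one looks at the downward crossing rather than the upward one.
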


Applying \Cref{l: convergence to 0} for $\xi_t := \norm{\nabla_z \q J(z^t,v^t)}_2 + \norm{\nabla_v \q J(z^t,v^t)}_2$ gives the desired convergence. Note that in \cite{Orabona.2020}, $L$ is the Lipschitz constant of the gradient. However, $\q J$ only has locally Lipschitz continuous gradient as stated by the next lemma.
\begin{lemma}\label{eq: Lipschitz continuity}
Let $\varepsilon > 0$. For arbitrary $z_1,z_2,v_1,v_2 \in \bb C^d$, we have
\begin{align*}
& \norm{ \nabla \q J(z_1,v_1) - \nabla \q J(z_2,v_2) }_2 \\
& \quad \quad \quad \quad \le \sqrt{ 2 L^2(z_1,z_2,v_1,v_2) + 2 \max\{\alpha_T^2, \beta_T^2 \}} [\norm{ z_1 -  z_2 }_2^2 + \norm{ v_1 -  v_2 }_2^2]^{1/2},
\end{align*}
with
\[
L(z_1,z_2,v_1,v_2) := d [ \norm{y/d}_1^{1/2} + \max\{\tfrac{5}{4}, \norm{y + \varepsilon}_{\infty}^{1/2} \varepsilon^{-1/2} - \tfrac{3}{4}\} [\norm{z_1}_2^2 + \norm{z_2}_2^2 + \norm{v_1}_2^2 + \norm{v_2}_2^2 ] ].
\]
\end{lemma}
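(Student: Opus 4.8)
The plan is to reduce the statement to an estimate of the Wirtinger Hessian of $\q L_\varepsilon$ along the segment joining the two points, using the two-sided bound already proved in \Cref{l: Hessian bound}, and to treat the Tikhonov term separately. Write $\q J = \q L_\varepsilon + \q T$ with $\q T(z,v) := \alpha_T\norm{z}_2^2 + \beta_T\norm{v}_2^2$, so $\nabla\q J = \nabla\q L_\varepsilon + (\alpha_T z,\beta_T v)$. The second summand is linear, so its increment between $(z_1,v_1)$ and $(z_2,v_2)$ has norm $(\alpha_T^2\norm{z_1-z_2}_2^2 + \beta_T^2\norm{v_1-v_2}_2^2)^{1/2} \le \sqrt{\max\{\alpha_T^2,\beta_T^2\}}\,[\norm{z_1-z_2}_2^2 + \norm{v_1-v_2}_2^2]^{1/2}$. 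Hence it suffices to prove $\norm{\nabla\q L_\varepsilon(z_1,v_1) - \nabla\q L_\varepsilon(z_2,v_2)}_2 \le L\,[\norm{z_1-z_2}_2^2 + \norm{v_1-v_2}_2^2]^{1/2}$ with $L$ as in the statement; combining this with the bound above and the elementary $a+b \le \sqrt{2a^2+2b^2}$ then yields the claimed factor $\sqrt{2L^2 + 2\max\{\alpha_T^2,\beta_T^2\}}$.

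For $\q L_\varepsilon$: since $\varepsilon > 0$ it is $C^2$, so along $\zeta(t) := (z_2,v_2) + t\,s$ with $s := (z_1-z_2, v_1-v_2)$ the chain rule gives $\nabla_{(z,v)}\q L_\varepsilon(\zeta(1)) - \nabla_{(z,v)}\q L_\varepsilon(\zeta(0)) = \int_0^1\big( \nabla^2_{(z,v),(z,v)}\q L_\varepsilon(\zeta(t))\,s + \nabla^2_{\conj{(z,v)},(z,v)}\q L_\varepsilon(\zeta(t))\,\conj s \big)\,dt =: \int_0^1 \Phi_t(s)\,dt$. The real-linear operator $\Phi_t : s' \mapsto \nabla^2_{(z,v),(z,v)}\q L_\varepsilon(\zeta(t))s' + \nabla^2_{\conj{(z,v)},(z,v)}\q L_\varepsilon(\zeta(t))\conj{s'}$ is self-adjoint for the real inner product $\langle a,b\rangle := \RE(a^*b)$ — this is the block structure $\left[\begin{smallmatrix}\mathcal A & \mathcal B\\ \conj{\mathcal B} & \conj{\mathcal A}\end{smallmatrix}\right]$ of the Wirtinger Hessian with $\mathcal A$ Hermitian and $\mathcal B$ symmetric, already exploited in \Cref{l: Hessian bound}. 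Therefore its $\ell_2$-operator norm satisfies $\norm{\Phi_t} = \tfrac12\sup_{\norm{s'}_2 = 1}\big| \left[\begin{smallmatrix} s'\\ \conj{s'}\end{smallmatrix}\right]^*\nabla^2\q L_\varepsilon(\zeta(t))\left[\begin{smallmatrix}s'\\ \conj{s'}\end{smallmatrix}\right] \big|$, and consequently $\norm{\nabla\q L_\varepsilon(z_1,v_1) - \nabla\q L_\varepsilon(z_2,v_2)}_2 \le \int_0^1\norm{\Phi_t}\,\norm{s}_2\,dt$.

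It remains to show $\norm{\Phi_t}\le L(z_1,z_2,v_1,v_2)$ for all $t\in[0,1]$. Here I would plug the upper and lower bounds of \Cref{l: Hessian bound} into the formula for $\norm{\Phi_t}$, simplify $\sum_j|u^TQ_jv + z^TQ_jh|^2$ and $\sum_j|u^TQ_jh|^2$ using \eqref{eq: bilinear norm bound}, bound $\q L_\varepsilon(\zeta(t))$ using \eqref{eq: L bound}, and use that $t\mapsto\norm{z(t)}_2^2$ and $t\mapsto\norm{v(t)}_2^2$ are convex, so on the segment $\norm{z(t)}_2^2 + \norm{v(t)}_2^2 \le \norm{z_1}_2^2 + \norm{z_2}_2^2 + \norm{v_1}_2^2 + \norm{v_2}_2^2$. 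Balancing the two asymmetric sides — the upper bound producing the constant $\tfrac54$ and the lower bound producing $\norm{y+\varepsilon}_\infty^{1/2}\varepsilon^{-1/2} - \tfrac34$ — gives exactly the constant $L$, which, with the previous two paragraphs, proves the lemma.

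The main obstacle is this last step: converting the quadratic-form estimates of \Cref{l: Hessian bound} into a clean operator-norm bound. Estimating the two Hessian blocks separately (via $\norm{\mathcal A} + \norm{\mathcal B}$) costs a constant factor, so one genuinely needs the self-adjointness observation, which turns the numerical-range bound into an operator-norm bound without loss; after that the constants must be tracked with some care — for instance using the sharp $\sum_j|u^TQ_jv + z^TQ_jh|^2 \le d(\norm{z}_2^2 + \norm{v}_2^2)$ on the unit sphere (from the triangle inequality and Cauchy–Schwarz, not from $|a+b|^2\le 2|a|^2 + 2|b|^2$) — to reach exactly the stated $L$. Finally, $\varepsilon>0$ enters twice: it makes $\q L_\varepsilon$ twice differentiable so the chain rule of the second paragraph is available, and it resurfaces in $L$ through the factor $\varepsilon^{-1/2}$ in the lower Hessian bound, which is precisely why the $\varepsilon=0$ case is excluded.
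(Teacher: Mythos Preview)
Your approach is essentially the paper's: split $\q J=\q L_\varepsilon+\q T$, handle the Tikhonov part by linearity, and for $\q L_\varepsilon$ express the gradient increment as an integral of the Wirtinger Hessian along the segment, invoke the self-adjointness of the associated real-linear map so that operator norm equals the maximum of $\big|\big[\begin{smallmatrix}s\\\conj s\end{smallmatrix}\big]^*\nabla^2\q L_\varepsilon\big[\begin{smallmatrix}s\\\conj s\end{smallmatrix}\big]\big|$ over $\norm{s}_2=1$, and then feed in the two-sided estimate of \Cref{l: Hessian bound} together with \eqref{eq: bilinear norm bound} and \eqref{eq: L bound}. The paper executes exactly this, going through the real Hessian $\nabla^2_{\RE,\IM}\q L_\varepsilon$ and citing \cite{Melnyk.2022} for the spectral-norm identity you call self-adjointness.

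One minor discrepancy in bookkeeping: with your pointwise bound $\norm{z(t)}_2^2+\norm{v(t)}_2^2\le \norm{z_1}_2^2+\norm{z_2}_2^2+\norm{v_1}_2^2+\norm{v_2}_2^2$ combined with your sharp Cauchy--Schwarz estimate on $\sum_j|u^TQ_jv+z^TQ_jh|^2$, you arrive at the coefficient $\max\{\tfrac32,\norm{y+\varepsilon}_\infty^{1/2}\varepsilon^{-1/2}-\tfrac12\}$ rather than the stated $\max\{\tfrac54,\norm{y+\varepsilon}_\infty^{1/2}\varepsilon^{-1/2}-\tfrac34\}$. The paper instead uses the cruder $|a+b|^2\le 2|a|^2+2|b|^2$ on that sum but then \emph{integrates} the convexity inequality $\norm{z(t)}_2^2\le(1-t)\norm{z_1}_2^2+t\norm{z_2}_2^2$ over $[0,1]$ to recover an extra factor $\tfrac12$, yielding $L_\varepsilon+\tfrac14$; bounding $\norm{\Phi_t}$ pointwise as you describe forfeits this averaging. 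Either route gives a valid local Lipschitz bound, and if you do both (your sharp estimate \emph{and} the integration) you in fact beat the paper's constant.
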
   
\begin{proof}
We start by showing the local Lischitz continuity of $\nabla \q L_{\varepsilon}$. 
Denote by $\frac{\partial}{\partial \RE}$ and $\frac{\partial}{\partial \IM}$ standard derivatives with respect to real and imaginary parts of the argument of a function. Moreover, let $\nabla_{\RE,\IM}^2$ be a Hessian of the function with respect to these derivatives. By computations in the proof of Lemma 2.1 on p.17 of \cite{Melnyk.2022} it holds that 
\[
\norm{ \nabla \q L_\varepsilon(z_1,v_1) - \nabla \q L_\varepsilon(z_2,v_2) }_2
= \frac{1}{2} \norm{ 
{
\renewcommand\arraystretch{1.25}
\begin{bmatrix}
(\frac{\partial \q L_\varepsilon}{\partial \RE})^T (z_1,v_1)  \\
(\frac{\partial \q L_\varepsilon}{\partial \IM })^T (z_1,v_1)
\end{bmatrix}
 - 
\begin{bmatrix}
(\frac{\partial \q L_\varepsilon}{\partial \RE})^T (z_2,v_2)  \\
(\frac{\partial \q L_\varepsilon}{\partial \IM })^T (z_2,v_2)
\end{bmatrix}
}
}_2.
\]
Furthermore, by the fundamental theorem of calculus
\begin{align*}
& \norm{
{
\renewcommand\arraystretch{1.25}
\begin{bmatrix}
(\frac{\partial \q L_\varepsilon}{\partial \RE})^T (z_1,v_1)  \\
(\frac{\partial \q L_\varepsilon}{\partial \IM })^T (z_1,v_1)
\end{bmatrix}
 - 
\begin{bmatrix}
(\frac{\partial \q L_\varepsilon}{\partial \RE})^T (z_2,v_2)  \\
(\frac{\partial \q L_\varepsilon}{\partial \IM })^T (z_2,v_2)
\end{bmatrix}
}
}_2 \\
& \quad \quad =
\norm{ \left( \int_0^1 \nabla_{\RE,\IM}^2 \q L_\varepsilon (z_1 + t(z_2 - z_1), v_1 + t(v_2 - v_1) ) dt \right) \begin{bmatrix} \RE(z_2 - z_1) \\ \RE(v_2 - v_1) \\ \IM(z_2 - z_1) \\ \IM(v_2 - v_1) \end{bmatrix} }_2 \\
& \quad \quad \le
\norm{ \left( \int_0^1 \nabla_{\RE,\IM}^2 \q L_\varepsilon ( (1-t) z_1 + t z_2), (1-t) v_1 + t v_2 ) dt \right) }
\norm{\begin{bmatrix} z_2 - z_1 \\ v_2 - v_1 \end{bmatrix} }_2 \\
& \quad \quad \le
\int_0^1 \norm{ \nabla_{\RE,\IM}^2 \q L_\varepsilon ((1-t) z_1 + t z_2, (1-t) v_1 + t v_2 )} dt
\norm{\begin{bmatrix} z_2 - z_1 \\ v_2 - v_1 \end{bmatrix} }_2,
\end{align*} 
where $\norm{ \cdot }$ denotes the spectral norm of the matrix. The calculations on page 16 of \cite{Melnyk.2022} yield the following formula for the spectral norm 
\[
\norm{ \nabla_{\RE,\IM}^2 \q L_\varepsilon (z, v)}
= \max_{s \in \bb C^{2d}, \norm{s}_2 = 1} \left| \begin{bmatrix} s \\ \conj s \end{bmatrix}^*
\nabla^2 \q L_\varepsilon(z,v)
\begin{bmatrix} s \\ \conj s \end{bmatrix} \right|.
\]
Thus, we have
\begin{align}
&\norm{ \nabla \q L_\varepsilon(z_1,v_1) - \nabla \q L_\varepsilon(z_2,v_2) }_2 \label{eq: Lipschitz tech 1}\\
& \quad \quad  = \frac{1}{2} \int_0^1
\max_{s \in \bb C^{2d}, \norm{s}_2 = 1} \left| \begin{bmatrix} s \\ \conj s \end{bmatrix}^*
\nabla^2 \q L_\varepsilon((1-t) z_1 + t z_2, (1-t) v_1 + t v_2)
\begin{bmatrix} s \\ \conj s \end{bmatrix} \right| dt
\norm{\begin{bmatrix} z_2 - z_1 \\ v_2 - v_1 \end{bmatrix} }_2. \nonumber 
\end{align}
Let $s = (u,h) \in \bb C^d$ be arbitrary. Recall that by \Cref{l: Hessian bound}, we have
\[
\left|
\begin{bmatrix} s \\ \conj s \end{bmatrix}^*
\nabla^2 \q L_\varepsilon(z, v)
\begin{bmatrix} s \\ \conj s \end{bmatrix} \right|
\le 2 L_\varepsilon \sum_{j=1}^m |u^T Q_j v + z^T Q_j h |^2 + 4\left[ \q L_\varepsilon(z,v) \right]^{1/2} \cdot \left[ \sum_{j=1}^m |u^T Q_j h|^2 \right]^{1/2},
\]
with $L_\varepsilon := \max\{1, \norm{y + \varepsilon}_{\infty}^{1/2} \varepsilon^{-1/2} - 1\}$. Using \eqref{eq: bilinear norm bound} and \eqref{eq: L bound}, it is further bounded  as
\begin{align*}
& \left| \begin{bmatrix} s \\ \conj s \end{bmatrix}^*
\nabla^2 \q L_\varepsilon(z, v)
\begin{bmatrix} s \\ \conj s \end{bmatrix} \right|
\le 4 L_\varepsilon  \sum_{j=1}^m [ |u^T Q_j v|^2 + |z^T Q_j h |^2] + 4\left[ \q L_\varepsilon(z,v) \right]^{1/2} \cdot \left[ \sum_{j=1}^m |u^T Q_j h|^2 \right]^{1/2} \\
& \quad \quad \quad \quad \le 4 d L_\varepsilon [ \norm{u}_2^2 \norm{v}_2^2 + \norm{z}_2^2 \norm{h}_2^2] + 4\left[ d \norm{z}_2^2 \norm{v}_2^2 + \norm{y}_1 \right]^{1/2} d^{1/2} \norm{u}_2 \norm{h}_2 \\
& \quad \quad \quad \quad \le 4 d L_\varepsilon [ \norm{u}_2^2 + \norm{h}_2^2] \cdot[\norm{z}_2^2 + \norm{v}_2^2 ] + 2 d \left[ \norm{z}_2 \norm{v}_2 + \norm{y/d}_1^{1/2} \right] \cdot [\norm{u}_2^2 + \norm{h}_2^2] \\
& \quad \quad \quad \quad \le d \left[ (4 L_\varepsilon + 1) [\norm{z}_2^2 + \norm{v}_2^2 ] + 2 \norm{y/d}_1^{1/2} \right] \cdot \norm{s}_2^2.
\end{align*}
Therefore, we obtain 
\[
\max_{s \in \bb C^{2d}, \norm{s}_2 = 1}
\left|
\begin{bmatrix} s \\ \conj s \end{bmatrix}^*
\nabla^2 \q L_\varepsilon(z, v)
\begin{bmatrix} s \\ \conj s \end{bmatrix}
\right| \le d \left[(4 L_\varepsilon + 1) [\norm{z}_2^2 + \norm{v}_2^2 ] + 2 \norm{y/d}_1^{1/2} \right].
\]
Next, we substitute $z = (1-t)z_1 + t z_2$ and $v = (1-t)v_1 + t v_2$, apply convexity of $\norm{\cdot}_2^2$ and integrate for $t \in [0,1]$, 
\begin{align*}
& \int_0^1 d [(4 L_\varepsilon + 1) [\norm{(1- t) z_1 + t z_2 }_2^2 + \norm{(1-t) v_1 + t v_2}_2^2 ] + 2 \norm{y/d}_1^{1/2} ] dt \\
& \quad \le \int_0^1 d [(4 L_\varepsilon + 1) [(1- t) \norm{z_1}_2^2 + t \norm{z_2}_2^2 + (1-t) \norm{v_1}_2^2 + t \norm{v_2}_2^2 ] + 2 \norm{y/d}_1^{1/2} ] dt \\
& \quad = d [ \tfrac{1}{2} (4 L_\varepsilon + 1) [\norm{z_1}_2^2 + \norm{z_2}_2^2 + \norm{v_1}_2^2 + \norm{v_2}_2^2 ] + 2 \norm{y/d}_1^{1/2} ].
\end{align*}
By combining the above inequalities with \eqref{eq: Lipschitz tech 1}, we obtain
\begin{align*}
&
\norm{ \nabla \q L_\varepsilon(z_1,v_1) - \nabla \q L_\varepsilon(z_2,v_2) }_2 \\
& \quad  \le d \left[  (L_\varepsilon + \tfrac{1}{4}) [\norm{z_1}_2^2 + \norm{z_2}_2^2 + \norm{v_1}_2^2 + \norm{v_2}_2^2 ] + \norm{y/d}_1^{1/2} \right]
\norm{\begin{bmatrix} z_2 - z_1 \\ v_2 - v_1 \end{bmatrix} }_2,
\end{align*}
with the coefficient being precisely $L(z_1,z_2,v_1,v_2)$ defined in the statement of the lemma. Turning to $\q J$, we have
\begin{align*}
\norm{ \nabla_z \q J(z_1,v_1) - \nabla_z \q J(z_2,v_2) }_2^2
& = \norm{ \nabla_z \q L_\varepsilon(z_1,v_1)  + \alpha_T z_1 - \nabla \q L_\varepsilon(z_2,v_2) - \alpha_T z_2 }_2^2 \\
& \le 2 \norm{ \nabla_z \q L_\varepsilon(z_1,v_1)  - \nabla_z \q L_\varepsilon(z_2,v_2)}_2^2 + 
2\alpha_T^2 \norm{ z_1 -  z_2 }_2^2.
\end{align*}
and, analogously,
\[
\norm{ \nabla_v \q J(z_1,v_1) - \nabla_v \q J(z_2,v_2) }_2^2
\le 2 \norm{ \nabla_v \q L_\varepsilon(z_1,v_1)  - \nabla_v \q L_\varepsilon(z_2,v_2)}_2^2 +  2\beta_T^2 \norm{ v_1 -  v_2 }_2^2.
\]
Combined, it gives
\begin{align*}
& \norm{ \nabla \q J(z_1,v_1) - \nabla \q J(z_2,v_2) }_2^2 \\
& \quad \quad \quad \quad \le  2 \norm{ \nabla \q L_\varepsilon(z_1,v_1)  - \nabla \q L_\varepsilon(z_2,v_2)}_2^2 + 2\alpha_T^2 \norm{ z_1 -  z_2 }_2^2 +  2\beta_T^2 \norm{ v_1 -  v_2 }_2^2 \\
& \quad \quad \quad \quad \le (2 L^2(z_1,z_2,v_1,v_2) + 2 \max\{\alpha_T^2, \beta_T^2 \}) [\norm{ z_1 -  z_2 }_2^2 + \norm{ v_1 -  v_2 }_2^2]. 
\end{align*}
\end{proof}

However, despite the gradient being Lipschitz continuous only locally, we are still able to show that the gradients vanish. 

\begin{lemma}\label{l: convergence stochastic third}
Let $\varepsilon, \alpha_T, \beta_T > 0$, $0 < \theta < 1$, $0 \le \kappa < \theta/(1 + \theta)$. Consider the step sizes $\mu_t = \mu  \cdot \mu_t^{\max}$ and $\nu_t = \nu \cdot \mu_t^{\max}$ for some $0 < \mu,\nu \le 1$ and $\mu_t^{\max}$ as in \eqref{eq: mu max}. Then, we have
\[
\lim_{t \to \infty} \norm{\nabla \q J(z^t,v^t)}_2^2  = 0 \quad \text{a.s.}
\]
\end{lemma}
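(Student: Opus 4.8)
The plan is to apply \Cref{l: convergence to 0} with
$\xi_t := \norm{\nabla_z \q J(z^t,v^t)}_2 + \norm{\nabla_v \q J(z^t,v^t)}_2$,
weights $\mu_t^{\max}$ from \eqref{eq: mu max}, exponent $p = 2$, and the stochastic noise
$q^t := \bigl(\mu(g_z(z^t,v^t) - \nabla_z \q J(z^t,v^t)),\, \nu(g_v(z^t,v^t) - \nabla_v \q J(z^t,v^t))\bigr) \in \bb C^{2d}$. Since $\xi_t \to 0$ is equivalent to $\norm{\nabla \q J(z^t,v^t)}_2 \to 0$, and hence to $\norm{\nabla \q J(z^t,v^t)}_2^2 \to 0$, this would suffice. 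We work throughout on the almost sure event on which $\{\q J(z^t,v^t)\}_{t \ge 0}$ converges, which has probability one by \Cref{l: convergence stochastic first}; on this event $\q J^{\sup} := \sup_{t} \q J(z^t,v^t) < \infty$ and, as in \eqref{eq: norm bounded stochastic}, $\norm{z^t}_2^2 \le \alpha_T^{-1} \q J^{\sup}$ and $\norm{v^t}_2^2 \le \beta_T^{-1}\q J^{\sup}$, so all iterates stay in a fixed (random but a.s.\ bounded) ball.

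First I would check the three hypotheses on the sequences. Summability: since $\xi_t^2 \le 2(\norm{\nabla_z \q J(z^t,v^t)}_2^2 + \norm{\nabla_v \q J(z^t,v^t)}_2^2)$ and $\mu_t = \mu\mu_t^{\max}$, $\nu_t = \nu\mu_t^{\max}$, we get $\sum_{t \ge 0}\mu_t^{\max}\xi_t^2 \le \tfrac{2}{\min\{\mu,\nu\}}\sum_{t \ge 0}\bigl(\mu_t\norm{\nabla_z \q J(z^t,v^t)}_2^2 + \nu_t\norm{\nabla_v \q J(z^t,v^t)}_2^2\bigr) < \infty$ a.s.\ by \Cref{l: convergence stochastic first}. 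Divergence: $\sum_{t \ge 0}\mu_t^{\max} = \infty$ a.s.\ is \eqref{eq: mu diverges} together with $\kappa \ge 0$. For $\norm{\sum_{t \ge 0}\mu_t^{\max}q^t}_2 < \infty$ a.s.\ I would use a martingale argument: $M_T := \sum_{t = 0}^{T-1}\mu_t^{\max}q^t$ is a martingale with respect to $\{\q F_t\}_{t \ge 0}$, because $\mu_t^{\max}$ is $\q F_t$-measurable and $\bb E[g(z^t,v^t)\,|\,\q F_t] = \nabla \q J(z^t,v^t)$ by \Cref{l: stochastic gradient properties 2}; by the $L^2$ martingale convergence criterion it converges a.s.\ provided $\sum_{t \ge 0}(\mu_t^{\max})^2\bb E[\norm{q^t}_2^2\,|\,\q F_t] < \infty$ a.s. The conditional second moment is bounded, via \Cref{l: stochastic gradient properties 2} and the boundedness of $\q J(z^t,v^t)$, $\norm{z^t}_2$, $\norm{v^t}_2$, by an a.s.\ finite quantity, while $(\mu_t^{\max})^2 \le (1+t)^{-2+2\kappa}B^{-2/(1-\theta)}(z^t,v^t) \le C(1+t)^{-2+2\kappa}$ with $C$ a.s.\ finite, and $\sum_{t \ge 0}(1+t)^{-2+2\kappa} < \infty$ because $\kappa < \theta/(1+\theta) < \tfrac12$.

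It remains to verify the Lipschitz-type inequality of \Cref{l: convergence to 0}. Telescoping \eqref{eq: stochastic gradient descent} gives $(z^{t+k}-z^t,\, v^{t+k}-v^t) = -\sum_{s=t}^{t+k-1}\mu_s^{\max}\bigl(\mu\, g_z(z^s,v^s),\, \nu\, g_v(z^s,v^s)\bigr)$, and since $\bigl(\mu\, g_z(z^s,v^s),\, \nu\, g_v(z^s,v^s)\bigr) = \bigl(\mu\nabla_z\q J(z^s,v^s),\, \nu\nabla_v\q J(z^s,v^s)\bigr) + q^s$ with $\norm{\bigl(\mu\nabla_z\q J(z^s,v^s),\, \nu\nabla_v\q J(z^s,v^s)\bigr)}_2 \le \max\{\mu,\nu\}\,\xi_s \le \xi_s$, the triangle inequality yields $\norm{(z^{t+k}-z^t,\, v^{t+k}-v^t)}_2 \le \sum_{s=t}^{t+k-1}\mu_s^{\max}\xi_s + \norm{\sum_{s=t}^{t+k-1}\mu_s^{\max}q^s}_2$. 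On the other hand $|\xi_{t+k}-\xi_t| \le \norm{\nabla_z\q J(z^{t+k},v^{t+k}) - \nabla_z\q J(z^t,v^t)}_2 + \norm{\nabla_v\q J(z^{t+k},v^{t+k}) - \nabla_v\q J(z^t,v^t)}_2 \le \sqrt2\,\norm{\nabla\q J(z^{t+k},v^{t+k}) - \nabla\q J(z^t,v^t)}_2$, and because all iterates lie in the bounded ball, \Cref{eq: Lipschitz continuity} bounds the right-hand side by $\tilde L\,\norm{(z^{t+k}-z^t,\, v^{t+k}-v^t)}_2$ for a single a.s.\ finite constant $\tilde L$ depending only on the ball, $d$, $y$, $\varepsilon$, $\alpha_T$, $\beta_T$. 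Combining the two estimates gives the hypothesis of \Cref{l: convergence to 0} with $L = \tilde L$, so $\xi_t \to 0$ and hence $\norm{\nabla\q J(z^t,v^t)}_2^2 \to 0$ a.s. The main obstacle is exactly the local nature of the Lipschitz constant in \Cref{eq: Lipschitz continuity}: one must first restrict to the a.s.\ event on which the iterates are bounded, so that a single random Lipschitz constant controls every pair $(z^{t+k},v^{t+k})$, $(z^t,v^t)$ uniformly in $t,k$; the same boundedness, together with the strengthened range $\kappa < \theta/(1+\theta)$ forcing $\kappa < 1/2$, is what makes the martingale increments $(\mu_t^{\max})^2q^t$ square-summable.
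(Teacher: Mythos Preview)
Your proposal is correct and follows essentially the same route as the paper: the same choice of $\xi_t$, $q^t$, exponent $p=2$, the same use of \Cref{l: convergence stochastic first} for summability and of \eqref{eq: mu diverges} for divergence, and the same reduction of the Lipschitz-type hypothesis of \Cref{l: convergence to 0} to \Cref{eq: Lipschitz continuity} on the a.s.\ bounded orbit.

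The only noticeable difference is in the martingale step. The paper bounds $\bb E\norm{M_t}_2^2$ via $\mu_t^2 \le \mu_{\sup}^{1-\theta}\mu_t^{1+\theta}$ and the summable sequence $\eta_t$ from \eqref{eq: alpha mu bound}, then invokes coordinatewise $L^1$-bounded martingale convergence. You instead bound the predictable quadratic variation pathwise, using $(\mu_t^{\max})^2 \le C(1+t)^{-2+2\kappa}$ (with $C$ a.s.\ finite thanks to the lower bound on $B(z^t,v^t)$) together with the observation that $\kappa < \theta/(1+\theta) < \tfrac12$ forces $2-2\kappa>1$. Your version is a bit more direct and, since it only needs the \emph{almost sure} boundedness of the conditional second moments rather than $\bb E\,\q J^{\sup} < \infty$, it sidesteps a point the paper glosses over. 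Just note that the result you are actually invoking is ``$M_t$ converges a.s.\ on $\{\langle M\rangle_\infty < \infty\}$'' rather than the $L^2$ martingale convergence theorem per se; the conclusion is the same.
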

\begin{proof}
Let us start by establishing the inequality in \Cref{l: convergence to 0} for $\xi_t := \norm{\nabla_z \q J(z^t,v^t)}_2 + \norm{\nabla_v \q J(z^t,v^t)}_2$. For any $t \ge 0$ and $k > 0$, by the reverse- and triangle inequalities and \Cref{eq: Lipschitz continuity}, we have
\begin{align*}
& \left| \, \norm{ \nabla_z \q J(z^{t+k},v^{t+k}) }_2 + \norm{ \nabla_v \q J(z^{t+k},v^{t+k}) }_2 -  \norm{ \nabla_z \q J(z^t,v^t) }_2 -  \norm{ \nabla_v \q J(z^t,v^t) }_2 \, \right| \\
&\quad \quad \quad \quad  \le \norm{ \nabla_z \q J(z^{t+k},v^{t+k}) - \nabla_z \q J(z^t,v^t) }_2 + \norm{ \nabla_v \q J(z^{t+k},v^{t+k}) - \nabla_v \q J(z^t,v^t) }_2 \\
& \quad \quad \quad \quad \le \sqrt 2 \norm{ \nabla \q J(z^{t+k},v^{t+k}) - \nabla \q J(z^t,v^t) }_2  \\
& \quad \quad \quad \quad \le 2 \sqrt{ L^2(z^{t+k},z^t,v^{t+k},v^t) + \max\{\alpha_T^2, \beta_T^2 \}} [\norm{ z^{t+k} -  z^t }_2^2 + \norm{ v^{t+k} -  v^t }_2^2]^{1/2} \\
& \quad \quad \quad \quad \le 2 \sqrt{ L^2(z^{t+k},z^t,v^{t+k},v^t) + \max\{\alpha_T^2, \beta_T^2 \}} [\norm{ z^{t+k} -  z^t }_2 + \norm{ v^{t+k} -  v^t }_2].
\end{align*}
Next, we show that the square root is bounded from above for all $t$ and $k$. Since the assumptions of \Cref{l: convergence stochastic first} are satisfied there exists a random variable $\q J^{\sup} < +\infty$ a.s. and by the arguments in the proof of \Cref{l: convergence stochastic second} (see \eqref{eq: norm bounded stochastic}) we have 
\[
\norm{z^t}_2^2 \le \alpha_T^{-1} \q J^{\sup} 
\quad \text{and} \quad
\norm{v^t}_2^2 \le \beta_T^{-1} \q J^{\sup}
\quad \text{for all } t. 
\]
Consequently, the constant $L(z^{t+k},z^t,v^{t+k},v^t)$ from \Cref{eq: Lipschitz continuity} is bounded by
\[
L(z^{t+k},z^t,v^{t+k},v^t) \le d [ 2 \q J^{\sup} \max\{\tfrac{5}{4}, \norm{y + \varepsilon}_{\infty}^{1/2} \varepsilon^{-1/2} - \tfrac{3}{4}\} [\alpha_T^{-1} + \beta_T^{-1}] + \norm{y/d}_1^{1/2} ].
\]
Let us set $L^{\sup} > 0$ as
\[
(L^{\sup})^2 := 4d^2 [ 2 \q J^{\sup} \max\{\tfrac{5}{4}, \norm{y + \varepsilon}_{\infty}^{1/2} \varepsilon^{-1/2} - \tfrac{3}{4}\} [\alpha_T^{-1} + \beta_T^{-1}] + \norm{y/d}_1^{1/2} ]^2 + 4 \max\{\alpha_T^2, \beta_T^2 \},
\]
which leads to
\[
\left| \norm{ \nabla \q J(z^{t+k},v^{t+k}) }_2 -  \norm{ \nabla \q J(z^t,v^t) }_2 \right|
\le L^{\sup} \left[ \norm{ z^{t+k} -  z^t }_2 + \norm{ v^{t+k} -  v^t }_2 \right].
\]
Furthermore, by construction
\begin{align*}
\norm{z^{t+k} -  z^t }_2
& = \norm{ \sum_{s = t}^{t+k-1} z^{s+1} - z^{s} }_2
=  \norm{ \sum_{s = t}^{t+k-1} \mu_s  g_z(z^s,v^s) }_2 \\
& \le \sum_{s = t}^{t+k-1} \mu_s \norm{ \nabla_z \q J(z^s,v^s) }_2 
+ \norm{ \sum_{s = t}^{t+k-1} \mu_s \left[ g_z(z^s,v^s) -  \nabla_z \q J(z^s,v^s) \right] }_2,
\end{align*}
and, analogously,
\[
\norm{v^{t+k} -  v^t }_2
\le \sum_{s = t}^{t+k-1} \nu_s \norm{ \nabla_v \q J(z^s,v^s) }_2 
+\norm{ \sum_{s = t}^{t+k-1} \nu_s \left[ g_v(z^s,v^s) -  \nabla_v \q J(z^s,v^s) \right] }_2.
\]
Define $q_s$, $s \ge 0$, for \Cref{l: convergence to 0} as 
\begin{equation}\label{eq: martingale summand}
q_s := \begin{bmatrix}
\mu g_z(z^s,v^s) -  \mu \nabla_z \q J(z^s,v^s)  \\ 
\nu  g_v(z^s,v^s) -  \nu \nabla_v \q J(z^s,v^s) 
\end{bmatrix} \in \bb C^{2d}.
\end{equation}
The corresponding norm in \Cref{l: convergence to 0} is the Euclidean norm and by $(a +b)^2 \le 2 a^2 + 2 b^2$ we have
\begin{align*}
& \norm{ \sum_{s = t}^{t+k-1} \mu_s \left[ g_z(z^s,v^s) -  \nabla_z \q J(z^s,v^s) \right] }_2 \\
& \quad + \norm{ \sum_{s = t}^{t+k-1} \nu_s \left[ g_v(z^s,v^s) -  \nabla_v \q J(z^s,v^s) \right] }_2
\le \sqrt 2 \norm{\sum_{s = t}^{t+k-1} \mu^{\max}_s q_s }_2.
\end{align*}
Now, we combine the last few steps together and use that $\mu_t = \mu \cdot \mu^{\max}_t$, $\nu_t = \nu \cdot \mu^{\max}_t$ and $\mu, \nu \le 1$,
\begin{align*}
\left| \xi_{t+k} -  \xi_t \right|
& \le  L^{\sup} \sum_{s = t}^{t+k-1}\left[\mu_s \norm{ \nabla_z \q J(z^s,v^s) }_2  + \nu_s \norm{ \nabla_v \q J(z^s,v^s) }_2 \right] + \sqrt{2} L^{\sup} \sum_{s = t}^{t+k-1} \mu^{\max}_s q_s \\  
& \le \sqrt{2} L^{\sup} \sum_{s = t}^{t+k-1} \mu^{\max}_s \xi_s  + L^{\sup} \left| \sum_{s = t}^{t+k-1} \mu^{\max}_s q_s \right|.
\end{align*}
Let us now show that the rest of the conditions in \Cref{l: convergence to 0} hold for the constructed sequences. For the first condition with $p=2$, by \Cref{l: convergence stochastic first}, we get
\begin{align*}
\sum_{t \ge 0} \mu^{\max}_t \xi_t^2
& = \sum_{t \ge 0} \mu^{\max}_t \left( \norm{ \nabla_z \q J(z^t,v^t) }_2  + \norm{ \nabla_v \q J(z^t,v^t) }_2 \right)^2\\
& \le \frac{2}{\min\{\mu,\nu\}} \sum_{t \ge 0} \mu_t \norm{ \nabla_z \q J(z^t,v^t) }_2^2  + \nu_t \norm{ \nabla_v \q J(z^t,v^t) }_2^2 < \infty \quad \text{a.s.}
\end{align*}
The series $\sum_{t \ge 0} \mu_t^{\max}$ diverges a.s. by the comparison test and the lower bound \eqref{eq: mu diverges}. For the last condition, we consider a sequence
\[
M_0 := 0, \quad M_t := \sum_{s=0}^{t-1} \mu_s^{\max} q_s, \quad t \ge 1.
\] 
By construction, it is adapted to the filtration $\{\q F_t\}$ and by construction of $\mu_t^{\max}$ and \Cref{l: stochastic gradient properties 2} admits 
\[
\bb E[\, M_{t+1} \, | \, \q F_t \,] 
= M_{t} + \mu_t^{\max}
\begin{bmatrix}
\mu \bb E[\, g_z(z^s,v^s) \, | \, \q F_t \,]  -  \mu \nabla_z \q J(z^s,v^s)  \\ 
\nu \bb E[\,  g_v(z^s,v^s)\, | \, \q F_t \,]  -  \nu \nabla_v \q J(z^s,v^s) 
\end{bmatrix}
= M_{t}.
\]
Hence, $M_t$ is marginal. Its increments $\mu_t^{\max} q_t$ satisfy
\begin{align*}
& \bb E[\, \norm{\mu_t^{\max} q_t}_2^2 \, | \, \q F_t \,] \\
& \quad \quad = \bb E[\, \mu_t^2 \norm{ g_z(z^t,v^t) -  \nabla_z \q J(z^t,v^t) }_2^2 + \nu_t^2 \norm{ g_v(z^t,v^t) -  \nabla_v \q J(z^t,v^t) }_2^2 \, | \, \q F_t \,] \\
& \quad \quad = \mu_t^2 \bb E[\,  \norm{ g_z(z^t,v^t) -  \nabla_z \q J(z^t,v^t) }_2^2 \, | \, \q F_t \,] + \nu_t^2 \bb E[\, \norm{ g_v(z^t,v^t) -  \nabla_v \q J(z^t,v^t) }_2^2 \, | \, \q F_t \,].
\end{align*}
Let us expand the first expectation. By \Cref{l: stochastic gradient properties 2}, we get  
\begin{align*}
\bb E[\,  \norm{ g_z(z^t,v^t) -  \nabla_z \q J(z^t,v^t) }_2^2 \, | \, \q F_t \,] 
& = \bb E[\,  \norm{ g_z(z^t,v^t)}_2^2 \, | \, \q F_t \,] -  \norm{\nabla_z \q J(z^t,v^t) }_2^2 \\
& \le \gamma_t^z [\q J(z^t, v^t) - \q J^{\inf}] - K^{-1} \norm{\nabla_z \q J(z^t, v^t)}_2^2 + \delta_t^z,
\end{align*}
and the second expectation is analogous. Thus, the tower property yields
\begin{align*}
\bb E \norm{ \mu_t^{\max} q_t}_2^2 
& = \bb E [ \, \bb E[\, \norm{ \mu_t^{\max} q_t }_2^2 \, | \, \q F_t \,] \, ]  
\le 2 \bb E [ (\mu_t^2 \gamma_t^z + \nu_t^2 \gamma_t^v ) [\q J(z^t, v^t) - \q J^{\inf}] \\
& \quad - K^{-1} [\mu_t^2 \norm{\nabla_z \q J(z^t, v^t)}_2^2 + \nu_t^2 \norm{\nabla_v \q J(z^t, v^t)}_2^2 ] + (\mu_t^2 \delta_t^z + \nu_t^2 \delta_t^v)].
\end{align*}
Consequently, by \cite[Problem 13.27]{Athreya.2006}, the expected squared norm of $M_t$ is bounded by
\begin{align}
\bb E \norm{M_t}_2^2  
& = \sum_{s=0}^{t-1} \bb E \norm{\mu_s^{\max} q_s}_2^2 
\le 2 \sum_{s=0}^{t-1}\bb E \left[ (\mu_t^2 \gamma_t^z + \nu_t^2 \gamma_t^v ) [\q J(z^t, v^t) - \q J^{\inf} ] \right] \label{eq: Mt expected squared norm}\\
& - 2 K^{-1} \sum_{s=0}^{t-1}\bb E \left[ \mu_t^2 \norm{\nabla_z \q J(z^t, v^t)}_2^2 + \nu_t^2 \norm{\nabla_v \q J(z^t, v^t)}_2^2 \right]
+ 2  \sum_{s=0}^{t-1}\bb E[ \mu_t^2 \delta_t + \nu_t^2 \rho_t]. \nonumber 
\end{align} 
We recall that by construction and \eqref{eq: mu first part bound}, 
\[
\mu_t, \nu_t 
\le \mu_t^{\max} 
\le  [3 \sqrt d \norm{y}_1^{1/2}  + 3\max\{\alpha_T,\beta_T\}]^{- \frac{1}{1-\theta}} =: \mu_{\sup}.
\]
Also, recall the definition \eqref{eq: alpha mu bound} of $\eta_t$ from the proof of \Cref{l: convergence stochastic first}. Then, by the inequality \eqref{eq: alpha mu bound} we have
\begin{align*}
\sum_{s = 0}^{t-1}\bb E \left[ (\mu_t^2 \gamma_t^z + \nu_t^2 \gamma_t^v ) [\q J(z^t, v^t) - \q J^{\inf} ] \right]
& \le \sum_{s = 0}^{t-1} \bb E \left[ \mu_{\sup}^{1 - \theta} (\mu_t^{1+\theta} \gamma_t^z + \nu_t^{1+\theta} \gamma_t^v ) [\q J^{\sup} - \q J^{\inf}] \right] \\
& \le \mu_{\sup}^{1 - \theta} [\bb E \q J^{\sup} - \q J^{\inf}] \sum_{s = 0}^{t-1} \eta_s 
\end{align*} 
and, similarly, by \eqref{eq: delta mu bound}, the inequality
\begin{align*}
\sum_{s = 0}^{t-1} \bb E [ \mu_t^2 \delta_t + \nu_t^2 \rho_t] 
& \le \mu_{\sup}^{1 - \theta} [ \q J^{\inf} - \sum_{r \in \q R} \inf_{z,v \in \bb C^d} \q J_{r}(z,v)] \sum_{s = 0}^{t-1} \eta_s,
\end{align*} 
holds. The second summand in \eqref{eq: Mt expected squared norm} can be bounded from above by zero. This gives
\[
\bb E \norm{M_t}_2^2  \le 2 \mu_{\sup}^{1 - \theta}[\bb E \q J^{\sup} - \sum_{r \in \q R} \inf_{z,v \in \bb C^d} \q J_{r}(z,v)] \sum_{s = 0}^{t-1} \eta_s.
\]
Since $\q J^{\sup} < +\infty$ a.s. and the series $\sum_{s \ge 0} \eta_s$ is convergent, $\bb E \norm{M_t}_2^2 < \infty$. By the Jensen's inequality \cite[Proposition 6.2.6]{Athreya.2006}, each of its coordinates $(M_t)_j$, $j =1, \ldots, d$, admits 
\[
\sup_{t \ge 0} \bb E[ \max\{(M_t)_j , 0\}]
\le \sup_{t \ge 0} \bb E |(M_t)_j |
\le \sup_{t \ge 0} \bb E \norm{M_t}_2
\le \sup_{t \ge 0} \sqrt{\bb E \norm{M_t}_2^2} 
< \infty.
\]
Thus, by \cite[Theorem 13.3.2]{Athreya.2006}, there exists a random vector $M$ such that $\bb E |M_j| < \infty$ for all $j =1, \ldots, d$, which implies $M_j < \infty$ almost surely. Therefore, we obtain the last condition of \Cref{l: convergence to 0},
\[
\norm{ \sum_{s=0}^\infty \mu_s q_s }_2
= \norm{ \lim_{t \to \infty} M_t }_2 
= \norm{M}_2 
= \left[ \sum_{j =1}^d |M_j|^2 \right]^{1/2} < \infty \quad \text{a.s}.
\]
Hence, by \Cref{l: convergence to 0}, $\lim_{t \to \infty} \xi_t = 0$ a.s. This gives 
\begin{align*}
0 & \le \lim_{t \to \infty} \norm{\nabla_z \q J(z^t,v^t)}_2^2 + \norm{\nabla_v \q J(z^t,v^t)}_2^2 \\
& \le \lim_{t \to \infty} [\norm{\nabla_z \q J(z^t,v^t)}_2 + \norm{\nabla_v \q J(z^t,v^t)}_2]^2 
= \lim_{t \to \infty} \xi_t^2
= 0.
\end{align*} 
\end{proof}

\begin{remark}
If indices $r^{t,k}$, $k=1,\ldots K$ are sampled from some other sampling scheme with $\rho >1$ in \Cref{l: stochastic gradient properties 2}, we can no longer discard the second sum in \eqref{eq: Mt expected squared norm}. Instead, consider the sequence $H_t := \sum_{s=0}^{t-1}\mu_s \norm{\nabla_z \q J(z^s, v^s)}_2^2 + \nu_t \norm{\nabla_v \q J(z^s, v^s)}_2^2$.
Then, the second term in \eqref{eq: Mt expected squared norm} is bounded by
\begin{align*}
\sum_{s \ge 0}\bb E[ \mu_t^2 \norm{\nabla_z \q J(z^t, v^t)}_2^2 + \nu_t^2 \norm{\nabla_v \q J(z^t, v^t)}_2^2]
\le \mu_{\sup} \lim_{t \to \infty} \bb E[H_t ].
\end{align*} 
The sequence $H_t$ is nonnegative, nondecreasing and by \Cref{l: convergence stochastic first} converges to a finite value
$\sum_{s \ge 0} \mu_t \norm{\nabla_z \q J(z^t, v^t)}_2^2 + \nu_t \norm{\nabla_v \q J(z^t, v^t)}_2^2$ a.s. Therefore, the monotone convergence theorem \cite[Theorem 2.3.4]{Athreya.2006} states that there exists a limit 
\[
\lim_{t \to \infty} \bb E H_t = \bb E\left[ \sum_{s \ge 0} \mu_t \norm{\nabla_z \q J(z^t, v^t)}_2^2 + \nu_t \norm{\nabla_v \q J(z^t, v^t)}_2^2 \right] =: \bb E H_\infty,
\]
and $\bb E[ H_\infty ] < \infty$. Consequently, $\bb E \norm{M_t}_2^2$ is finite and the rest of the proof of \Cref{l: convergence stochastic third} remains the same. 
\end{remark} 

Finally, we summarize all results together.
\begin{proof}[Proof of \Cref{thm: convergence stochastic gradient}.]
Each of three claims follow from \Cref{l: convergence stochastic first,l: convergence stochastic second,l: convergence stochastic third} respectively.
\end{proof}

\section{Conclusions}

In this paper, we analyzed the convergence of gradient descent and stochastic gradient descent with the motivation to understand the performance of extended Ptychographic Iterative Engine \cite{Maiden.2009}. While we were able to derive its first convergence guarantees, there are still a lot of open questions left to answer. For instance, the common practical scenario is when $\alpha_t, \beta_t$ are constant. This somewhat links ePIE to stochastic gradient descent with constant step sizes, which current state-of-the-art optimization literature is not able to explain without additional assumptions on the objective $\q J$ \cite{Ding.2019, Dieuleveut.2020,Yu.2021}. Considering the discussion in \Cref{sec: larger step size} we naturally ask ourselves if it is possible to find larger steps sizes for gradient methods without involving too many additional computations. Another interesting direction of future research is to consider regularized PIE (rPIE) \cite{Maiden.2017} and extend our analysis for it. 

\section*{Acknowledgment}
This work was funded by the Helmholtz Association under contracts No.~ZT-I-0025 (Ptychography 4.0), No.~ZT-I-PF-4-018 (AsoftXm), No.~ZT-I-PF-5-28 (EDARTI), No.~ZT-I-PF-4-024 (BRLEMMM). 

\bibliography{ms.bbl} 

\end{document}